\title{Minimal model theory for relatively trivial log canonical pairs}
\author{Kenta Hashizume}
\date{2017/11/20, version 0.08}
\keywords{good minimal model, Mori fiber space, log canonical pair, relatively trivial log canonical divisor}
\subjclass[2010]{14E30}
\address{Department of Mathematics, Graduate School of Science, 
Kyoto University, Kyoto 606-8502, Japan}
\email{hkenta@math.kyoto-u.ac.jp}
\newtheorem{theo}{Theorem}[section]
\newtheorem{lemm}[theo]{Lemma}
\newtheorem{coro}[theo]{Corollary}
\newtheorem{prop}[theo]{Proposition}
\newtheorem{conj}[theo]{Conjecture}
\theoremstyle{definition}
\newtheorem{defi}[theo]{Definition}
\newtheorem{rema}[theo]{Remark}
\newtheorem*{ack}{Acknowledgments} 
\newtheorem*{divisor}{Divisors} 
\newtheorem*{sing}{Singularities of pairs} 
\newtheorem{step}{Step}
\newtheorem{step2}{Step}
\newtheorem{step3}{Step}
\newtheorem{case}{Case}
\begin{document}

\maketitle
\begin{abstract}
We study relative log canonical pairs with relatively trivial log canonical divisors. 
We fix such a pair $(X,\Delta)/Z$ and establish the minimal model theory for the pair $(X,\Delta)$ assuming the minimal model theory for all Kawamata log terminal pairs whose dimension is not greater than ${\rm dim}\,Z$. 
We also show the finite generation of log canonical rings for log canonical pairs of dimension five which are not of log general type.
\end{abstract}

\tableofcontents

\section{Introduction}\label{sec1}
Throughout this paper we work over $\mathbb{C}$, the complex number field. 

In the minimal model theory for higher-dimensional algebraic varieties, one of the most important problems is the existence of a good minimal model or a Mori fiber space for log pairs. 
In this paper we only deal with the case when the boundary divisor is a $\mathbb{Q}$-divisor.

\begin{conj}\label{conjminimalmodel}
Let $(X,\Delta)$ be a projective log canonical pair such that $\Delta$ is a $\mathbb{Q}$-divisor. 
If $K_{X}+\Delta$ is pseudo-effective then $(X,\Delta)$ has a good minimal model, and if $K_{X}+\Delta$ is not pseudo-effective then $(X,\Delta)$ has a Mori fiber space. 
\end{conj}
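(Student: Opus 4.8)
The plan is to run the minimal model program with scaling of an ample divisor, after the usual reductions to better singularities and to lower dimension; one should be candid, though, that Conjecture~\ref{conjminimalmodel} in full generality is a famous open problem, so what follows is really a map of the standard reductions together with an identification of the two genuine obstacles.

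First I would reduce to the case where $(X,\Delta)$ is $\mathbb{Q}$-factorial divisorial log terminal. Passing to a dlt modification $f\colon(Y,\Delta_Y)\to(X,\Delta)$ --- which exists, with $\Delta_Y$ the sum of the strict transform of $\Delta$ and all $f$-exceptional prime divisors --- one has $K_Y+\Delta_Y=f^{*}(K_X+\Delta)$, so $f$ is crepant; hence $K_X+\Delta$ is pseudo-effective if and only if $K_Y+\Delta_Y$ is, and a good minimal model, resp.\ a Mori fiber space, of $(Y,\Delta_Y)$ descends to one of $(X,\Delta)$. Thus we may replace $(X,\Delta)$ by $(Y,\Delta_Y)$.

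Now the dichotomy. Fix an ample $\mathbb{Q}$-divisor $A$. If $K_X+\Delta$ is not pseudo-effective, I would run a $(K_X+\Delta)$-MMP with scaling of $A$: the required flips for $\mathbb{Q}$-factorial dlt pairs exist, and since the limit of the scaling thresholds cannot be positive, the program reaches a Mori fiber space provided one controls termination --- which, via special termination, reduces to the minimal model theory (termination of flips) for klt pairs in lower dimension. If $K_X+\Delta$ is pseudo-effective, the target is a good minimal model, and this splits into two. Existence of a minimal model $(X',\Delta')$ with $K_{X'}+\Delta'$ nef: running an MMP with scaling and arguing by induction on the number of log canonical centers (special termination plus perturbation of coefficients) reduces this to the existence of minimal models for klt pairs of the same dimension, with lower-dimensional input. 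Goodness, i.e.\ semiampleness of $K_{X'}+\Delta'$: abundance for log canonical pairs reduces to the klt case, and klt abundance in dimension $n$ reduces --- through the Iitaka fibration and the canonical bundle formula --- to nonvanishing in dimension $n$ together with abundance in dimensions below $n$.

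The hard part is precisely what these reductions do not deliver: termination of flips for klt pairs (open in general for $\dim X\ge 5$) and the abundance conjecture for klt pairs (known only in low dimension and in special cases). Effectively Conjecture~\ref{conjminimalmodel} is, modulo the cited reduction results, equivalent to the conjunction of these two; a genuine proof must attack them directly --- through the boundedness-and-special-termination circle of ideas for the former, and through nonvanishing and extension theorems for the latter. For the present paper the operative instance is the conditional, relative one: assuming the klt minimal model theory in dimension $\le\dim Z$, one transports the problem for a relatively trivial pair $(X,\Delta)/Z$ down to the base $Z$ via the canonical bundle formula, where the hypothesis applies.
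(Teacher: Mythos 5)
The statement you were asked to prove is Conjecture~\ref{conjminimalmodel}, and the paper contains no proof of it: it is stated explicitly as an open problem, known only for log canonical threefolds and for Kawamata log terminal pairs with big boundary, and everything the paper actually proves (Theorem~\ref{thmmain} and its consequences) is conditional on the klt minimal model theory in bounded dimension. Your submission correctly recognizes this. What you have written is not a proof but a survey of the standard reductions --- dlt modification, MMP with scaling, special termination, and the reduction of abundance to nonvanishing plus lower-dimensional abundance --- together with an honest identification of the two genuine obstacles, namely termination of klt flips and klt abundance. As an assessment of the state of the problem this is accurate; as a proof it has an irreparable gap exactly where you say it does, and no comparison with ``the paper's own proof'' is possible because there is none.

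One correction to your closing remark about the paper's operative conditional statement. The reduction of the relatively trivial case to the base $Z$ via the canonical bundle formula is only immediate when $(X,\Delta)$ is klt, or more generally when every lc center of $(X,\Delta)$ dominates $Z$ (this is Proposition~\ref{propkltcase}, resting on the Fujino--Gongyo form of the canonical bundle formula). The substance of Theorem~\ref{thmmain} is precisely the case where some lc centers are vertical over $Z$: there the canonical bundle formula is not available in the required generality, and the author instead writes $\Delta=\Delta'+\Delta''$ with $\Delta''$ vertical and reduced, perturbs to $K_X+\Delta-a_n\Delta''$, and runs an induction combining the $D$-MMP with scaling on the base, the ACC for log canonical thresholds, and the ACC for numerically trivial pairs to recover the unperturbed pair as $a_n\to 0$. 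So even the conditional relative statement is considerably more delicate than ``transport the problem down to the base,'' and that extra work is the actual content of the paper.
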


Conjecture \ref{conjminimalmodel} for log canonical threefolds is proved by Kawamata, Koll\'ar, Matsuki, Mori, Shokurov and others. 
Conjecture \ref{conjminimalmodel} for Kawamata log terminal pairs with big boundary divisors is also proved by Birkar, Cascini, Hacon and M\textsuperscript{c}Kernan \cite{bchm}. 
But Conjecture \ref{conjminimalmodel} is still open when the dimension is greater than three. 

An interesting case of Conjecture \ref{conjminimalmodel} is when $(X,\Delta)$ is a relative log canonical pair whose log canonical divisor is relatively trivial. 
The situation is a special case of lc-trivial fibration, which is expected to play a crucial role in inductive arguments. 
For example, Ambro's canonical bundle formula for Kawamata log terminal pairs, which is proved by Ambro \cite{ambro}, gives an inductive argument. 
On any klt-trivial fibration $(X,\Delta)\to Z$, which is a special case of lc-trivial fibration, Conjecture \ref{conjminimalmodel} for $(X,\Delta)$ can be reduced to Conjecture \ref{conjminimalmodel} for a Kawamata log terminal pair on $Z$ by the canonical bundle formula.  
Ambro's canonical bundle formula is expected to hold for log canonical pairs in full generality but it is only partially solved (cf.~\cite{fujino-remarks}, \cite{fg-bundle} and \cite{fg-lctrivial}).

In this paper we establish an inductive argument for log canonical pairs in the above situation.
The following is the main result of this paper. 

\begin{theo}\label{thmmain}
Fix a nonnegative integer $d_{0}$, and assume the existence of a good minimal model or a Mori fiber space for all $d$-dimensional  projective Kawamata log terminal pairs with boundary $\mathbb{Q}$-divisors such that $d\leq d_{0}$. 

Let $\pi:X \to Z$ be a projective surjective morphism of normal projective varieties such that ${\rm dim}\,Z\leq d_{0}$, 
and let $(X,\Delta)$ be a log canonical pair such that $\Delta$ is a $\mathbb{Q}$-divisor. 
Suppose that $K_{X}+\Delta \sim_{\mathbb{Q},\,Z}0$.  

Then $(X,\Delta)$ has a good minimal model or a Mori fiber space.
\end{theo}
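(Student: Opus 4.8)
The plan is to run everything through the structure morphism $\pi$ and reduce to the minimal model theory assumed on the base, whose dimension is at most $d_{0}$; I argue by induction on $\dim X$. \textbf{Reductions.} Replacing $\pi$ by its Stein factorization, I may assume $\pi$ has connected fibers: the new base is finite over $Z$, hence still normal, projective, of dimension $\le d_{0}$, and $K_{X}+\Delta\sim_{\mathbb{Q},\,Z}0$ is preserved. Taking a dlt blow-up $(Y,\Gamma)\to(X,\Delta)$, which is crepant, we have $K_{Y}+\Gamma\sim_{\mathbb{Q},\,Z}0$, and it is enough to prove the theorem for $(Y,\Gamma)$; so I also assume $(X,\Delta)$ is $\mathbb{Q}$-factorial dlt. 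Fix a $\mathbb{Q}$-Cartier $\mathbb{Q}$-divisor $D$ on $Z$ with $K_{X}+\Delta\sim_{\mathbb{Q}}\pi^{*}D$.

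\textbf{The Kawamata log terminal case.} Suppose $\lfloor\Delta\rfloor=0$. Then $\pi$ is a klt-trivial fibration, so Ambro's canonical bundle formula applies: after a crepant birational modification of $(X,\Delta)$ and a birational modification of the base, which we rename $W$ (so $\dim W\le d_{0}$), there is a morphism $\rho\colon X\to W$ and $\mathbb{Q}$-divisors $B_{W}\ge0$, $M_{W}$ with $K_{X}+\Delta\sim_{\mathbb{Q}}\rho^{*}(K_{W}+B_{W}+M_{W})$, where $(W,B_{W})$ is klt and $M_{W}$ is nef; that is, $(W,B_{W}+M_{W})$ is a generalized klt pair of dimension $\le d_{0}$. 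From the assumed minimal model theory for klt pairs in dimension $\le d_{0}$ one obtains a good minimal model, respectively a Mori fiber space, for $(W,B_{W}+M_{W})$ according to whether $K_{W}+B_{W}+M_{W}$ is pseudo-effective. Finally I transfer the conclusion back to $X$: since $K_{X}+\Delta=\rho^{*}(K_{W}+B_{W}+M_{W})$, a descent argument comparing relative ample models over the successive bases shows that $(X,\Delta)$ has a good minimal model, respectively a Mori fiber space, in the same two cases.

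\textbf{The log canonical case.} Now let $S=\lfloor\Delta\rfloor\ne0$. By adjunction $K_{S}+\Delta_{S}=(K_{X}+\Delta)|_{S}\sim_{\mathbb{Q}}0$ over $\pi(S)$, and $(S,\Delta_{S})$ is a projective semi-log canonical pair with $\dim S<\dim X$ and $\dim\pi(S)\le d_{0}$; by the induction hypothesis, in its semi-log canonical formulation, $(S,\Delta_{S})$ has a good minimal model or a Mori fiber space. I now run a $(K_{X}+\Delta)$-minimal model program with scaling of an ample divisor. Special termination applies because, restricted to $S$, this is a minimal model program for $(S,\Delta_{S})$, which has a good minimal model; combining this with the klt case just proved — applied to an auxiliary Kawamata log terminal pair that is again relatively trivial over (a suitable model of) $Z$ — forces the program to terminate. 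In the pseudo-effective case the resulting nef $K_{X}+\Delta$ is then shown to be semiample by extending sections from $S$, the required vanishing coming from Kawamata--Viehweg vanishing on the klt pair obtained by slightly decreasing $S$; in the non-pseudo-effective case the program ends with a Mori fiber space. Hence $(X,\Delta)$, and so the original pair, has a good minimal model or a Mori fiber space.

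\textbf{Expected main obstacle.} Two points need genuine work. In the klt case, one must control the moduli divisor $M_{W}$ and run a minimal model program for the \emph{generalized} klt pair $(W,B_{W}+M_{W})$ on the strength of the klt minimal model theory in dimension $\le d_{0}$, and then make the descent from $W$ to $X$ precise (in particular in the Mori fiber case, where the fibration structure does not simply pull back). In the log canonical case, the crux is exactly the interaction in the third paragraph: upgrading special termination near $S$, together with the klt case, to termination of the \emph{whole} program and to semiampleness globally — this is where the relative triviality of $K_{X}+\Delta$ and the bound $\dim Z\le d_{0}$ must be used in an essential way, and organizing the induction so that everything fits is the delicate part.
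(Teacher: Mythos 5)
Your opening reductions (Stein factorization, dlt blow-up) and your treatment of the case $\llcorner \Delta \lrcorner=0$ are close to the paper's: there the canonical bundle formula is invoked in the form of \cite[Corollary 3.2]{fg-bundle}, which already produces an honest klt pair $(Z,\Psi)$ with $K_{X}+\Delta\sim_{\mathbb{Q}}\pi^{*}(K_{Z}+\Psi)$, so no minimal model theory for \emph{generalized} pairs is needed, and the descent from the base back to $X$ is done concretely via the very exceptional divisor lemma \cite[Lemma 3.3]{birkar-flip} (Proposition \ref{propkltcase}). Note also that Proposition \ref{propkltcase} applies whenever every lc center of $(X,\Delta)$ dominates $Z$, not only in the klt case; combined with Lemma \ref{lemlogresol} and Corollary \ref{cordltblowup}, this shows the real obstruction is only the \emph{vertical} reduced part $\Delta''$ of the boundary, not all of $\llcorner \Delta \lrcorner$. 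Your log canonical case does not engage with this, and that is where the genuine gap lies.

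Concretely, three steps of your third paragraph would fail as stated. First, special termination along $S=\llcorner\Delta\lrcorner$ only shows the MMP is eventually disjoint from $S$; away from $S$ you are not in the situation of a relatively trivial klt pair over a base of dimension $\le d_{0}$, so "combining with the klt case" does not force global termination. Moreover, termination of the induced flips on the strata of $S$ requires good minimal models for the adjoint pairs, which the paper obtains by applying the inductive hypothesis for $d_{0}-1$ to $(T,\Theta)\to Z_{T}$ with $\dim Z_{T}<\dim Z$ (Step 3 of Proposition \ref{propcase3}); the induction is on $d_{0}$, not on $\dim X$, and your induction on $\dim X$ via adjunction would force you to prove a semi-log canonical version of the theorem for the non-normal, possibly reducible $S$, which is a nontrivial strengthening you do not supply. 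Second, the paper's proof of the cases where $D$ is not big does not proceed by termination plus abundance at all: it builds good minimal models of the perturbed pairs $(X,\Delta-a_{n}\Delta'')$ for $a_{n}\downarrow 0$, uses the ACC for log canonical thresholds (Theorem \ref{thmacclct}) to show infinitely many of the resulting $(X_{n},\Delta_{X_{n}})$ stay lc, uses the ACC for numerically trivial pairs (Theorem \ref{thmglobalacc}) to locate an index where the induced contraction is relatively trivial over a base $W$ with $\dim W<\dim Z$, and only then invokes the induction; none of these ingredients appears in your sketch. Third, semi-ampleness of the nef limit cannot be obtained by "extending sections from $S$" with Kawamata--Viehweg vanishing, which is not available for lc pairs in the required form; the paper instead uses that nef and log abundant implies semi-ample, via \cite[Theorem 4.12]{fujino-gongyo} together with the induction hypothesis applied to the lc centers (Lemma \ref{lemabundance}).
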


The following theorem follows from Theorem \ref{thmmain}. 

\begin{theo}\label{thmtrueiitaka}
Let $(X,\Delta)$ be a projective log canonical pair such that $\Delta$ is a $\mathbb{Q}$-divisor and the log Kodaira dimension $\kappa(X,K_{X}+\Delta)$ is nonnegative. 
Let $F$ be the general fiber of the Iitaka fibration and $(F,\Delta_{F})$ be the restriction of $(X,\Delta)$ to $F$. 
Suppose that $(F,\Delta_{F})$ has a good minimal model. 

If $(X,\Delta)$ is Kawamata log terminal or $\kappa(X,K_{X}+\Delta)\leq 4$, then $(X,\Delta)$ has a good minimal model. 
\end{theo}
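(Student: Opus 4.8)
The plan is to deduce Theorem \ref{thmtrueiitaka} from Theorem \ref{thmmain} by passing to the Iitaka fibration of $K_X+\Delta$. First I would replace $(X,\Delta)$ by a log smooth model on which this fibration becomes a morphism; this is harmless because, for log canonical pairs with $\mathbb{Q}$-boundary, the existence of a good minimal model is unaffected by the standard birational modifications (a fact that should be recorded as a preliminary lemma). So we may assume that the Iitaka fibration $\pi\colon X\to Z$ of $K_X+\Delta$ is a projective surjective morphism onto a smooth projective variety $Z$ with $\dim Z=\kappa(X,K_X+\Delta)=:k$. For a general fiber $F$ of $\pi$ we then have $(K_X+\Delta)|_{F}=K_{F}+\Delta_{F}$ by adjunction, $\kappa(F,K_{F}+\Delta_{F})=0$ by the construction of the Iitaka fibration, and $(F,\Delta_{F})$ has a good minimal model by hypothesis (the general fiber of $\pi$ is birational to $F$, and this does not affect having a good minimal model).

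Next I would produce a good minimal model of $(X,\Delta)$ \emph{over} $Z$. Since the restriction of $(X,\Delta)$ to the general fiber of $\pi$ has a good minimal model, a relative $(K_X+\Delta)$-minimal model program over $Z$ yields a birational contraction $(X,\Delta)\dashrightarrow(X',\Delta')$ over $Z$ onto a log canonical pair such that $K_{X'}+\Delta'$ is semiample over $Z$; this is the relative analogue of the principle that a pair fibered by good minimal models has a good minimal model. The restriction of $K_{X'}+\Delta'$ to a general fiber of $\pi'\colon X'\to Z$ is the log canonical divisor of a good minimal model of $(F,\Delta_{F})$, so the relative Iitaka dimension of $K_{X'}+\Delta'$ over $Z$ is $\kappa(F,K_{F}+\Delta_{F})=0$, and together with relative semiampleness this forces $K_{X'}+\Delta'\sim_{\mathbb{Q},\,Z}0$. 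Thus $\pi'\colon X'\to Z$ is a projective surjective morphism of normal projective varieties, $(X',\Delta')$ is log canonical with $\Delta'$ a $\mathbb{Q}$-divisor and $K_{X'}+\Delta'\sim_{\mathbb{Q},\,Z}0$, and any good minimal model of $(X',\Delta')$ is a good minimal model of $(X,\Delta)$; so it suffices to treat $(X',\Delta')$.

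Now I would split into the two cases. If $\kappa(X,K_X+\Delta)\le 4$, then $\dim Z=k\le 4$ and Conjecture \ref{conjminimalmodel} holds for Kawamata log terminal pairs of dimension at most $4$ (it is classical for dimension $\le 3$, and the four-dimensional case is also established). Applying Theorem \ref{thmmain} with $d_0=4$ to $\pi'\colon(X',\Delta')\to Z$, we conclude that $(X',\Delta')$ has a good minimal model or a Mori fiber space; but $\kappa(X',K_{X'}+\Delta')=\kappa(X,K_X+\Delta)\ge 0$, so $K_{X'}+\Delta'$ is pseudo-effective, the Mori fiber space case is excluded, and hence $(X',\Delta')$---and therefore $(X,\Delta)$---has a good minimal model. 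If instead $(X,\Delta)$ is Kawamata log terminal, the conclusion is already available: it is the theorem of Lai on varieties fibered by good minimal models, applied to the Iitaka fibration. Alternatively, and parallel to the previous case, one may apply the canonical bundle formula of Fujino--Mori and Ambro to the klt-trivial fibration $\pi'\colon(X',\Delta')\to Z$ (passing first to a suitable crepant model) to write $K_{X'}+\Delta'\sim_{\mathbb{Q}}(\pi')^{*}(K_Z+B_Z+M_Z)$ with $(Z,B_Z)$ Kawamata log terminal and $M_Z$ nef; since the Iitaka dimension of $K_Z+B_Z+M_Z$ is $\dim Z$, this divisor is big, so the generalized pair $(Z,B_Z+M_Z)$ has a good minimal model by the minimal model program for generalized pairs with big boundary, and running the induced $(K_{X'}+\Delta')$-minimal model program on $X'$ yields a good minimal model.

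The main obstacle is the middle step: producing the relative good minimal model over $Z$ and checking that $K_{X'}+\Delta'$ becomes relatively $\mathbb{Q}$-trivial. This rests on the relative minimal model program for log canonical pairs, with special attention to the non-Kawamata-log-terminal locus, and on the bookkeeping needed to turn the Iitaka fibration into a morphism while keeping control of the existence of good minimal models. Once this reduction is established, the log canonical case with $\kappa\le 4$ is a direct application of Theorem \ref{thmmain}, and the Kawamata log terminal case follows from the classical fibration arguments above.
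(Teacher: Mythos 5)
Your overall strategy --- make the Iitaka fibration a morphism, contract the relative log canonical divisor to something relatively $\mathbb{Q}$-trivial over the base $Z$, then invoke Theorem \ref{thmmain} --- is the paper's strategy. But there is a genuine gap in the case $\kappa(X,K_X+\Delta)\le 4$: you apply Theorem \ref{thmmain} with $d_0=4$, which requires the existence of a \emph{good} minimal model or Mori fiber space for \emph{all} projective klt pairs of dimension $\le 4$. That is not established; the paper states explicitly that Conjecture \ref{conjminimalmodel} is open in dimension greater than three (what is known in dimension four is weaker, e.g.\ finite generation of lc rings, not abundance). The paper avoids this by exploiting that the divisor $D$ on the base of the Iitaka fibration is \emph{big} (since $\kappa(Z,D)=\dim Z$), and bigness is exactly the hypothesis under which Proposition \ref{propcase3} (packaged as Theorem \ref{thmfibration} for $\dim Z=4$) runs with only the $(d_0-1)$-dimensional, i.e.\ three-dimensional, MMP as input. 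Without routing through the big case you cannot close the argument for $\kappa=4$.

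A second, smaller issue is your middle step. You assert that relative semiampleness of $K_{X'}+\Delta'$ over $Z$ together with relative Iitaka dimension zero forces $K_{X'}+\Delta'\sim_{\mathbb{Q},\,Z}0$. This inference is not valid as stated: semiampleness over $Z$ gives $K_{X'}+\Delta'\sim_{\mathbb{Q}}g^{*}A$ for $g\colon X'\to Y$ over $Z$ with $A$ ample over $Z$, and triviality of the restriction to the general fiber only makes $Y\to Z$ generically finite (hence birational after Stein factorization), not an isomorphism, so $K_{X'}+\Delta'$ need not descend to $Z$. The paper's mechanism is different and is the real content of the proof: first make the fibration equidimensional (Kawamata, Abramovich--Karu) so every vertical divisor maps onto a divisor of $W$, write $K_X+\Delta\sim_{\mathbb{Q}}E^{h}+E^{v}$, subtract the maximal pullback $f^{*}B'$ from $E^{v}$ to obtain a very exceptional effective $E'$, run the $(K_X+\Delta)$-MMP over $W$ so that $E^{h}$ is eventually contracted (here the hypothesis on $(F,\Delta_F)$ is used), and then kill $E'$ by Birkar's lemma on very exceptional divisors that are limits of movable divisors. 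You flag this step as the main obstacle, which is fair, but the justification you offer for it does not work. Finally, for the klt case, citing Lai directly is not quite right (his result is for terminal $X$ with $\Delta=0$); the paper instead uses Proposition \ref{propbigkltcase}, which again rests on the bigness of $D$.
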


As a corollary of Theorem \ref{thmtrueiitaka}, we have 

\begin{coro}\label{corlcring}
Let $(X,\Delta)$ be a projective log canonical pair such that ${\rm dim}\,X =5$ and $\Delta$ is a $\mathbb{Q}$-divisor. 
If $(X,\Delta)$ is not of log general type, then the log canonical ring $\mathcal{R}(X,K_{X}+\Delta)$ is a finitely generated $\mathbb{C}$-algebra. 
\end{coro}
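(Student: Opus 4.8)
The plan is to argue according to the value of the log Iitaka dimension $\kappa := \kappa(X, K_X + \Delta)$, which satisfies $\kappa \le 4$ because $(X,\Delta)$ is not of log general type and $\dim X = 5$. The two extreme values need no minimal model theory. If $\kappa = -\infty$, then $\mathcal{R}(X, K_X + \Delta) = \mathbb{C}$. If $\kappa = 0$, then $\dim_{\mathbb{C}} H^0(X, \mathcal{O}_X(\lfloor m(K_X + \Delta) \rfloor)) \le 1$ for every $m \ge 0$, so the set $S$ of those $m$ for which this group is nonzero is a sub-semigroup of $\mathbb{Z}_{\ge 0}$; choosing a nonzero section $s_a$ for each $a$ in a finite generating set of $S$ and using that products of nonzero sections of line bundles on the irreducible variety $X$ remain nonzero, one checks that $\mathcal{R}(X, K_X + \Delta)$ is generated as a $\mathbb{C}$-algebra by these $s_a$, hence finitely generated. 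So the content lies in the range $1 \le \kappa \le 4$.

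For $1 \le \kappa \le 4$, I would pass to the Iitaka fibration $f : X \dashrightarrow Y$ of $K_X + \Delta$; then $\dim Y = \kappa$, and a general fiber $F$ satisfies $\dim F = 5 - \kappa$ and $\kappa(F, K_F + \Delta_F) = 0$, where $(F, \Delta_F)$ is the restriction of $(X, \Delta)$. The aim is to verify the hypothesis of Theorem \ref{thmtrueiitaka}, that $(F, \Delta_F)$ has a good minimal model, and then conclude via $\kappa \le 4$ that $(X, \Delta)$ has a good minimal model. When $\kappa \ge 2$ this is immediate: $\dim F \le 3$ and $K_F + \Delta_F$ is pseudo-effective, so $(F, \Delta_F)$ has a minimal model by the minimal model program for log canonical pairs in dimension at most three, and that minimal model is good by the abundance theorem in dimension at most three.

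The remaining case $\kappa = 1$, in which $(F, \Delta_F)$ is a log canonical fourfold with $\kappa(F, K_F + \Delta_F) = 0$, is the main obstacle, because the abundance theorem in dimension four is not available in the generality we would need. Here the plan is to bypass Theorem \ref{thmtrueiitaka} and apply Theorem \ref{thmmain} directly over the curve $Y$. Concretely: replace $(X, \Delta)$ by a crepant log canonical model $(W, \Gamma)$ on which $f$ becomes a morphism $h : W \to Y$; observe that $K_W + \Gamma$ is $\mathbb{Q}$-linearly equivalent over the generic point of $Y$ to a unique effective divisor; spread this divisor out over $Y$ and, after further blow-ups, split off its horizontal part so as to produce a projective log canonical pair $(W', \Gamma')$ with a contraction to $Y$ such that $K_{W'} + \Gamma' \sim_{\mathbb{Q}, Y} 0$ and such that $(W', \Gamma')$ has a good minimal model if and only if $(X, \Delta)$ does. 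Since $\dim Y = 1$, the hypothesis of Theorem \ref{thmmain} concerning one-dimensional Kawamata log terminal pairs is vacuous, so Theorem \ref{thmmain} applies to $(W', \Gamma') \to Y$ and yields the good minimal model. The delicate part is making this last reduction precise — keeping track of singularities, ensuring $(W', \Gamma')$ is genuinely log canonical, and verifying that the ``good minimal model'' property is preserved when the relative fixed part is absorbed — and this is exactly the situation, a log canonical pair with relatively trivial log canonical divisor over a low-dimensional base, that the earlier part of the paper is designed to handle.

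In every case $(X, \Delta)$ has a good minimal model $(X', \Delta')$, so $K_{X'} + \Delta'$ is semiample and $\mathcal{R}(X', K_{X'} + \Delta')$ is a finitely generated $\mathbb{C}$-algebra. Since a good minimal model has the same log canonical ring as the pair of which it is a model, $\mathcal{R}(X, K_X + \Delta) \cong \mathcal{R}(X', K_{X'} + \Delta')$ is finitely generated, which is the assertion of the corollary.
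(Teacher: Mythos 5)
Your handling of $\kappa\le 0$ and of $2\le\kappa\le 4$ is correct and agrees with the paper: in the latter range the general fiber of the Iitaka fibration has dimension at most three, so $(F,\Delta_F)$ has a good minimal model by the three-dimensional log MMP and abundance, and Theorem \ref{thmtrueiitaka} applies. The genuine gap is in the case $\kappa=1$. You rightly observe that Theorem \ref{thmtrueiitaka} is unavailable there (the general fiber is a log canonical fourfold with $\kappa=0$, for which good minimal models are not known), but the substitute you propose does not work. To invoke Theorem \ref{thmmain} over the curve $Y$ you must first produce a log canonical pair $(W',\Gamma')\to Y$ with $K_{W'}+\Gamma'\sim_{\mathbb{Q},\,Y}0$, i.e.\ with log canonical divisor $\mathbb{Q}$-linearly trivial on the general fiber. \lq\lq Splitting off the horizontal part\rq\rq\ of an effective divisor $E\sim_{\mathbb{Q}}K_{X}+\Delta$ does not achieve this: the horizontal part $E^{h}$ restricts on the general fiber $F$ to the fixed divisor of $K_{F}+\Delta_{F}$, which need not be supported on $\Delta_{F}$ and can have arbitrarily large coefficients, so $\Gamma'=\Delta-E^{h}$ is not a boundary, and no blow-up repairs this while keeping the relevant equivalences. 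Worse, even granting such a $(W',\Gamma')$, Theorem \ref{thmmain} over a curve is essentially vacuous in the pseudo-effective case ($K_{W'}+\Gamma'$ is the pullback of a nef, hence semi-ample, divisor on the curve $Y$, so it is already semi-ample), and your asserted equivalence that $(W',\Gamma')$ has a good minimal model if and only if $(X,\Delta)$ does is precisely the hard point: it amounts to contracting $E^{h}$, i.e.\ to the good minimal model problem for the four-dimensional fibers with $\kappa=0$, which is open. Consequently your closing claim that $(X,\Delta)$ has a good minimal model in every case is unjustified for $\kappa=1$ and is stronger than anything known.

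The paper sidesteps this entirely. For $\kappa=1$ it uses the canonical bundle formula of Fujino--Mori \cite{fujino-mori}: there exist a $\mathbb{Q}$-divisor $B$ on the base $W$ of the Iitaka fibration and a positive integer $m$ with $\mathcal{R}\bigl(X,m(K_{X}+\Delta)\bigr)\simeq\mathcal{R}(W,mB)$. Since $W$ is a smooth curve, $\mathcal{R}(W,mB)$ is finitely generated, and finite generation of a truncation implies finite generation of $\mathcal{R}(X,K_{X}+\Delta)$. This yields only finite generation of the ring, not a good minimal model, but that is all the corollary asserts. You should replace your $\kappa=1$ argument with this descent to the curve.
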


We recall some previous results related to Theorem \ref{thmmain}, Theorem \ref{thmtrueiitaka} and Corollary \ref{corlcring}. 
In \cite{gongyolehmann}, Gongyo and Lehmann established an inductive argument for $\mathbb{Q}$-factorial Kawamata log terminal pairs $(X,\Delta)$ with a contraction $f:X\to Z$ such that $\nu((K_{X}+\Delta)|_{F})=0$, where $\nu(\,\cdot\,)$ is the numerical dimension and $F$ is the general fiber of $f$. 
More precisely, in the situation, they proved existence of a Kawamata log terminal pair $(Z',\Delta_{Z'})$ such that $Z'$ is birational to $Z$ and $(X,\Delta)$ has a good minimal model if and only if $(Z',\Delta_{Z'})$ has a good minimal model.    
On the other hand, Birkar and Hu \cite{birkarhu-arg} proved existence of a good minimal model for log canonical pairs $(X,\Delta)$ when $K_{X}+\Delta$ is the pullback of a big divisor on a normal variety whose augmented base locus does not contain the image of any lc center of $(X,\Delta)$.  
In particular they proved existence of a good minimal model for all log canonical pairs $(X,\Delta)$ when $K_{X}+\Delta$ is big and its augmented base locus does not contain any lc center of $(X,\Delta)$. 
On the other hand, Lai \cite{lai} proved Theorem \ref{thmtrueiitaka} in the case when $X$ has at worst terminal singularities and $\Delta=0$. 
Related to Corollary \ref{corlcring}, Fujino \cite{fujino-lcring} proved the finite generation of log canonical rings for all log canonical fourfolds. 
In the klt case, Birkar, Cascini, Hacon and M\textsuperscript{c}Kernan \cite{bchm} proved the finite generation of log canonical rings in all dimensions. 

In Theorem \ref{thmmain}, the case when $X=Z$ implies equivalence of Conjecture \ref{conjminimalmodel} for Kawamata log terminal pairs and Conjecture \ref{conjminimalmodel} for log canonical pairs (see also \cite{fg-lcring}). 
If $(X,\Delta)$ is Kawamata log terminal, then Theorem \ref{thmmain} follows from Ambro's canonical bundle formula for Kawamata log terminal pairs (cf.~Proposition \ref{propkltcase}).
But since we do not assume that $(X,\Delta)$ is Kawamata log terminal, we can not use the canonical bundle formula directly. 
We also note that we do not have any assumptions about lc centers of $(X,\Delta)$ in Theorem \ref{thmmain}.
We hope that Theorem \ref{thmmain} will play an important role in inductive arguments for the minimal model program.

We outline the proof of Theorem \ref{thmmain}. 
We prove Theorem \ref{thmmain} by induction on $d_{0}$. 
Note that we can assume $K_{X}+\Delta$ is pseudo-effective. 
First we take a special dlt blow-up of $(X,\Delta)$ so that $\Delta=\Delta'+\Delta''$ where $\Delta''$ is a reduced divisor or $\Delta''=0$ and all lc centers of $(X,\Delta')$ dominate $Z$. 
Next we replace $\pi:(X,\Delta)\to Z$ so that $Z$ is $\mathbb{Q}$-factorial and $K_{X}+\Delta'\sim_{\mathbb{Q},\,Z}0$. 
Then we can apply Ambro's canonical bundle formula to $(X,\Delta')$, and if we write $K_{X}+\Delta\sim_{\mathbb{Q}}\pi^{*}D$, we can run the $D$-MMP with scaling. 
When $\Delta''=0$ (in particular when $(X,\Delta)$ is klt), a good minimal model for $D$ exists and we can show the existence of a good minimal model of $(X,\Delta)$ by using it. 
For details, see Section \ref{secreduction}. 
When $\Delta\neq0$, we divide into three cases. 
In any case, we can check that for any sufficiently small rational number $u>0$ the pair $(X,\Delta-u\Delta'')$ has a good minimal model or a Mori fiber space. 
For example, assume $(X,\Delta-u\Delta'')$ has a good minimal model and $D$ is not big, where $D$ satisfies $K_{X}+\Delta\sim_{\mathbb{Q}}\pi^{*}D$. 
This situation is one of the three cases and other cases are proved similarly. 
By choosing $u$ sufficiently small, we can construct a modification $(X',\Delta_{X'}) \to Z'$ of $(X,\Delta)\to Z$ such that $X\dashrightarrow X'$ is a sequence of finitely many steps of the $(K_{X}+\Delta-u\Delta'')$-MMP to a good minimal model. 
We can also check that Theorem \ref{thmmain} for $(X',\Delta_{X'})\to Z'$ implies Theorem \ref{thmmain} for $(X,\Delta)\to Z$. 
Since $D$ is not big, the contraction $X'\to W$ induced by $K_{X'}+\Delta_{X'}-u\Delta''_{X'}$ satisfies ${\rm dim}\,W<{\rm dim}\,Z$. 
Furthermore, by choosing $u$ appropriately, we can assume $X'\to W$ satisfies $K_{X'}+\Delta_{X'}\sim_{\mathbb{Q},\,W}0$. 
Then $(X',\Delta_{X'})$ has a good minimal model by the induction hypothesis. 
Thus we see that $(X,\Delta)$ has a good minimal model. 
For details, see Section \ref{secproofmainthm}. 

The contents of this paper are the following. 
In Section \ref{secpre} we collect some definitions and notations. 
In section \ref{secmmp} we introduce the definition of $D$-MMP, where $D$ is $\mathbb{R}$-Cartier and not necessarily a log canonical divisor, and prove some related result. 
In Section \ref{secreduction} we prove Theorem \ref{thmmain} in a spacial case which contains the klt case, and reduce Theorem \ref{thmmain} to a special situation. 
In Section \ref{secproofmainthm} we prove Theorem \ref{thmmain}. 
In Section \ref{secproofcorollary}, we prove Theorem \ref{thmtrueiitaka} and Corollary \ref{corlcring}. 

\begin{ack}
The author was partially supported by JSPS KAKENHI Grant Number JP16J05875.
The author would like to express his gratitude to his supervisor Professor Osamu Fujino for useful advice. 
He thanks Professor Paolo Cascini for a comment about Theorem \ref{thmtrueiitaka}. 
He also thanks Takahiro Shibata for discussions. 
He is grateful to the referee for valuable comments and suggestions. 
\end{ack}

\section{Preliminaries}\label{secpre}
In this section we collect some notations and definitions. 
We will freely use the notations and definitions in \cite{kollar-mori} and \cite{bchm} except the definition of models (see Definition \ref{defnmodels}). 
Here we write down only some important notations and definitions, including the notations not written in \cite{kollar-mori} or \cite{bchm}.

\begin{divisor}\label{saydivisors}
Let $\pi:X \to Z$ be a projective morphism of normal varieties and let $D=\sum d_{i}D_{i}$ be a $\mathbb{Q}$-divisor. 
Then $D$ is a {\it boundary $\mathbb{Q}$-divisor} if $0\leq d_{i} \leq 1$ for any $i$. 
The {\it round down} of $D$, denoted by $\llcorner D \lrcorner$, is $\sum \llcorner d_{i} \lrcorner D_{i}$ 
where $\llcorner d_{i} \lrcorner$ is the largest integer which is not greater than $d_{i}$. 
Suppose that $D$ is $\mathbb{Q}$-Cartier. 
Then $D$ is called a {\it log canonical divisor} if $D$ is the sum of the canonical divisor $K_{X}$ and a boundary $\mathbb{Q}$-divisor. 
$D$ is {\it trivial over $Z$}, denoted by $D\sim_{\mathbb{Q},\,Z}0$, if $D$ is $\mathbb{Q}$-linearly equivalent to the pullback of a  $\mathbb{Q}$-Cartier $\mathbb{Q}$-divisor on $Z$. 
$D$ is {\it anti-ample over $Z$} if $-D$ is ample over $Z$. 
In this paper we mean the same definition by saying that $D$ is {\it trivial} (resp.~{\it anti-ample}) {\it with respect to $\pi$}. 
$D$ is {\it semi-ample over} $Z$ if $D$ is a $\mathbb{Q}_{\geq 0}$-linear combination of semi-ample Cartier divisors over $Z$, or equivalently, there exists a morphism $f: X \to Y$ to a variety $Y$ over $Z$ such that $D$ is $\mathbb{Q}$-linearly equivalent to the pullback of an ample $\mathbb{Q}$-divisor over $Z$. 

For any $\mathbb{Q}$-divisor $D$ on $X$, we define a sheaf of $\mathcal{O}_{Z}$-algebra
$$\mathcal{R}(X/Z,D)=\underset{m\geq 0}{\bigoplus}\pi_{*}\mathcal{O}_{X}(\llcorner mD \lrcorner).$$
We simply denote $\mathcal{R}(X,D)$ when $Z$ is a point. 
If $D$ is a log canonical divisor, then $\mathcal{R}(X/Z,D)$ is nothing but the log canonical ring.

Similarly we can define boundary divisors, log canonical divisors, triviality over $Z$, semi-ampleness over $Z$, and so on for $\mathbb{R}$-divisors. 

Let $X \dashrightarrow Y$ be a birational map of normal projective varieties and let $D$ be an $\mathbb{R}$-divisor on $X$. 
Unless otherwise stated, we mean the birational transform of $D$ on $Y$ by denoting $D_{Y}$ or $(D)_{Y}$. 
\end{divisor}

\begin{sing}\label{saypairs}
Let $X$ be a normal variety and $\Delta$ be an effective $\mathbb{R}$-divisor such that $K_X+\Delta$ is $\mathbb{R}$-Cartier. 
Let $f:Y\to X$ be a log resolution of $(X, \Delta)$. 
Then we can write 
$$K_Y=f^*(K_X+\Delta)+\sum_{i} a(E_{i}, X,\Delta) E_i$$ 
where $E_{i}$ are prime divisors on $Y$ and $a(E_{i}, X,\Delta)$ is a real number for any $i$. 
Then we call $a(E_{i}, X,\Delta)$ the {\it discrepancy} of $E_{i}$ with respect to $(X,\Delta)$. 
The pair $(X, \Delta)$ is called {\it Kawamata log terminal} ({\it klt}, for short) if $a(E_{i}, X, \Delta) > -1$ for any log resolution $f$ of $(X, \Delta)$ and any $E_{i}$ on $Y$. 
$(X, \Delta)$ is called {\it log canonical} ({\it lc}, for short) if $a(E_{i}, X, \Delta) \geq -1$ for any log resolution $f$ of $(X, \Delta)$ and any $E_{i}$ on $Y$. 
$(X, \Delta)$ is called {\it divisorial log terminal} ({\it dlt}, for short) if $\Delta$ is a boundary $\mathbb{R}$-divisor and there exists a log resolution $f:Y \to X$ of $(X, \Delta)$ such that $a(E, X, \Delta) > -1$ for any $f$-exceptional prime divisor $E$ on $Y$.
\end{sing}

Next we recall the construction of dlt models. 
The following theorem is proved by Hacon.

\begin{theo}[Dlt blow-ups, cf.~{\cite[Theorem 10.4]{fujino-fund}}, {\cite[Theorem 3.1]{kollarkovacs}}]\label{thmdltblowup}
Let $X$ be a normal quasi-projective variety of dimension $n$ 
and let $\Delta$ be an $\mathbb{R}$-divisor such that $(X, \Delta)$ is log canonical. 
Then there exists a projective birational morphism $f:Y \to X$ from a normal quasi-projective 
variety $Y$ such that 
\begin{itemize}
\item[(i)]
$Y$ is $\mathbb{Q}$-factorial, and
\item[(ii)]
if we set
$$\Gamma=f_{*}^{-1}\Delta +\sum_{E{\rm :}f \mathchar`- {\rm exceptional}}E,$$
then $(Y, \Gamma)$ is dlt and $K_{Y}+\Gamma=f^{*}(K_{X}+\Delta)$.
\end{itemize}
We call $(Y,\Gamma)$ a {\it dlt model} of $(X,\Delta)$. 
\end{theo}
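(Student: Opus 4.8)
The plan is to deduce the statement from a relative minimal model program over $X$ that starts from a log resolution and contracts exactly the exceptional divisors of positive discrepancy. First I would fix a log resolution $g\colon W\to X$ of $(X,\Delta)$ and set
$$\Gamma_{W}=g_{*}^{-1}\Delta+\sum_{E\colon g\text{-exceptional}}E,$$
the sum running over all $g$-exceptional prime divisors on $W$. Writing $K_{W}+\Gamma_{W}=g^{*}(K_{X}+\Delta)+G$, we get $G=\sum_{i}\bigl(1+a(E_{i},X,\Delta)\bigr)E_{i}$, a divisor supported on the $g$-exceptional locus; it is effective precisely because $(X,\Delta)$ is log canonical, and a $g$-exceptional prime divisor $E_{i}$ lies in $\Supp G$ if and only if $a(E_{i},X,\Delta)>-1$. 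By construction $(W,\Gamma_{W})$ is log smooth, hence $\mathbb{Q}$-factorial and dlt, and $K_{W}+\Gamma_{W}\sim_{\mathbb{R},\,X}G\ge 0$ with $G$ exceptional over $X$.

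Supposing for the moment that it can be run, I would then take a $(K_{W}+\Gamma_{W})$-MMP over $X$ with scaling of an ample divisor to a model $f\colon Y\to X$ on which $K_{Y}+\Gamma_{Y}$ is nef over $X$. Each step is negative with respect to $K_{W}+\Gamma_{W}$, hence with respect to $G$; since $G$ is effective, a short argument with the class of the extremal curve of a divisorial contraction shows that the contracted divisor is a component of the support of the birational transform of $G$ on the current model, so the only prime divisors contracted by $W\dashrightarrow Y$ lie in $\Supp G$. On the final model the birational transform $G_{Y}$ of $G$ is nef over $X$, effective and exceptional over $X$, so the negativity lemma forces $G_{Y}=0$; thus $\Supp G$ is contracted entirely and $K_{Y}+\Gamma_{Y}=f^{*}(K_{X}+\Delta)$. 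Consequently the $f$-exceptional prime divisors are exactly the $g$-exceptional divisors of discrepancy $-1$, each carried in $\Gamma_{Y}$ with coefficient $1$, whence $\Gamma_{Y}=f_{*}^{-1}\Delta+\sum_{E\colon f\text{-exceptional}}E$. Since $\mathbb{Q}$-factoriality and dlt-ness are preserved along the MMP, $(Y,\Gamma_{Y})$ is the desired dlt model.

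The main obstacle is the clause ``supposing it can be run'': the pair $(W,\Gamma_{W})$ is only dlt and its boundary need not be big over $X$, so neither the existence nor the termination of the MMP is covered by \cite{bchm} directly. The standard remedy exploits the relation $K_{W}+\Gamma_{W}\sim_{\mathbb{R},\,X}G$ with $G$ effective and exceptional over $X$, together with the minimal model theory for $\mathbb{Q}$-factorial klt pairs with boundary big over the base from \cite{bchm} and special termination, to produce such a model; the identification of the contracted locus with $\Supp G$ is then as above. As this is exactly the content of \cite[Theorem 10.4]{fujino-fund} and \cite[Theorem 3.1]{kollarkovacs}, in practice I would appeal to those references rather than reproduce the argument.
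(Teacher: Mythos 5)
The paper does not prove this statement: it records it as a known theorem of Hacon and simply cites \cite[Theorem 10.4]{fujino-fund} and \cite[Theorem 3.1]{kollarkovacs}, so there is no in-paper argument to compare against. Your sketch is the standard construction behind those references --- log resolution, $K_{W}+\Gamma_{W}\sim_{\mathbb{R},\,X}G$ with $G\geq 0$ exceptional, a relative MMP over $X$, and the negativity lemma to force $G_{Y}=0$ --- and it is correct as far as it goes, with the one genuinely hard point (existence and termination of that MMP for a dlt pair whose boundary is not big over $X$) rightly isolated and deferred to the same sources the paper cites.
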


Next we introduce the definition of some models and the construction of the log MMP with scaling for $\mathbb{Q}$-factorial log canonical pairs. 
Our definition of models is slightly different from the traditional one in \cite{kollar-mori} or \cite{bchm}.  

\begin{defi}[cf.~{\cite[Definition 2.1]{birkar-flip}} and {\cite[Definition 2.2]{birkar-flip}}]\label{defnmodels}
Let $\pi:X \to Z$ be a projective morphism from a normal variety to a variety and let $(X,\Delta)$ be a log canonical pair.  
Let $\pi ':X' \to Z$ be a projective morphism from a normal variety to $Z$ and $\phi:X \dashrightarrow X'$ be a birational map over $Z$. 
Let $E$ be the reduced $\phi^{-1}$-exceptional divisor on $X'$, that is, $E=\sum E_{j}$ where $E_{j}$ are $\phi^{-1}$-exceptional prime divisors on $X'$. 
Then the pair $(X', \Delta'=\phi_{*}\Delta+E)$ is called a {\it log birational model} of $(X,\Delta)$ over $Z$. 
A log birational model $(X', \Delta')$ of $(X,\Delta)$ over $Z$ is a {\it weak log canonical model} ({\it weak lc model}, for short) if 
\begin{itemize}
\item
$K_{X'}+\Delta'$ is nef over $Z$, and 
\item
for any prime divisor $D$ on $X$ which is exceptional over $X'$, we have
$$a(D, X, \Delta) \leq a(D, X', \Delta').$$ 
\end{itemize}
A weak lc model $(X',\Delta')$ of $(X,\Delta)$ over $Z$ is a {\it log minimal model} if 
\begin{itemize}
\item
$X'$ is $\mathbb{Q}$-factorial, and 
\item
the above inequality on discrepancies is strict. 
\end{itemize}
A log minimal model $(X',\Delta')$ of $(X, \Delta)$ over $Z$ is called a {\it good minimal model} if $K_{X'}+\Delta'$ is semi-ample over $Z$.

A log birational model $(X',\Delta')$ of $(X, \Delta)$ over $Z$ is called a {\it Mori fiber space} if $X'$ is $\mathbb{Q}$-factorial and there is a contraction $X' \to W$ over $Z$ with ${\rm dim}\,W<{\rm dim}\,X'$ such that 
\begin{itemize}
\item
the relative Picard number $\rho(X'/W)$ is one and $K_{X'}+\Delta'$ is anti-ample over $W$, and 
\item
for any prime divisor $D$ over $X$, we have
$$a(D,X,\Delta)\leq a(D,X',\Delta')$$
and strict inequality holds if $D$ is a divisor on $X$ and exceptional over $X'$.
\end{itemize} 
\end{defi}

\begin{defi}[The log MMP with scaling, cf.~{\cite[Definition 2.4]{birkar-flip}}, {\cite[4.4.11]{fujino-book}}]\label{defnmmpwithscaling}
Let $\pi:X \to Z$ be a projective surjective morphism from a $\mathbb{Q}$-factorial normal variety to a variety and $(X,\Delta+C)$ be a log canonical pair such that $K_{X}+\Delta+C$ is $\pi$-nef, $\Delta$ is a boundary $\mathbb{R}$-divisor and $C$ is an effective $\mathbb{R}$-divisor. 
We set $X_{0}=X$, $\Delta_{X_{0}}=\Delta$ and $C_{X_{0}}=C$ and set
$$\lambda_{0}={\rm inf}\{\mu \in \mathbb{R}_{\geq0}\,|\,K_{X_{0}}+\Delta_{X_{0}}+\mu C_{X_{0}}{\rm \;is\;nef \; over\;}Z\}.$$
If $\lambda_{0}=0$, we have nothing to do. 
If $\lambda_{0}>0$, then there is a $(K_{X_{0}}+\Delta_{X_{0}})$-negative extremal ray $R_{0}$ over $Z$ such that $(K_{X_{0}}+\Delta_{X_{0}}+\lambda_{0}C_{X_{0}})\cdot R_{0} =0$ by \cite[Theorem 18.9]{fujino-fund}. 
Let $f_{0}:X_{0}\to V_{0}$ be the extremal contraction over $Z$ given by $R_{0}$. 
If ${\rm dim}\,V_{0}<{\rm dim}\,X_{0}$, then we stop. 
Assume ${\rm dim}\,V_{0}={\rm dim}\,X_{0}$. 
Then $f_{0}$ is birational. 
If $f_{0}$ is a divisorial contraction, then set $X_{1}=V_{0}$, $\Delta_{X_{1}}=f_{0*}\Delta_{X_{0}}$ and $C_{X_{1}}=f_{0*}C_{X_{0}}$. 
If $f_{0}$ is a flipping contraction, then there is the flip $\phi:X_{0}\dashrightarrow X_{1}$ of $f_{0}$ over $Z$ by \cite[Corollary 1.2]{birkar-flip} or \cite[Corollary 1.8]{haconxu-lcc}, and we set $\Delta_{X_{1}}=\phi_{*}\Delta_{X_{0}}$ and $C_{X_{1}}=\phi_{*}C_{X_{0}}$. 
By construction $X_{1}$ is $\mathbb{Q}$-factorial.
We set 
$$\lambda_{1}={\rm inf}\{\mu \in \mathbb{R}_{\geq0}\,|\,K_{X_{1}}+\Delta_{X_{1}}+\mu C_{X_{1}}{\rm \;is\;nef \; over\;}Z\}.$$
Then we have $\lambda_{1} \leq \lambda_{0}$. 
If $\lambda_{1}=0$, we stop the process. 
If $\lambda_{1}>0$, then there is a $(K_{X_{1}}+\Delta_{X_{1}})$-negative extremal ray $R_{1}$ over $Z$ such that $(K_{X_{1}}+\Delta_{X_{1}}+\lambda_{1}C_{X_{1}})\cdot R_{1} =0$. 
By repeating this process, we get a non-increasing sequence of nonnegative real numbers $\{\lambda_{i}\}_{i \geq0}$ and a sequence of steps of the $(K_{X}+\Delta)$-MMP over $Z$
$$(X=X_{0},\Delta=\Delta_{X_{0}})\dashrightarrow(X_{1},\Delta_{X_{1}})\dashrightarrow \cdots \dashrightarrow(X_{i},\Delta_{X_{i}})\dashrightarrow\cdots .$$
This log MMP is called the {\it $(K_{X}+\Delta)$-MMP over $Z$ with scaling of $C$}.
\end{defi}

\begin{rema}\label{remdefnmodels}
Let $(X,\Delta)$ be a log canonical pair and $(X',\Delta')$ be a log minimal model or a Mori fiber space of $(X,\Delta)$.
If the birational map $X\dashrightarrow X'$ is a birational contraction, our definition of log minimal models and Mori fiber spaces coincides with the traditional one. 

In \cite{birkar-flip}, log minimal models and Mori fiber spaces are supposed to be dlt. 
On the other hand we do not assume it in Definition \ref{defnmodels}. 
But the difference is intrinsically not important. 
Indeed, if a log canonical pair $(X,\Delta)$ has a log minimal model $(X',\Delta')$ as in Definition \ref{defnmodels}, any dlt model of $(X',\Delta')$ is also a log minimal model of $(X,\Delta)$. 
If $(X,\Delta)$ has a Mori fiber space as in Definition \ref{defnmodels}, we can construct a Mori fiber space of $(X,\Delta)$ which is dlt by taking a dlt model of $(X,\Delta)$ and by running the log MMP with scaling. 
In this way, when $(X,\Delta)$ has a log minimal model (resp.~Mori fiber space), we can construct a log minimal model (resp.~Mori fiber space) of $(X,\Delta)$ which is dlt. 
\end{rema}

Next we introduce the definition of log canonical thresholds and pseudo-effective thresholds.

\begin{defi}[Log canonical thresholds, {cf.~\cite{hmx-acc}}]\label{defnlct}
Let $(X,\Delta)$ be a log canonical pair and let $M\neq0$ be an effective $\mathbb{R}$-Cartier $\mathbb{R}$-divisor. 
Then the {\it log canonical threshold} of $M$ with respect to $(X,\Delta)$, denoted by ${\rm lct}(X,\Delta;M)$, is 
$${\rm lct}(X,\Delta;M)={\rm sup}\{t\in \mathbb{R}\,|\, (X,\Delta+tM) {\rm \; is \; log \;canonical}\}.$$ 
\end{defi}

\begin{defi}[Pseudo-effective thresholds]
Let $(X,\Delta)$ be a projective log canonical pair and $M$ be an effective $\mathbb{R}$-Cartier $\mathbb{R}$-divisor such that $K_{X}+\Delta+tM$ is pseudo-effective for some $t \geq 0$. 
Then the {\it pseudo-effective threshold} of $M$ with respect to $(X,\Delta)$, denoted by $\tau(X,\Delta;M)$, is 
$$\tau(X,\Delta;M)={\rm inf}\{t\in \mathbb{R}_{\geq 0}\,|\, K_{X}+\Delta+tM {\rm \; is \; pseudo\mathchar`-effective}\}.$$ 
\end{defi}

The following important theorems are proved by Hacon, M\textsuperscript{c}Kernan and Xu \cite{hmx-acc}. 
In their paper, one is called the ACC for log canonical thresholds and another one is called the ACC for numerically trivial pairs. 

\begin{theo}[ACC for log canonical thresholds, {cf.~\cite[Theorem 1.1]{hmx-acc}}]\label{thmacclct}
Fix a positive integer $n$, a set $I \subset [0,1]$ and a set $J\subset \mathbb{R}_{>0}$, where $I$ and $J$ satisfy the DCC. 
Let $\mathfrak{T}_{n}(I)$ be the set of log canonical pairs $(X,\Delta)$, where $X$ is a variety of dimension $n$ and the coefficients of $\Delta$ belong to $I$. 
Then the set 
$$\{ {\rm lct}(X,\Delta;M) \,|\, (X,\Delta) \in \mathfrak{T}_{n}(I), {\rm \; the \;coefficients \; of\; }M{\rm \;belong\; to\;}J \}$$
satisfies the ACC.
\end{theo}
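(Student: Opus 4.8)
The plan is to argue by induction on $n=\dim X$ and by contradiction. Suppose that for some fixed $n$, $I$ and $J$ the conclusion fails: there are log canonical pairs $(X_{m},\Delta_{m})$ with $\dim X_{m}=n$ and effective $\mathbb{R}$-Cartier divisors $M_{m}\neq 0$ with coefficients in $J$ such that $t_{m}:={\rm lct}(X_{m},\Delta_{m};M_{m})$ is strictly increasing, with limit $t$. The cases $n=0,1$ I would do by hand: on a curve the lct is the minimum over $p\in\Supp M$ of $(1-\mathrm{coeff}_{p}\Delta)/\mathrm{coeff}_{p}M$, and since $\{1-i\mid i\in I\cup\{0\}\}$ and $\{1/j\mid j\in J\}$ satisfy the ACC and are bounded above, so does the set of all such minima, contradicting strict monotonicity. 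So from now on assume $n\ge 2$ and that the theorem holds in dimension $<n$.

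For the inductive step I would first reduce to the situation where $t_{m}$ is ``computed geometrically'' by a divisor with small center. After discarding the case in which the lct is computed by a divisor on $X_{m}$ along which $X_{m}$ is smooth — again covered by the base-case estimate — we may replace $M_{m}$ and pass to a log canonical place $E_{m}$ of $(X_{m},\Delta_{m}+t_{m}M_{m})$ whose center on $X_{m}$ meets $\Supp M_{m}$, so that $a(E_{m},X_{m},\Delta_{m}+tM_{m})<-1$ for all $t>t_{m}$. Using the dlt blow-up of Theorem \ref{thmdltblowup}, realise $E_{m}$ as a component of $\lfloor\Gamma_{m}\rfloor$ on a $\mathbb{Q}$-factorial model $\pi_{m}\colon Y_{m}\to X_{m}$ with $(Y_{m},\Gamma_{m})$ dlt and $K_{Y_{m}}+\Gamma_{m}=\pi_{m}^{*}(K_{X_{m}}+\Delta_{m}+t_{m}M_{m})$; write $\Gamma_{m}=\Gamma_{m}^{0}+t_{m}N_{m}$ according to the contributions of $\Delta_{m}$ (together with the exceptional divisors) and of $M_{m}$. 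Divisorial adjunction along $E_{m}$ gives $K_{E_{m}}+\Theta_{m}^{0}+t_{m}\Psi_{m}=(K_{Y_{m}}+\Gamma_{m})|_{E_{m}}$, and the crucial structural input is that the coefficients of the different $\Theta_{m}^{0}$ and of $\Psi_{m}$ lie in DCC sets $I'\subset[0,1]$ and $J'\subset\mathbb{R}_{>0}$ depending only on $n$, $I$ and $J$. One then checks, via inversion of adjunction for log canonicity, that ${\rm lct}(E_{m},\Theta_{m}^{0};\Psi_{m})=t_{m}$: the inequality ``$\ge$'' is the log canonicity of $(E_{m},\Theta_{m}^{0}+t_{m}\Psi_{m})$, while for ``$\le$'' the condition $a(E_{m},X_{m},\Delta_{m}+tM_{m})<-1$ for $t>t_{m}$ forces $(Y_{m},E_{m}+\Gamma_{m}^{0}+tN_{m})$, hence $(E_{m},\Theta_{m}^{0}+t\Psi_{m})$, to be non-lc near $E_{m}$. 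Since $\dim E_{m}=n-1$, the induction hypothesis makes $\{t_{m}\}$ an ACC set, a contradiction.

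The hard part, and the reason this is a theorem of \cite{hmx-acc} rather than a short exercise, is that the reduction in the previous paragraph cannot always be carried out cleanly: the obstruction to raising $t$ past $t_{m}$ may be genuinely global — for instance one may only be able to reduce, after running a $(K_{X}+\Delta)$-MMP with scaling as in Definition \ref{defnmmpwithscaling}, to a pair that is numerically trivial in the relevant direction — in which case controlling $t_{m}$ requires the Global ACC for numerically trivial log canonical pairs (coefficients in a DCC set automatically lie in a finite subset) together with the ACC for the volumes of log canonical pairs of log general type with DCC coefficients. These statements are not logically prior to ACC for log canonical thresholds; all three must be proved together by a single induction on dimension, invoking each other in dimension $\le n$, with the boundedness of log canonical models of log general type having fixed DCC coefficients and bounded volume — ultimately a consequence of finite generation, cf.~\cite{bchm} — as the deepest input. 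Organising that simultaneous induction is the main obstacle.
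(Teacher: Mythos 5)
This statement is quoted verbatim from Hacon--M\textsuperscript{c}Kernan--Xu \cite[Theorem 1.1]{hmx-acc}; the paper gives no proof of it, so there is no internal argument to compare yours against. Judged against the actual proof in \cite{hmx-acc}, your sketch gets the architecture right --- induction on dimension, extraction of a log canonical place by a dlt blow-up as in Theorem \ref{thmdltblowup}, control of the adjoint coefficients by DCC sets, and, crucially, the concession in your last paragraph that the statement cannot be isolated: it must be proved by a simultaneous induction together with the Global ACC (Theorem \ref{thmglobalacc}) and the ACC for volumes, with boundedness of log canonical models of log general type as the deepest input.

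The one step I would flag as a genuine gap is the claimed identity ${\rm lct}(E_{m},\Theta_{m}^{0};\Psi_{m})=t_{m}$ deduced from inversion of adjunction. For $t>t_{m}$ the crepant pullback of $K_{X_{m}}+\Delta_{m}+tM_{m}$ to $Y_{m}$ is \emph{not} $K_{Y_{m}}+E_{m}+\Gamma_{m}^{0}+tN_{m}$: the coefficient of $E_{m}$ itself (and of the other exceptional divisors) moves with $t$ and exceeds $1$, and this does not translate into non-log-canonicity of the fixed adjoint pair $(E_{m},\Theta_{m}^{0}+t\Psi_{m})$; indeed $\Psi_{m}$ may fail to detect the degeneration at all. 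This is precisely why \cite{hmx-acc} do not reduce ACC for lct in dimension $n$ to ACC for lct in dimension $n-1$. Instead they restrict the numerically trivial crepant pair $K_{Y_{m}}+\Gamma_{m}$ to a general fiber of $E_{m}$ over its center on $X_{m}$, obtaining a numerically trivial pair of dimension at most $n-1$ whose boundary coefficients involve $t_{m}$ through the DCC set $D(I\cup t_{m}J)$, and then invoke the Global ACC in lower dimension to force $\{t_{m}\}$ to stabilize. So your ``clean inductive case'' should be replaced by the fiber restriction plus Global ACC; with that substitution the sketch matches the structure of the cited proof. Since this is an external black box for the present paper, none of this affects how the theorem is used here.
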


\begin{theo}[ACC for numerically trivial pairs, {cf.~~\cite[Theorem 1.5]{hmx-acc}}]\label{thmglobalacc}
Fix a positive integer $n$ and a set $I \subset [0,1]$, which satisfies the DCC. 

Then there is a finite set $I_{0}\subset I$ with the following property:

If $(X,\Delta)$ is a log canonical pair such that 
\begin{enumerate}
\item[(i)]
X is projective of dimension $n$, 
\item[(ii)]
the coefficients of $\Delta$ belong to $I$, and
\item[(iii)]
$K_{X}+\Delta$ is numerically trivial,
\end{enumerate}
then the coefficients of $\Delta$ belong to $I_{0}$. 
\end{theo}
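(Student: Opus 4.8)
The plan is to deduce Theorem~\ref{thmglobalacc} from the ACC for log canonical thresholds (Theorem~\ref{thmacclct}) by a contradiction argument, run simultaneously with an induction on the dimension $n$. Assume the statement fails for some $n$ and some DCC set $I$. Because $I$ satisfies the DCC, failure of the conclusion produces a sequence of projective log canonical pairs $(X_{i},\Delta_{i})$ of dimension $n$ with coefficients of $\Delta_{i}$ in $I$ and $K_{X_{i}}+\Delta_{i}\equiv 0$, together with, for each $i$, a prime divisor $S_{i}$ occurring in $\Delta_{i}$ whose coefficient $b_{i}$ forms a strictly increasing sequence $b_{1}<b_{2}<\cdots$ with limit $b\in(0,1]$ (an infinite strictly decreasing sequence in $I$ is excluded by the DCC). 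The aim is to extract from this data a strictly increasing sequence of log canonical thresholds of nonzero divisors, each with coefficient $1$, with respect to log canonical pairs of dimension $n$ whose boundary coefficients lie in the DCC set $I\cup\{1\}$; this contradicts Theorem~\ref{thmacclct} applied with $I\cup\{1\}$ and $J=\{1\}$.

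First I would normalise the situation. After replacing $(X_{i},\Delta_{i})$ by a dlt model (Theorem~\ref{thmdltblowup}) we may assume $(X_{i},\Delta_{i})$ is $\mathbb{Q}$-factorial dlt, still with $K_{X_{i}}+\Delta_{i}\equiv 0$ and coefficients in $I\cup\{1\}$, and, since $b_{i}$ is strictly increasing, $b_{i}<1$. If $S_{i}$ meets some component $T_{i}$ of $\llcorner\Delta_{i}\lrcorner$, then adjunction produces an $(n-1)$-dimensional log canonical pair $(T_{i},\Delta_{T_{i}})$ with $K_{T_{i}}+\Delta_{T_{i}}=(K_{X_{i}}+\Delta_{i})|_{T_{i}}\equiv 0$ whose coefficients lie in a DCC set depending only on $I$ (coming from the different formula); the induction hypothesis in dimension $n-1$ forces these coefficients into a finite set, and the standard DCC analysis of the different coefficients then pins $b_{i}$ down to a finite set, a contradiction. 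Hence we may assume $S_{i}$ does not meet $\llcorner\Delta_{i}\lrcorner$.

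Now set $\Theta_{i}=\Delta_{i}-b_{i}S_{i}\ge 0$, so that $(X_{i},\Theta_{i})$ is log canonical with coefficients in $I\cup\{1\}$ and $c_{i}:=\operatorname{lct}(X_{i},\Theta_{i};S_{i})\ge b_{i}$. If $c_{i}=b_{i}$ for infinitely many $i$, then $\{c_{i}\}$ is the desired strictly increasing sequence of log canonical thresholds and Theorem~\ref{thmacclct} gives the contradiction. Otherwise $c_{i}>b_{i}$, and then $(X_{i},\Theta_{i}+c_{i}S_{i})$ is log canonical with $K_{X_{i}}+\Theta_{i}+c_{i}S_{i}\equiv(c_{i}-b_{i})S_{i}$ effective; running a $(K_{X_{i}}+\Theta_{i}+c_{i}S_{i})$-MMP we reach a model on which $(c_{i}-b_{i})S_{i}$ is semi-ample, defining a contraction $g_{i}\colon X_{i}'\to Y_{i}$. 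If $\dim Y_{i}>0$, restricting to a general fibre of $g_{i}$ yields, using that the transform of $S_{i}$ avoids this fibre, a log canonical pair of dimension $<n$ with numerically trivial log canonical divisor that still carries the bad coefficient $b_{i}$, contradicting the induction hypothesis. If $\dim Y_{i}=0$, then $(c_{i}-b_{i})S_{i}\equiv 0$, so the MMP must have contracted the transform of $S_{i}$, and tracing through the extremal contraction that does so exhibits $b_{i}$ as a log canonical threshold on a model of smaller Picard number whose remaining coefficients lie in $I\cup\{1\}$, again contradicting Theorem~\ref{thmacclct}.

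The step I expect to be the main obstacle is the last one: controlling the ``gap'' $c_{i}-b_{i}$ and extracting genuine log‑canonical‑threshold information from the minimal model program. Making this rigorous in the style of Hacon--M\textsuperscript{c}Kernan--Xu requires, beyond Theorem~\ref{thmacclct}, the DCC for volumes of log canonical pairs with ample log canonical divisor and the log birational boundedness statements that are proved in tandem with the ACC for log canonical thresholds, together with the termination results needed for the minimal model programs used above; in a self-contained treatment these would have to be established by a simultaneous induction on the dimension alongside Theorem~\ref{thmglobalacc} itself.
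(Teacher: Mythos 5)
This statement is not proved in the paper at all: it is quoted from Hacon--M\textsuperscript{c}Kernan--Xu \cite[Theorem 1.5]{hmx-acc} and used as a black box (e.g.\ in Step 2 of the proof of Proposition \ref{propcase2}). So there is no proof in the paper to compare your argument against; what follows is an assessment of your sketch on its own terms.

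Your overall strategy --- induction on $n$, adjunction to $\llcorner\Delta\lrcorner$ in the non-klt case, and an MMP combined with Theorem \ref{thmacclct} in the remaining case --- is indeed the shape of the argument in \cite{hmx-acc}, but the sketch has concrete gaps where the real work lies. First, and most seriously, you run a $(K_{X_i}+\Theta_i+c_iS_i)$-MMP and assert that you ``reach a model on which $(c_i-b_i)S_i$ is semi-ample.'' The pair $(X_i,\Theta_i+c_iS_i)$ is log canonical but not klt, and its log canonical divisor is numerically equivalent to an effective divisor; the existence of a good minimal model for such a pair --- in particular the semi-ampleness you invoke --- is exactly the open problem (Conjecture \ref{conjminimalmodel}) that this paper is chipping away at, so in dimension $n\geq 5$ this step is unavailable and the argument becomes circular. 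The proof in \cite{hmx-acc} instead runs the $(K_{X_i}+\Delta_i-b_iS_i)$-MMP, whose log canonical divisor is numerically equivalent to $-b_iS_i$ and hence \emph{not} pseudo-effective, so that \cite{bchm} guarantees termination with a Mori fiber space. Second, your treatment of the case $\dim Y_i>0$ is internally inconsistent: you say the transform of $S_i$ \emph{avoids} the general fibre and yet that the restricted pair ``still carries the bad coefficient $b_i$''; both cannot hold, and the point of the Mori-fiber-space reduction is precisely that $S_i$ is \emph{positive on the contracted ray}, hence horizontal, so that $b_i$ survives as a coefficient on the lower-dimensional fibre. Third, ``tracing through the extremal contraction that does so exhibits $b_i$ as a log canonical threshold'' is not an argument; no mechanism is given that produces $b_i$ (as opposed to $c_i$) as a log canonical threshold. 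Finally, your closing caveat is accurate and decisive: Theorem \ref{thmglobalacc} cannot be derived from Theorem \ref{thmacclct} alone --- in \cite{hmx-acc} the two statements are proved by a simultaneous induction together with the DCC for volumes and log birational boundedness, neither of which you have at your disposal here. As written, the proposal is a reasonable roadmap of the HMX strategy, not a proof.
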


Finally we introduce the definition of log smooth models and two related results. 
Corollary \ref{cordltblowup} is a special kind of dlt blow-up used in this paper.

\begin{defi}[Log smooth models, cf.~{\cite[Definition 2.3]{birkar-flip}} and {\cite[Remark 2.8]{birkar-flip}}]\label{defnlogsmoothmodel}
Let $(X,\Delta)$ be a log canonical pair and $f:Y \to X$ be a log resolution of $(X,\Delta)$. 
Let $\Gamma$ be a boundary $\mathbb{R}$-divisor on $Y$ such that $(Y,\Gamma)$ is log smooth. 
Then $(Y,\Gamma)$ is a {\it log smooth model} of $(X,\Delta)$ if we write 
$$K_{Y}+\Gamma=f^{*}(K_{X}+\Delta)+F, $$
then
\begin{enumerate}
\item[(i)]
$F$ is an effective $f$-exceptional divisor, and 
\item[(ii)] 
every $f$-exceptional prime divisor $E$ satisfying $a(E,X,\Delta)>-1$ is a component of $F$ and $\Gamma-\llcorner \Gamma \lrcorner$.  
\end{enumerate}
By the definition, ${\rm Supp}\,\Gamma ={\rm Supp}\,f_{*}^{-1}\Delta \cup {\rm Ex}\,(f)$ and the image of any lc center of $(Y,\Gamma)$ on $X$ is an lc center of $(X,\Delta)$. 
For any $f$-exceptional prime divisor $E$, $E$ is a component of $F$ if and only if $a(E,X,\Delta)>-1$. 
When $\Delta$ is a $\mathbb{Q}$-divisor and $f:Y \to X$ is a log resolution of $(X,\Delta)$, we can find a $\mathbb{Q}$-divisor $\Gamma$ on $Y$ such that $(Y,\Gamma)$ is a log smooth model of $(X,\Delta)$. 
\end{defi}

\begin{lemm}\label{lemlogresol}
Let $\pi:X \to Z$ be a projective morphism from a normal variety to a variety. 
Let $(X,\Delta)$ be a log canonical pair. 
Then there is a log smooth model $(Y,\Gamma)$ of $(X,\Delta)$ such that 
\begin{enumerate}
\item[(i)]
$\Gamma=\Gamma'+\Gamma''$, where $\Gamma'\geq0$ and $\Gamma''$ is a reduced divisor,
\item[(ii)]
$(\pi \circ f)({\rm Supp}\,\Gamma'') \subsetneq Z$, and 
\item[(iii)]
every lc center of $(Y,\Gamma-t\Gamma'')$ dominates $Z$ for any $0< t \leq 1$.
\end{enumerate} 
\end{lemm}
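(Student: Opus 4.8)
The plan is to produce the model $(Y,\Gamma)$ in two stages: first arrange condition (ii) by a suitable log resolution, then arrange condition (iii) by modifying the coefficient-one part of $\Gamma$ in a way compatible with being a log smooth model. I would begin with a log resolution $f\colon Y\to X$ of $(X,\Delta)$ that simultaneously resolves the generic fiber of $\pi$ in the sense that $f$ also factors through a log resolution of the union of $\Delta$ with the non-dominant part of the exceptional structure; concretely, choose $f$ so that $\mathrm{Supp}\,f_*^{-1}\Delta\cup\mathrm{Ex}(f)$ is simple normal crossing and every stratum that does not dominate $Z$ is itself an irreducible component of some prime divisor in this support. By Definition \ref{defnlogsmoothmodel} (last sentence) we may then pick a $\mathbb{Q}$-divisor $\Gamma_0$ on $Y$ making $(Y,\Gamma_0)$ a log smooth model of $(X,\Delta)$; in particular $\mathrm{Supp}\,\Gamma_0=\mathrm{Supp}\,f_*^{-1}\Delta\cup\mathrm{Ex}(f)$ and the image of every lc center of $(Y,\Gamma_0)$ on $X$ is an lc center of $(X,\Delta)$.

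Next I would split $\Gamma_0=\Gamma'_0+\Gamma''_0$, where $\Gamma''_0$ is the sum of those prime components of $\llcorner\Gamma_0\lrcorner$ whose image in $Z$ is a proper subset, and $\Gamma'_0$ is everything else. Then $(\pi\circ f)(\mathrm{Supp}\,\Gamma''_0)\subsetneq Z$ gives (ii), and $\Gamma'_0\geq 0$, $\Gamma''_0$ reduced gives (i). The remaining point is (iii): an lc center of $(Y,\Gamma_0-t\Gamma''_0)$ for $0<t\le 1$ is a stratum of the simple normal crossing divisor $\llcorner\Gamma_0-t\Gamma''_0\lrcorner=\llcorner\Gamma_0\lrcorner-\Gamma''_0$ together with the non-klt behaviour coming from coefficients equal to $1$ in $\Gamma'_0$. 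The strata that could fail to dominate $Z$ are exactly those contained in some non-dominant component; by removing $\Gamma''_0$ from the boundary we have killed all non-dominant coefficient-one divisors, but intersections of dominant coefficient-one components could still be non-dominant. To handle this I would further blow up: whenever a stratum $W$ of the coefficient-one part of $\Gamma'_0$ fails to dominate $Z$, blow up along (the closure of) such $W$, which replaces it by an exceptional divisor; the discrepancy of that divisor with respect to $(X,\Delta)$ is $-1$ (since $W$ is an lc center whose image is an lc center of $(X,\Delta)$), so after putting coefficient $1$ on it and re-splitting, it joins $\Gamma''$. Iterating and using that the number of strata strictly drops (Noetherian induction on the stratification of the non-dominant locus), this process terminates, and in the resulting log smooth model every lc center of $(Y,\Gamma-t\Gamma'')$ dominates $Z$ for all $0<t\le 1$: removing a positive multiple of $\Gamma''$ only shrinks the non-klt locus of $(Y,\Gamma)$ further, and its remaining lc centers are the dominant strata.

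The main obstacle is the bookkeeping in this last step: one must check that the blow-ups can be performed while keeping the total space a log smooth model of $(X,\Delta)$ (conditions (i) and (ii) of Definition \ref{defnlogsmoothmodel}, especially that $F$ stays effective and picks up every new exceptional divisor of discrepancy $>-1$), that passing from $\Gamma_0$ to the new boundary does not create new non-dominant coefficient-one divisors outside $\Gamma''$, and that the iteration genuinely terminates — for which the right invariant is the minimal dimension, and then the number, of non-dominant lc centers of the coefficient-one part. Once termination is in hand, verifying (iii) is immediate from the observation that for $0<t\le 1$ the pair $(Y,\Gamma-t\Gamma'')$ has $\llcorner\Gamma-t\Gamma''\lrcorner=\llcorner\Gamma\lrcorner-\Gamma''$, whose strata are all dominant by construction, and that no lc center can be supported in the non-reduced part since that part has simple normal crossing support with all coefficients $<1$.
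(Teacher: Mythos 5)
Your proof is essentially correct, but it follows a genuinely different route from the paper's, so a comparison is worthwhile. The paper likewise reduces to the case where $(X,\Delta)$ is log smooth and takes $\Gamma''$ to be the sum of the non-dominant coefficient-one components of $\llcorner \Gamma\lrcorner$, but it avoids your iteration by a one-shot device performed \emph{downstairs}: before constructing $Y$, it blows up $X$ along every lc center $S$ of $(X,\Delta)$ not dominating $Z$, and chooses the log resolution $f\colon Y\to X$ to dominate all of these blow-ups. Then for any non-dominant lc center $T$ of $(Y,\Gamma)$, the preimage $f^{-1}\bigl(f(T)\bigr)$ is the support of the pullback of a Cartier divisor (the exceptional divisor of the blow-up along $f(T)$), hence is a divisor, non-dominant and contained in ${\rm Supp}\,\Gamma$; the simple normal crossing condition then forces the component of $\Gamma$ containing $T$ to have coefficient one, so it lies in $\Gamma''$ and $T$ cannot be an lc center of $(Y,\Gamma-t\Gamma'')$. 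This is the correct form of the condition you try to impose in your first paragraph (as written, a stratum of codimension at least two cannot be ``an irreducible component of some prime divisor''), and it eliminates all the bookkeeping you flag: no termination argument and no re-verification of the log smooth model axioms after each step. Your iterative upstairs version does work --- blowing up a non-dominant stratum $W$ of the dominant coefficient-one part, with exceptional divisor $E_{W}$, destroys that stratum without creating new ones among the strict transforms (a component of an intersection of strict transforms lying inside $E_{W}$ would have the wrong codimension), so the number of offending strata strictly drops --- but one justification needs tightening: the equality $a(E_{W},X,\Delta)=-1$ does not follow merely from $f(W)$ being an lc center of $(X,\Delta)$; you need $W\not\subset{\rm Supp}\,F$, which holds because the components of $F$ have coefficient $<1$ in $\Gamma$ and therefore, by the simple normal crossing condition, cannot contain an irreducible component of an intersection of coefficient-one components. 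With that in place, if $g$ denotes the blow-up, then $K_{Y_{1}}+\Gamma_{1}=g^{*}(K_{Y}+\Gamma)$ and the new comparison divisor equals $g^{*}F$, still effective and exceptional, so the blown-up pair remains a log smooth model of $(X,\Delta)$ and your induction closes.
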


\begin{proof}
Replacing $(X,\Delta)$ with its log smooth model, we can assume that $(X,\Delta)$ is log smooth. 
For any lc center $S$ of $(X,\Delta)$ not dominating $Z$, let $\pi_{S}:X_{S} \to X$ be the blow-up of $X$ along $S$.  
Then $X_{S}$ is a smooth variety and $\pi_{S}^{-1}(S)$ is a divisor on $X_{S}$. 
In particular it is a Cartier divisor on $X_{S}$. 
Let $f:Y\to X$ be a log resolution of $(X,\Delta)$ such that $Y$ is also a common resolution of all $X_{S}$, and construct a log smooth model $(Y,\Gamma)$ of $(X,\Delta)$. 
Let $\Gamma''$ be the reduced divisor such that $\Gamma''$ is the sum of all components of $\llcorner \Gamma \lrcorner$ not dominating $Z$, and set $\Gamma'=\Gamma-\Gamma''$.   
Then $\Gamma'$ and $\Gamma''$ satisfy the conditions (i) and (ii) of the lemma. 
We prove that $\Gamma'$ and $\Gamma''$ satisfy the condition (iii) of the lemma.

Fix $0<t\leq 1$ and let $T$ be an lc center of $(Y,\Gamma-t\Gamma'')$. 
Since $(Y,\Gamma)$ is lc, $T$ is an lc center of $(Y,\Gamma)$ and $T$ is not contained in ${\rm Supp}\, \Gamma''$. 
We prove that $T$ dominates $Z$.

Suppose by contradiction that $T$ does not dominate $Z$. 
Then $f(T)$ is an lc center of $(X,\Delta)$ not dominating $Z$ and therefore $\pi_{f(T)}^{-1}(f(T))$ is a Cartier divisor on $X_{f(T)}$ by construction. 
Set $M=f^{-1}(f(T))$. 
Clearly we have $(\pi \circ f)(M)\subsetneq Z$, and $M$ is a divisor because $M$ is the support of the pullback of $\pi_{f(T)}^{-1}(f(T))$. 
Moreover $T$ is contained in a component of $M$ because $T$ is irreducible. 
Since ${\rm Supp}\,\Gamma={\rm Supp}\,f_{*}^{-1}\Delta \, \cup \, {\rm Ex}\,(f)$, we also have $M \subset {\rm Supp}\,\Gamma$. 
Therefore $T$ is contained in a component $G$ of $\Gamma$ such that $(\pi \circ f)(G)\subsetneq Z$.
On the other hand, $T$ is an irreducible component of the intersection of some divisors in 
$\llcorner \Gamma \lrcorner$ because $T$ is an lc center of the log smooth model $(Y,\Gamma)$. 
Since $(Y,\Gamma)$ is log smooth, the coefficient of $G$ in $\Gamma$ is one. 
Then $T$ is contained in ${\rm Supp}\,\Gamma''$ and we get a contradiction. 
Therefore $T$ dominates $Z$ and so we are done. 
\end{proof}

Let $\pi:X \to Z$ be a projective morphism from a normal variety to a variety and $(X,\Delta)$ be a log canonical pair. 
In the rest of this paper, the phrase \lq\lq $(X,\Delta=\Delta'+\Delta'')$ satisfies all the conditions of Lemma \ref{lemlogresol} with respect to $\pi$\rq\rq \;means that we can write $\Delta=\Delta'+\Delta''$ where $\Delta'$ and $\Delta''$ satisfy the conditions (i), (ii) and (iii) of Lemma \ref{lemlogresol} with respect to $\pi$.

\begin{coro}\label{cordltblowup}
Let $\pi:X \to Z$ be a projective morphism of normal quasi-projective varieties and $(X,\Delta)$ be a log canonical pair. 
Then there is a dlt blow-up $f:(Y,\Gamma) \to (X,\Delta)$ such that $(Y,\Gamma=\Gamma'+\Gamma'')$ satisfies all the conditions of Lemma \ref{lemlogresol} with respect to $\pi\circ f$. 
\end{coro}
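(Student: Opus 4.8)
The plan is to reduce Corollary \ref{cordltblowup} to Lemma \ref{lemlogresol} combined with Theorem \ref{thmdltblowup}. First I would apply Lemma \ref{lemlogresol} to $\pi : X \to Z$ and $(X,\Delta)$ to obtain a log smooth model $g : (Y_{0},\Gamma_{0}) \to (X,\Delta)$ with $\Gamma_{0}=\Gamma_{0}'+\Gamma_{0}''$ satisfying conditions (i)--(iii) of that lemma with respect to $\pi\circ g$. The point of passing to a log smooth model is that every $g$-exceptional prime divisor $E$ with $a(E,X,\Delta)>-1$ appears in $\Gamma_{0}$ with coefficient one (condition (ii) in Definition \ref{defnlogsmoothmodel}, together with the fact recorded there that $E$ is a component of $F$ and of $\Gamma_{0}-\llcorner\Gamma_{0}\lrcorner$ precisely when $a(E,X,\Delta)>-1$). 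However $(Y_{0},\Gamma_{0})$ need not satisfy $K_{Y_{0}}+\Gamma_{0}=g^{*}(K_{X}+\Delta)$, since in general $K_{Y_{0}}+\Gamma_{0}=g^{*}(K_{X}+\Delta)+F$ with $F\geq 0$ and $g$-exceptional; so $(Y_{0},\Gamma_{0})$ is not yet a dlt blow-up.

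The second step is to run a relative MMP to contract $F$. Concretely, I would take a dlt model of $(Y_{0},\Gamma_{0})$ in the sense of Theorem \ref{thmdltblowup}, or more directly run the $(K_{Y_{0}}+\Gamma_{0})$-MMP over $X$; since $-F$ is the pullback-adjusted class and $F$ is effective and $g$-exceptional, the standard argument (the same one used to prove Theorem \ref{thmdltblowup} from the MMP) shows this MMP contracts exactly $\operatorname{Supp} F$ and terminates with a model $f : Y \to X$ on which $K_{Y}+\Gamma = f^{*}(K_{X}+\Delta)$, where $\Gamma$ is the birational transform of $\Gamma_{0}$ plus the reduced sum of the $f$-exceptional divisors, $Y$ is $\mathbb{Q}$-factorial, and $(Y,\Gamma)$ is dlt. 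Thus $f:(Y,\Gamma)\to(X,\Delta)$ is a dlt blow-up. It remains to check that writing $\Gamma=\Gamma'+\Gamma''$, where $\Gamma''$ is the birational transform of $\Gamma_{0}''$ (equivalently, the sum of the components of $\llcorner\Gamma\lrcorner$ not dominating $Z$), conditions (i)--(iii) of Lemma \ref{lemlogresol} persist with respect to $\pi\circ f$.

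For conditions (i) and (ii) this is immediate: the MMP steps are isomorphisms in codimension one over the generic point of $Z$ on the locus where everything dominates $Z$, and no divisor not already dominating $Z$ is created, so $(\pi\circ f)(\operatorname{Supp}\Gamma'')\subsetneq Z$ still holds and $\Gamma'=\Gamma-\Gamma''\geq 0$. Condition (iii) is the step I expect to require the most care. I need that every lc center of $(Y,\Gamma-t\Gamma'')$ dominates $Z$ for $0<t\leq 1$. Because $(Y,\Gamma)$ is dlt but possibly not log smooth, I cannot argue via intersections of components as in Lemma \ref{lemlogresol}; instead I would use the discrepancy comparison. An lc center $S$ of $(Y,\Gamma-t\Gamma'')$ is an lc center of $(Y,\Gamma)$ not meeting $\operatorname{Supp}\Gamma''$; since $K_{Y}+\Gamma=f^{*}(K_{X}+\Delta)$, any divisor $E$ over $Y$ with $a(E,Y,\Gamma)=-1$ satisfies $a(E,X,\Delta)=-1$, so lc centers of $(Y,\Gamma)$ map to lc centers of $(X,\Delta)$; comparing with the log smooth model $(Y_{0},\Gamma_{0})$ — whose lc centers do map to lc centers of $(X,\Delta)$ and, by construction in Lemma \ref{lemlogresol}, whose lc centers disjoint from $\Gamma_{0}''$ dominate $Z$ — and using that the MMP $Y_{0}\dashrightarrow Y$ is a birational contraction preserving the crepant structure, I get that every lc center of $(Y,\Gamma)$ is the image of an lc center of $(Y_{0},\Gamma_{0})$, hence an lc center of $(Y,\Gamma)$ disjoint from $\Gamma''$ arises from an lc center of $(Y_{0},\Gamma_{0})$ disjoint from $\Gamma_{0}''$, which dominates $Z$. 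Tracking that lc centers survive and do not "jump" onto $\operatorname{Supp}\Gamma''$ under the MMP is the main obstacle; the cleanest way to handle it is to invoke that on a dlt pair the lc centers are exactly the images of the strata over the log smooth model, so condition (iii) for $(Y_{0},\Gamma_{0})$ transfers directly.
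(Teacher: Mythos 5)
Your proposal is correct and follows essentially the same route as the paper: take the log smooth model $(Y_{0},\Gamma_{0})$ from Lemma \ref{lemlogresol}, run the $(K_{Y_{0}}+\Gamma_{0})$-MMP over $X$ to a good minimal model (which contracts the effective exceptional divisor $F$ and yields a dlt blow-up), and then transfer conditions (i)--(iii) to the resulting model. The paper disposes of your ``main obstacle'' (condition (iii)) in one line via the discrepancy inequality $a(D,Y_{0},\Gamma_{0}-t\Gamma_{0}'')\leq a(D,Y,\Gamma-t\Gamma'')$ for all divisors $D$ over $Y_{0}$, which shows every lc place of $(Y,\Gamma-t\Gamma'')$ is already an lc place of $(Y_{0},\Gamma_{0}-t\Gamma_{0}'')$ and hence has center dominating $Z$ --- the same idea you sketch, just stated more directly.
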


\begin{proof}
Let $(Y,\Gamma=\Gamma'+\Gamma'')\to (X,\Delta)$ be a log smooth model of $(X,\Delta)$ as in Lemma \ref{lemlogresol}. 
We run the $(K_{Y}+\Gamma)$-MMP over $X$ with scaling. 
By \cite[Theorem 3.4]{birkar-flip}, we get a good minimal model $\phi:(Y,\Gamma)\dashrightarrow(Y',\Gamma_{Y'})$ over $X$. 
Let $f:Y' \to X$ be the induced morphism. 
Then $f$ is a dlt blow-up of $(X,\Delta)$. 
Set $\Gamma_{Y'}'=\phi_{*}\Gamma'$ and $\Gamma_{Y'}''=\phi_{*}\Gamma''$. 
Then we can check that $(Y,  \Gamma_{Y'}=\Gamma_{Y'}'+\Gamma_{Y'}'')$ satisfies all the conditions of Lemma \ref{lemlogresol} with respect to $\pi \circ f$ because $a(D,Y,\Gamma-t\Gamma'')\leq a(D,Y',\Gamma_{Y'}-t\Gamma''_{Y'})$ for any sufficiently small positive real number $t$ and any prime divisor $D$ over $Y$. 
Therefore $f:(Y',\Gamma_{Y'}) \to (X,\Delta)$ is the desired dlt blow-up. 
\end{proof}

\section{Minimal model program}\label{secmmp}

In this section we study the Minimal Model Program for any $\mathbb{R}$-Cartier $\mathbb{R}$-divisor $D$ which is not necessarily a log canonical divisor. 
More precisely, we define a sequence of birational maps, which we call $D$-MMP, and construct the $D$-MMP under some assumptions. 

\begin{defi}[The $D$-MMP]\label{defnd-mmp}
Let $X$ be a $\mathbb{Q}$-factorial normal projective variety and let $D$ be an $\mathbb{R}$-divisor on $X$.
Then a finite sequence of birational maps
$$\phi:X=X_{0}\dashrightarrow X_{1}\dashrightarrow \cdots \dashrightarrow X_{n}$$
is a sequence of finitely many steps of the $D${\it -Minimal Model Program} ($D${\it -MMP}, for short) if 
\begin{enumerate}
\item[(i)]
there exists a boundary $\mathbb{R}$-divisor $\Delta$ on $X$ such that $(X,\Delta)$ is log canonical and $\phi$ is a sequence of finitely many steps of the $(K_{X}+\Delta)$-MMP, and 
\item[(ii)]
for any $0\leq i<n$, the birational transform $D_{X_{i}}$ of $D$ on $X_{i}$, which is always $\mathbb{R}$-Cartier by the condition (i), is anti-ample with respect to the extremal contraction $f_{i}:X_{i} \to V_{i}$, that is, $X_{i+1}=V_{i}$ or $X_{i+1}$ is the flip of $f_{i}$. 
\end{enumerate} 
An infinite sequence of birational maps
$$X=X_{0}\dashrightarrow X_{1}\dashrightarrow \cdots \dashrightarrow X_{i}\dashrightarrow \cdots$$
is a sequence of steps of the $D${\it -MMP} if $X\dashrightarrow X_{i}$ is a sequence of finitely many steps of the $D$-MMP for any $i$.
\end{defi}

\begin{rema}\label{remdefnd-mmp}
Our definition of $D$-MMP is slightly different from usual one because we assume (i), that is, any sequence of finitely many steps of the $D$-MMP is always the log MMP for a log canonical divisor.

By the definition, all $X_{i}$ are $\mathbb{Q}$-factorial. 
In (i) of the above definition, we can in fact find a boundary $\mathbb{Q}$-divisor instead of a boundary $\mathbb{R}$-divisor. 

Notation as above, suppose that $D$ is a $\mathbb{Q}$-divisor and let $X_{i}\dashrightarrow X_{i+1}$ be a step of the $D$-MMP. 
Then it is a step of the $(K_{X}+\Delta)$-MMP for some $\mathbb{Q}$-divisor $\Delta$. 
Let $X_{i}\to V_{i}$ be the extremal contraction. 
Then we can write $X_{i+1}={\boldsymbol{\rm Proj}}\bigl(\mathcal{R}(X_{i}/V_{i},D_{X_{i}})\bigr)$. 
Indeed, we can check that $X_{i+1}={\boldsymbol{\rm Proj}}\bigl(\mathcal{R}(X_{i}/V_{i},K_{X_{i}}+\Delta_{X_{i}})\bigr)$ even if $X_{i} \to V_{i}$ is a divisorial contraction. 
By the cone theorem \cite[Theorem 4.5.2]{fujino-book} and since $D_{X_{i}}$ is anti-ample over $V_{i}$, $D_{X_{i}} \sim_{\mathbb{Q},\,V_{i}}m(K_{X_{i}}+\Delta_{X_{i}})$ for some positive rational number $m$. 
Thus $X_{i+1}\simeq{\boldsymbol{\rm Proj}}\bigl(\mathcal{R}(X_{i}/V_{i},D_{X_{i}})\bigr)$. 
\end{rema}

\begin{defi}[The $D$-MMP with scaling]\label{defnd-mmpwithscaling}
Let $X$ be a $\mathbb{Q}$-factorial normal projective variety and let $D$ be an $\mathbb{R}$-divisor on $X$.
Let $A$ be an $\mathbb{R}$-divisor such that $D+A$ is nef. 
Then a sequence of birational maps 
$$X=X_{0}\dashrightarrow X_{1}\dashrightarrow \cdots \dashrightarrow X_{i}\dashrightarrow\cdots$$
is the $D${\it -MMP with scaling of $A$} if 
\begin{enumerate}
\item[(i)]
it is a sequence of steps of the $D$-MMP, and
\item[(ii)]
if we set 
$$\lambda_{i}={\rm inf}\{\mu \in \mathbb{R}_{\geq0} \, | \, D_{X_{i}}+\mu A_{X_{i}}{\rm \;is \;nef}\}$$ 
for any $i$, then $D_{X_{i}}+\lambda_{i}A_{X_{i}}$ is trivial with respect to the extremal contraction $X_{i}\to V_{i}$. 
\end{enumerate} 
If divisors $D$ and $A$ on $X$ are given and there is no confusion, we denote the $D$-MMP with scaling of $A$ by 
$$(X=X_{0},\lambda_{0})\dashrightarrow (X_{1},\lambda_{1})\dashrightarrow \cdots \dashrightarrow (X_{i},\lambda_{i})\dashrightarrow\cdots$$ 
where $\lambda_{i}={\rm inf}\{\mu \in \mathbb{R}_{\geq0} \,|\, D_{X_{i}}+\mu A_{X_{i}}{\rm \;is \;nef}\}$. 
\end{defi}

\begin{rema}\label{remd-mmp}
Notation as above, let 
$$(X=X_{0},\lambda_{0})\dashrightarrow (X_{1},\lambda_{1})\dashrightarrow \cdots \dashrightarrow (X_{i},\lambda_{i})\dashrightarrow\cdots$$ 
be a sequence of steps of the $D$-MMP with scaling of $A$. 
Pick an index $i\geq0$ and a real number $t<\lambda_{i}$, which is not necessarily positive.
By the definition of the $D$-MMP with scaling, the sequence of birational maps $X_{0}\dashrightarrow \cdots \dashrightarrow X_{i}\dashrightarrow X_{i+1}$ is a sequence of  finitely many steps of the $(D+tA)$-MMP with scaling of $(1-t)A$. 
If we set $\lambda'_{j}=(\lambda_{j}-t)/(1-t)$ for any $0 \leq j \leq i$, then the $(D+tA)$-MMP with scaling can be written
$$(X_{0},\lambda'_{0})\dashrightarrow  \cdots \dashrightarrow (X_{i},\lambda'_{i})\dashrightarrow X_{i+1}.$$ 
In particular, if $t<\lambda_{i}$ for any $i$, then the above sequence of birational maps is the $(D+tA)$-MMP with scaling of $(1-t)A$ 
$$(X_{0},\lambda'_{0})\dashrightarrow  \cdots \dashrightarrow (X_{i},\lambda'_{i})\dashrightarrow \cdots$$
where $\lambda'_{i}=(\lambda_{i}-t)/(1-t)$ for any $i$.  
\end{rema}

If $D$ is the log canonical divisor of a log canonical pair, we can identify the $D$-MMP with the standard log MMP on the log canonical pair. 
Therefore Definition \ref{defnd-mmp} is a generalization of the standard log MMP. 
Similarly, we can check that Definition \ref{defnd-mmpwithscaling} is a generalization of the standard definition of the log MMP with scaling.

Finally, we prove two results related to the $D$-MMP with scaling.

\begin{lemm}\label{lemd-mmp}
Let $\pi:X\to Z$ be a projective surjective  morphism from a normal projective variety to a $\mathbb{Q}$-factorial normal  projective variety with connected fibers, and let $(X,\Delta)$ be a log canonical pair such that $\Delta$ is a $\mathbb{Q}$-divisor. 
Suppose that $(X,\Delta=\Delta'+\Delta'')$ satisfies all the conditions of Lemma \ref{lemlogresol} with respect to $\pi$.
Suppose in addition that $K_{X}+\Delta \sim_{\mathbb{Q}}\pi^{*}D$ and $\Delta'' \sim_{\mathbb{Q}}\pi^{*}E$ for a $\mathbb{Q}$-divisor $D$ and an effective $\mathbb{Q}$-divisor $E$ on $Z$ respectively. 
Let $A$ be a big semi-ample $\mathbb{Q}$-divisor on $Z$ such that $A+E$ is also semi-ample and $D+A$ is nef.

Then there is a sequence of birational  maps of the $D$-MMP with scaling of $A$ 
$$(Z=Z_{0},\lambda_{0}) \dashrightarrow \cdots \dashrightarrow (Z_{i},\lambda_{i})\dashrightarrow \cdots$$
such that the $D$-MMP terminates or ${\rm lim}_{i \to \infty} \lambda_{i}=0$ when the $D$-MMP does not terminate. 
In particular, we always have ${\rm lim}_{i \to \infty} \lambda_{i}=0$ when $D$ is pseudo-effective.
\end{lemm}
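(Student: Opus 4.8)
The plan is to reduce the statement to a known termination / non-increasing-scalar result for the log MMP with scaling by translating the divisor $D$ into a log canonical divisor plus a bit of an ample divisor. Write $K_X+\Delta\sim_{\mathbb Q}\pi^*D$ and $\Delta''\sim_{\mathbb Q}\pi^*E$, so $K_X+\Delta'\sim_{\mathbb Q}\pi^*(D-E)$ and all lc centers of $(X,\Delta')$ dominate $Z$ (condition (iii) of Lemma \ref{lemlogresol} applied with $t=1$). The first step is to apply Ambro's canonical bundle formula to the klt-type situation coming from $(X,\Delta')$: since every lc center of $(X,\Delta')$ dominates $Z$ and $K_X+\Delta'$ is relatively trivial over $Z$, one obtains a boundary $\mathbb Q$-divisor $\Delta_Z$ on $Z$ such that $(Z,\Delta_Z)$ is klt and $K_X+\Delta'\sim_{\mathbb Q}\pi^*(K_Z+\Delta_Z)$; hence $D-E\sim_{\mathbb Q}K_Z+\Delta_Z$, i.e. $D\sim_{\mathbb Q}K_Z+\Delta_Z+E$ with $\Delta_Z+E$ a boundary $\mathbb Q$-divisor on $Z$ (shrinking coefficients to $\le 1$ if necessary, or observing $E$ is vertical so $\Delta''$ reduced forces the needed bound). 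Thus the $D$-MMP on $Z$ is literally the $(K_Z+\Delta_Z+E)$-MMP on the klt-type pair $(Z,\Delta_Z+E)$.

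Next I would set up the scaling. By hypothesis $A$ is big and semi-ample with $A+E$ semi-ample and $D+A$ nef; choose a small rational $\varepsilon>0$ and a general effective $\mathbb Q$-divisor $A'\sim_{\mathbb Q}\varepsilon(A+E)$ so that $(Z,\Delta_Z+E+A')$ is still log canonical (indeed klt since $A+E$ is semi-ample and big, so a general member avoids the non-klt locus), and $D+A\sim_{\mathbb Q}K_Z+\Delta_Z+E+A'+(A-A')$ with $A-A'$ still ample for small $\varepsilon$. After a further perturbation one arranges that $K_Z+\Delta_Z+E+C$ is nef for an effective $\mathbb Q$-divisor $C$ with $(Z,\Delta_Z+E+C)$ log canonical, and the $D$-MMP with scaling of $A$ coincides with the $(K_Z+\Delta_Z+E)$-MMP with scaling of $C$ in the sense of Definition \ref{defnmmpwithscaling}. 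Since $Z$ is $\mathbb Q$-factorial and $(Z,\Delta_Z+E)$ is log canonical, such an MMP with scaling exists: at each step one has a $(K_{Z_i}+\Delta_{Z_i}+E_{Z_i})$-negative extremal ray on which $D_{Z_i}+\lambda_i A_{Z_i}$ is trivial, and one performs the divisorial contraction or flip; existence of flips and the extremal ray is guaranteed by the results cited in Definition \ref{defnmmpwithscaling}. This produces the desired sequence $(Z_0,\lambda_0)\dashrightarrow(Z_1,\lambda_1)\dashrightarrow\cdots$.

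It remains to show the dichotomy: either the MMP terminates, or $\lambda_i\to 0$. Suppose the MMP does not terminate; then $\{\lambda_i\}$ is a non-increasing sequence of nonnegative reals, so $\lambda:=\lim_i\lambda_i$ exists, and I must show $\lambda=0$. If $\lambda>0$, then for every $i$ one has $t:=\lambda/2<\lambda_i$, so by Remark \ref{remd-mmp} the same sequence is a sequence of steps of the $(D+tA)$-MMP with scaling of $(1-t)A$; equivalently, it is the $(K_Z+\Delta_Z+E+t'A')$-MMP with scaling for a suitable log canonical pair with boundary coefficients in a fixed DCC (in fact finite) set. Now $D+tA\sim_{\mathbb Q}K_Z+\Delta_Z+E+($ ample$)$ is of the form "klt plus ample," so the corresponding adjoint divisor is big; by \cite{bchm} (finiteness of models / termination with scaling for klt pairs with big boundary) this MMP terminates, contradicting our assumption. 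Hence $\lambda=0$. Finally, when $D$ is pseudo-effective, the MMP cannot terminate with a Mori fiber contraction (the last $D_{Z_i}$ would be pseudo-effective yet anti-ample over a lower-dimensional base, impossible) and if it terminated with $K_{Z_i}+\cdots$ nef then all subsequent $\lambda_i=0$; either way $\lim\lambda_i=0$.

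The main obstacle I expect is the bookkeeping in the second and third steps: carefully choosing the perturbation $A'$ and the auxiliary boundary so that (a) the resulting pair on $Z$ stays log canonical (ideally klt) with boundary in a DCC set, (b) the $D$-MMP with scaling of $A$ is genuinely identified with a $(K_Z+\text{boundary})$-MMP with scaling in the precise sense of Definition \ref{defnmmpwithscaling} — including matching up which extremal rays are contracted and verifying the triviality condition (ii) there — and (c) the "klt plus ample" reduction is uniform enough along the MMP that one can invoke termination from \cite{bchm}. Once these identifications are in place the dichotomy follows formally from Remark \ref{remd-mmp} and the monotonicity of $\{\lambda_i\}$.
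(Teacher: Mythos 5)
Your overall strategy (translate $D$ into a klt log canonical divisor plus something ample via the canonical bundle formula, then quote termination with scaling from \cite[Corollary 1.4.2]{bchm}) is the right one and is close to the paper's, but your very first reduction contains a genuine gap. From \cite[Corollary 3.2]{fg-bundle} applied to $(X,\Delta')$ you correctly get a klt pair $(Z,\Delta_Z)$ with $D-E\sim_{\mathbb{Q}}K_Z+\Delta_Z$, but you then assert that $D\sim_{\mathbb{Q}}K_Z+\Delta_Z+E$ with $(Z,\Delta_Z+E)$ log canonical. Nothing in the hypotheses controls $E$: it is merely some effective $\mathbb{Q}$-divisor with $\Delta''\sim_{\mathbb{Q}}\pi^*E$, so its coefficients can exceed $1$ and $(Z,\Delta_Z+E)$ can fail to be lc; neither of the parenthetical fixes you offer works (``shrinking coefficients'' destroys the $\mathbb{Q}$-linear equivalence $D\sim_{\mathbb{Q}}K_Z+\Delta_Z+E$, and the reducedness of $\Delta''$ says nothing about $E$, which lives on $Z$ and is determined only up to $\mathbb{Q}$-linear equivalence). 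Consequently the claim that the $D$-MMP is ``literally'' the $(K_Z+\Delta_Z+E)$-MMP of a log canonical pair is unjustified, and with it both the existence of the required extremal rays and flips at each step and your appeal to \cite{bchm} in the dichotomy argument.

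This is exactly the point of the hypothesis that $A+E$ be semi-ample, which your argument never really uses. The paper's proof only ever realizes $D-aE$ (for $a>0$) as a klt log canonical divisor $K_Z+\Psi_a$, and restores the missing $aE$ by writing $D+(t+a)A=(D-aE)+a(A+E)+tA$ and replacing $a(A+E)$ by $aA'$ for a general member $A'$ of the big semi-ample system determined by $A+E$, so that $(Z,\Psi_a+aA'+tA'')$ is klt with big boundary. The price is that the klt pair depends on $a$, so one cannot run a single MMP: one fixes a sequence $a_n\downarrow 0$, runs the $(K_Z+\Psi_{a_n}+a_nA')$-MMP with scaling (which terminates by \cite[Corollary 1.4.2]{bchm}), checks via the displayed identity and Remark \ref{remd-mmp} that each run is a sequence of steps of the $D$-MMP with scaling of $A$ reaching $\lambda\leq a_n$, and concatenates these runs. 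This produces one particular sequence with $\lambda_{k_i}\leq a_i\to 0$, which is all the (existential) statement of the lemma requires. If you repair your first step along these lines, your contradiction argument for the dichotomy reduces to essentially the same bookkeeping.
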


\begin{lemm}\label{lemliftmmp}
Let $\pi:X\to Z$ be a projective surjective morphism of $\mathbb{Q}$-factorial normal projective varieties with connected fibers, and let $(X,\Delta)$ be a log canonical pair where $\Delta$ is a $\mathbb{Q}$-divisor. 
Suppose that $(X,0)$ is Kawamata log terminal and there is a $\mathbb{Q}$-divisor $D$ on $Z$ such that $K_{X}+\Delta \sim_{\mathbb{Q}}\pi^{*}D$. 
Let $A$ be an effective $\mathbb{Q}$-divisor on $Z$ such  that $D+A$ is nef and $(X,\Delta+\pi^{*}A)$ is log canonical. 
Suppose that there is a sequence of birational maps of the $D$-MMP with scaling of $A$
$$(Z=Z_{0},\lambda_{0}) \dashrightarrow \cdots \dashrightarrow (Z_{i},\lambda_{i})\dashrightarrow \cdots$$
with the corresponding numbers $\lambda_{i}$ defined in Definition \ref{defnd-mmpwithscaling}. 
We set $X_{0}=X$ and $\Delta_{X_{0}}=\Delta$. 

Then we have the following diagram 
$$
\xymatrix@C=21pt{
(X_{0},\Delta_{X_{0}}) \ar_{\pi=\pi_{0}}[d] \ar@{-->}[r]&
\cdots \ar@{-->}[r]&(X_{k_{1}},\Delta_{X_{k_{1}}})\ar_{\pi_{1}}[d] \ar@{-->}[r]&
\cdots\ar@{-->}[r]& (X_{k_{i}},\Delta_{X_{k_{i}}}) \ar_{\pi_{i}}[d] \ar@{-->}[r]&
\cdots \\
(Z_{0}, \lambda_{0}) \ar@{-->}[rr]&&
(Z_{1}, \lambda_{1}) \ar@{-->}[r]&
\cdots\ar@{-->}[r]&(Z_{i}, \lambda_{i})\ar@{-->}[r]&
\cdots
}
$$
such that 
\begin{enumerate}
\item[(i)]
for any $i$, $\pi_{i}$ is projective and surjective with connected fibers, 
\item[(ii)]
the upper horizontal sequence of birational maps is a sequence of steps of the $(K_{X}+\Delta)$-MMP with scaling of $\pi^{*}A$ such that 
if we set $k_{0}=0$ and 
$$\lambda'_{j}={\rm inf}\{\mu \in \mathbb{R}_{\geq0} \,|\, K_{X_{j}}+\Delta_{X_{j}}+\mu (\pi^{*}A)_{X_{j}}{\rm \;is \;nef}\},$$
where $(\pi^{*}A)_{X_{j}}$ is the birational transform of $\pi^{*}A$ on $X_{j}$, then $\lambda'_{j}=\lambda_{i}$ for any $k_{i}\leq j<k_{i+1}$, and
\item[(iii)]
for any two indices $i<i'$ and any $\mathbb{Q}$-divisor $B$ on $Z_{i}$, we have $(\pi_{i}^{*}B)_{X_{k_{i'}}}=\pi_{i'}^{*}B_{Z_{i'}}$.
\end{enumerate}
In particular, $K_{X_{k_{i}}}+\Delta_{X_{k_{i}}}\sim_{\mathbb{Q}}\pi_{i}^{*}D_{Z_{i}}$ for any $i$ and the $(K_{X}+\Delta)$-MMP with scaling of $\pi^{*}A$ terminates if and only if the $D$-MMP with scaling of $A$ terminates. 
\end{lemm}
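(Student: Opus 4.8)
The plan is to construct the upper horizontal sequence inductively, lifting each step of the $D$-MMP with scaling of $A$ downstairs to a (possibly long) sequence of steps of the $(K_X+\Delta)$-MMP with scaling of $\pi^*A$ upstairs, using Lemma \ref{lemd-mmp}-type termination inputs together with the canonical bundle formula machinery already available (the pair $(X,0)$ is klt, so $K_X+\Delta\sim_{\mathbb{Q}}\pi^*D$ lets us run the $(K_X+\Delta)$-MMP as a $\pi^*D$-MMP). First I would set up the induction: assume we have built $\pi_i:(X_{k_i},\Delta_{X_{k_i}})\to Z_i$ projective surjective with connected fibers, with $K_{X_{k_i}}+\Delta_{X_{k_i}}\sim_{\mathbb{Q}}\pi_i^*D_{Z_i}$ and with the scaling numbers matching up through index $i$. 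To produce the step $Z_i\dashrightarrow Z_{i+1}$: if $\lambda_i=0$ we are done (the downstairs MMP has terminated, and by the matching of scaling numbers the upstairs one is nef, hence terminated as well, after finitely many more steps); otherwise there is a $(K_{X_{k_i}}+\Delta_{X_{k_i}})$-negative extremal ray upstairs. The key point is that $K_{X_{k_i}}+\Delta_{X_{k_i}}+\lambda_i(\pi^*A)_{X_{k_i}}=\pi_i^*(D_{Z_i}+\lambda_i A_{Z_i})$ is the pullback of a nef divisor, and contracting/flipping the extremal rays on which it is trivial — which are exactly the rays lying over the corresponding extremal ray/contraction $Z_i\to V_i$ downstairs — is a sequence of steps of the $(K_{X_{k_i}}+\Delta_{X_{k_i}})$-MMP over $V_i$, or equivalently a $(K+\Delta)$-MMP with scaling in which $\lambda'$ stays equal to $\lambda_i$.

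The heart of the argument is then: run the $(K_{X_{k_i}}+\Delta_{X_{k_i}})$-MMP over $V_i$ (the variety obtained from $Z_i$ by the extremal contraction downstairs) with scaling of $\pi^*A$; this MMP terminates — here I would invoke the termination results underlying Lemma \ref{lemd-mmp} / the finiteness of the $D$-MMP with scaling over $Z$ in the relevant situation, applied relatively over $V_i$ — reaching a good minimal model $X'$ of $(X_{k_i},\Delta_{X_{k_i}})$ over $V_i$. Because $K_{X_{k_i}}+\Delta_{X_{k_i}}\sim_{\mathbb{Q}}\pi_i^*D_{Z_i}$ and $D_{Z_i}$ is anti-ample over $V_i$ (it is the thing being contracted downstairs), this relative good minimal model is precisely $X'={\boldsymbol{\rm Proj}}$ of the relative canonical ring over $V_i$, which by the same $\mathbb{Q}$-linear equivalence equals ${\boldsymbol{\rm Proj}}$ of $\mathcal R(Z_i/V_i,D_{Z_i})$ pulled back — i.e.\ $X'=X_{k_i}\times_{Z_i}Z_{i+1}$ normalized, giving a projective surjective morphism $\pi_{i+1}:X'=X_{k_{i+1}}\to Z_{i+1}$ with connected fibers (Stein factorization / the fibers of $X_{k_i}\to Z_i$ are connected and the base change is along a birational contraction $Z_i\dashrightarrow Z_{i+1}$). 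Property (iii) about pullbacks of $\mathbb{Q}$-divisors is a formal consequence of the construction being a relative $\boldsymbol{\rm Proj}$ over $V_i$ (so $\pi_{i+1}^*B_{Z_{i+1}}$ is the birational transform of $\pi_i^*B$, for $B$ on $Z_i$), and the general two-index case follows by composing; (ii), the matching $\lambda'_j=\lambda_i$ for $k_i\le j<k_{i+1}$, holds because the entire intermediate MMP was run over $V_i$, where $K_{X_{k_i}}+\Delta_{X_{k_i}}+\lambda_i(\pi^*A)_{X_{k_i}}$ is trivial, so the scaling constant cannot drop until we pass to the next stage.

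For the final two assertions: $K_{X_{k_i}}+\Delta_{X_{k_i}}\sim_{\mathbb{Q}}\pi_i^*D_{Z_i}$ for all $i$ follows by induction, since each stage is a relative $\boldsymbol{\rm Proj}$ construction over $V_i$ compatible with the downstairs step $Z_i\dashrightarrow Z_{i+1}$ (apply (iii) with $B=D_{Z_i}$), and the birational transform of $\pi_i^*D_{Z_i}$ on $X_{k_{i+1}}$ is $\pi_{i+1}^*D_{Z_{i+1}}$. The equivalence of termination is then immediate: the upstairs MMP breaks into the blocks $X_{k_i}\dashrightarrow\cdots\dashrightarrow X_{k_{i+1}}$, each of finite length, and it terminates iff only finitely many blocks are nonempty iff the downstairs MMP has finitely many nontrivial steps.

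The step I expect to be the main obstacle is the termination of the intermediate $(K_{X_{k_i}}+\Delta_{X_{k_i}})$-MMP over $V_i$ with scaling, and simultaneously verifying that the resulting relative good minimal model $X_{k_{i+1}}$ really is the fiber product $X_{k_i}\times_{Z_i}Z_{i+1}$ (normalized) rather than something that merely maps to $Z_{i+1}$. The cleanest route is to observe that over $V_i$ we have $K_{X_{k_i}}+\Delta_{X_{k_i}}\sim_{\mathbb{Q},V_i}m(K_{X_{k_i}}+\Delta_{X_{k_i}})$ trivially absorbed into the anti-ampleness of $D_{Z_i}/V_i$, so the $(K+\Delta)$-MMP over $V_i$ is literally a $\pi_i^*(D_{Z_i})$-MMP over $V_i$, and one applies Remark \ref{remdefnd-mmp} (the identification $X_{i+1}={\boldsymbol{\rm Proj}}\,\mathcal R(X_i/V_i,D_{X_i})$) stage by stage; termination over $V_i$ then follows from the existence of a relative good minimal model of $(X_{k_i},\Delta_{X_{k_i}})$ over $V_i$, which exists because it is pulled back from the relative good minimal model $Z_{i+1}\to V_i$ of $D_{Z_i}$ downstairs (this is exactly one step of the already-assumed $D$-MMP with scaling over $Z$, localized over $V_i$). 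Keeping careful track that "connected fibers" is preserved under each base change is a routine but necessary bookkeeping point.
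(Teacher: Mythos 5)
Your overall architecture matches the paper's: each step $Z_i \dashrightarrow Z_{i+1}$ of the $D$-MMP is lifted to a finite block $X_{k_i}\dashrightarrow\cdots\dashrightarrow X_{k_{i+1}}$ of the $(K_X+\Delta)$-MMP run over the contraction target $V_i$, with the scaling constant frozen at $\lambda_i$ throughout the block, and $\pi_{i+1}$ is obtained from a relative $\boldsymbol{\rm Proj}$ identification as in Remark \ref{remdefnd-mmp}. But the step you yourself flag as the main obstacle --- termination of the intermediate MMP over $V_i$ and existence of a relative good minimal model of $(X_{k_i},\Delta_{X_{k_i}})$ over $V_i$ --- is exactly where your argument has a genuine gap, and your proposed justification is circular. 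A relative good minimal model of the lc pair upstairs is not ``pulled back from the relative good minimal model $Z_{i+1}\to V_i$ of $D_{Z_i}$ downstairs'': the modification $X_{k_i}\dashrightarrow X_{k_{i+1}}$ consists of flips and divisorial contractions of the total space that are not determined by the base, and the normalized fiber product $X_{k_i}\times_{Z_i}Z_{i+1}$ does not even make sense when $Z_i\dashrightarrow Z_{i+1}$ is a flip, since it is then not a morphism. Likewise the ``termination results underlying Lemma \ref{lemd-mmp}'' concern a klt pair on the base and rest on \cite{bchm}; they say nothing about terminating an MMP on the lc pair $(X_{k_i},\Delta_{X_{k_i}})$.

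The missing ingredient is the following device from the paper's proof. Since $D_{Z_i}$ is anti-ample over $V_i$ and $\rho(Z_i/V_i)=1$, one can choose a general ample $\mathbb{Q}$-divisor $H$ on $Z_i$ with $D_{Z_i}+H\sim_{\mathbb{Q},\,V_i}0$, whence $K_{X_{k_i}}+\Delta_{X_{k_i}}+\pi_i^{*}H\sim_{\mathbb{Q},\,V_i}0$ with $\pi_i^{*}H$ big over $V_i$ (as $Z_i\to V_i$ is birational). This places you exactly in the hypotheses of \cite[Theorem 1.1]{birkar-flip} and \cite[Theorem 4.1 (iii)]{birkar-flip}, which give that the $(K_{X_{k_i}}+\Delta_{X_{k_i}})$-MMP over $V_i$ with scaling of an ample divisor terminates with a good minimal model $X'$ over $V_i$; only then can one form $Z'=\boldsymbol{\rm Proj}\bigl(\mathcal{R}(X'/V_i,K_{X'}+\Delta_{X'})\bigr)$ and identify $Z'\simeq Z_{i+1}$ by comparing relative log canonical rings. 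A second, smaller gap: property (iii) asserts equality of divisors, not merely $\mathbb{Q}$-linear equivalence, and it is not a formal consequence of the $\boldsymbol{\rm Proj}$ construction; the paper first proves $(\pi_i^{*}B)_{X_{k_{i+1}}}\sim_{\mathbb{Q}}\pi_{i+1}^{*}B_{Z_{i+1}}$ by using $\rho(Z_i/V_i)=1$ to write $B-rD_{Z_i}\sim_{\mathbb{Q}}f^{*}G$, and then upgrades this to an equality via the negativity lemma on a common resolution (using that $B$ or $-B$ is nef over $V_i$). You would need to supply both of these arguments to complete your proof.
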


\begin{proof}[Proof of Lemma \ref{lemd-mmp}]
Fix a strictly decreasing infinite sequence of rational numbers $\{a_{n}\}_{n\geq1}$ such that $0<a_{n}<1$ for any $n$ and ${\rm lim}_{n \to \infty}a_{n}=0$.
By the condition (iii) of Lemma \ref{lemlogresol}, any lc center of $(X,\Delta-a_{n}\Delta'')$ dominates $Z$ for any $n$. 
We also have $K_{X}+\Delta-a_{n}\Delta'' \sim_{\mathbb{Q}}\pi^{*}(D-a_{n}E)$ by the hypothesis. 
By \cite[Corollary 3.2]{fg-bundle}, there are $\mathbb{Q}$-divisors $\Psi_{n}$ on $Z$ such that all  $(Z,\Psi_{n})$ are klt and $D-a_{n}E \sim_{\mathbb{Q}}K_{Z}+\Psi_{n}$. 
Fix a sufficiently general semi-ample $\mathbb{Q}$-divisor $A' \sim_{\mathbb{Q}} A+E$ such that $(Z,\Psi_{n}+A')$ is klt for any $n\geq1$. 
Similarly, fix a sufficiently general semi-ample $\mathbb{Q}$-divisor $A'' \sim_{\mathbb{Q}} A$ such that $(Z,\Psi_{n}+A'+A'')$ is klt for any $n\geq1$.  
Then $(Z, \Psi_{n}+a_{n}A'+tA'')$ is also klt and 
\begin{equation*}\tag{$\spadesuit$}
\begin{split}D+(t+a_{n})A&=(D-a_{n}E)+a_{n}(A+E)+tA\\
&\sim_{\mathbb{Q}}K_{Z}+\Psi_{n}+a_{n}A'+tA''
\end{split}
\end{equation*} 
for any $0\leq t\leq1$ and $n\geq1$.
We note that $A'$ is big. 

Since $K_{Z}+\Psi_{1}+a_{1}A'+(1-a_{1})A''\sim_{\mathbb{Q}}D+A$ is nef, we can run the $(K_{Z}+\Psi_{1}+a_{1}A')$-MMP with scaling of $(1-a_{1})A''$. 
By \cite[Corollary 1.4.2]{bchm}, this log MMP terminates with a good minimal model or a Mori fiber space 
$$\phi:Z=Z_{0}\dashrightarrow Z_{1}\dashrightarrow \cdots \dashrightarrow Z_{k_{1}}=Z'$$
of $(Z,\Psi_{1}+a_{1}A')$. 
It is also a sequence of finitely many steps of the $(D+a_{1}A)$-MMP since we have  $D+a_{1}A\sim_{\mathbb{Q}}K_{Z}+\Psi_{1}+a_{1}A'$. 

For any $i\geq0$, we set
$$\lambda_{i}={\rm inf}\{\mu \in \mathbb{R}_{\geq0} \,|\, K_{Z_{i}}+(\Psi_{1})_{Z_{i}}+a_{1}A'_{Z_{i}}+\mu (1-a_{1})A''_{Z_{i}}{\rm \;is \;nef}\}$$ 
where $(\Psi_{1})_{Z_{i}}$ is the birational transform of $\Psi_{1}$ on $Z_{i}$. 
We note that $\lambda_{k_{1}-1}>0$ by the definition of the log MMP with scaling. 
By the above $(\spadesuit)$, for any $0\leq i <k_{1}$, $D_{Z_{i}}+\bigl(a_{1}+\lambda_{i}(1-a_{1})\bigr)A_{Z_{i}}$ is nef and trivial with respect to the extremal contraction of the  $(K_{Z}+\Psi_{1}+a_{1}A')$-MMP. 
Since $D_{Z_{i}}+a_{1}A_{Z_{i}}$ is anti-ample with respect to the extremal contraction, $D_{Z_{i}}$ is anti-ample with respect to the extremal contraction for any $0\leq i <k_{1}$. 
Moreover, if we set 
$$\lambda'_{i}={\rm inf}\{\mu \in \mathbb{R}_{\geq0} \,|\, D_{Z_{i}}+\mu A_{Z_{i}}{\rm \;is \;nef}\}$$ 
for any $0\leq i<k_{1}$, then $\lambda'_{i}=a_{1}+\lambda_{i}(1-a_{1})$ by the above discussion. 
Therefore $\phi$ is a sequence of finitely many steps of the $D$-MMP with scaling of $A$ (see Definition \ref{defnd-mmpwithscaling}).
Pick a rational number $t \leq a_{1}$. 
Then we have $t<\lambda'_{k_{1}-1}$ since $a_{1}<\lambda'_{k_{1}-1}$.
By Remark \ref{remd-mmp}, $\phi$ is a sequence of finitely many steps of the $(D+tA)$-MMP with scaling of $(1-t)A$ for any $0\leq t\leq a_{1}$. 
Since $K_{Z}+\Psi_{n}+a_{n}A'\sim_{\mathbb{Q}}D+a_{n}A$, $A'' \sim_{\mathbb{Q}}A$ and $\{a_{n}\}_{n\geq 1}$ is a strictly decreasing sequence, we see that $\phi$ is also a sequence of finitely many steps of the $(K_{Z}+\Psi_{n}+a_{n}A')$-MMP with scaling of $(1-a_{n})A''$ for any $n\geq1$. 

If $\bigl(Z', (\Psi_{1})_{Z'}+a_{1}A'_{Z'}\bigr)$ is a Mori fiber space, then the $D$-MMP with scaling terminates and we stop the process. 
If $\bigl(Z', (\Psi_{1})_{Z'}+a_{1}A'_{Z'}\bigr)$ is a good minimal model of $(Z, \Psi_{1}+a_{1}A')$, then $\lambda_{k_{1}}=0$.
By the above $(\spadesuit)$ we have
\begin{equation*}
\begin{split}K_{Z'}+(\Psi_{1})_{Z'}+a_{1}A'_{Z'}&\sim_{\mathbb{Q}}D_{Z'}+a_{1}A_{Z'}\\
&\sim_{\mathbb{Q}}K_{Z'}+(\Psi_{2})_{Z'}+a_{2}A'_{Z'}+(a_{1}-a_{2})A''_{Z'},
\end{split}
\end{equation*} 
and thus $K_{Z'}+(\Psi_{2})_{Z'}+a_{2}A'_{Z'}+(a_{1}-a_{2})A''_{Z'}$ is nef. 
Moreovrer the pair $\bigl(Z',(\Psi_{2})_{Z'}+a_{2}A'_{Z'}\bigr)$ is klt since $\phi$ is a sequence of finitely many steps of the $(K_{Z}+\Psi_{2}+a_{2}A')$-MMP. 
So we can run the $\bigl(K_{Z'}+(\Psi_{2})_{Z'}+a_{2}A'_{Z'}\bigr)$-MMP with scaling $(a_{1}-a_{2})A''_{Z'}$.
By \cite[Corollary 1.4.2]{bchm}, this log MMP terminates with a good minimal model or a Mori fiber space
$$\psi:Z'=Z_{k_{1}}\dashrightarrow Z_{k_{1}+1}\dashrightarrow \cdots \dashrightarrow Z_{k_{2}}=Z''$$
of $\bigl(Z',(\Psi_{2})_{Z'}+a_{2}A'_{Z'}\bigr)$. 
By the same discussion as above, we can check that $\psi\circ \phi:Z \dashrightarrow Z''$ is a sequence of finitely many steps of the $D$-MMP with scaling of $A$ and also a sequence of finitely many steps of the $(K_{Z}+\Psi_{n}+a_{n}A')$-MMP with scaling of $(1-a_{n})A''$ for any $n\geq2$.

By repeating the above discussions, we get a sequence of birational maps 
$$Z=Z_{0}\dashrightarrow Z_{1}\dashrightarrow \cdots\dashrightarrow Z_{k_{i}}\dashrightarrow \cdots$$
such that
\begin{itemize}
\item
for any $i\geq1$, the birational map $Z\dashrightarrow Z_{k_{i}}$ is a sequence of finitely many steps of the $(K_{Z}+\Psi_{i}+a_{i}A')$-MMP with scaling of $(1-a_{i})A''$ to a good minimal model or a Mori fiber space,
\item
the whole sequence of birational maps $Z\dashrightarrow \cdots\dashrightarrow Z_{j}\dashrightarrow \cdots$ is a sequence of steps of the $D$-MMP with scaling of $A$, and
\item
if the $D$-MMP does not terminate and if we set 
$$\lambda_{j}={\rm inf}\{\mu \in \mathbb{R}_{\geq0} \,|\, D_{Z_{j}}+\mu A_{Z_{j}}{\rm \;is \;nef}\},$$
then $\lambda_{k_{i}}\leq a_{i}$.
\end{itemize}
The third condition follows from the fact that $D_{Z_{k_{i}}}+a_{i}A_{Z_{k_{i}}}$ is nef. 
By the definition of $\{a_{n}\}_{n\geq1}$, we have ${\rm lim}_{i\to \infty}\lambda_{i}\leq{\rm lim}_{i\to \infty}a_{i}=0$ when the $D$-MMP does not terminate. 
Therefore ${\rm lim}_{i\to \infty}\lambda_{i}=0$ and hence we see that the above $D$-MMP with scaling of $A$ satisfies all the conditions of the lemma. 
So we are done.
\end{proof}

\begin{proof}[Poof of Lemma \ref{lemliftmmp}]
Set $\pi_{0}=\pi$ and let $f:Z_{0}\to V_{0}$ be the extremal contraction. 
Note that $K_{X_{0}}+\Delta_{X_{0}}+\pi_{0}^{*}A$ is nef since $D+A$ is nef. 
By the definition of $D$-MMP and the cone theorem \cite[Theorem 4.5.2]{fujino-book}, there is a general ample $\mathbb{Q}$-divisor $H$ on $Z_{0}$ such that $D+H\sim_{\mathbb{Q},\,V_{0}}0$. 
Therefore $K_{X_{0}}+\Delta_{X_{0}}+\pi_{0}^{*}H\sim_{\mathbb{Q},\,V_{0}}0$. 
By \cite[Theorem 1.1]{birkar-flip} and \cite[Theorem 4.1 (iii)]{birkar-flip}, the $(K_{X_{0}}+\Delta_{X_{0}})$-MMP over $V_{0}$ with scaling of an ample divisor terminates with a good minimal model 
$$\phi:(X_{0},\Delta_{X_{0}})\dashrightarrow(X_{1},\Delta_{X_{1}})\dashrightarrow \cdots \dashrightarrow (X_{k_{1}}=X',\Delta_{X_{k_{1}}}=\Delta_{X'})$$ 
over $V_{0}$. 
Then we can check that $\phi$ is a sequence of finitely many steps of the $(K_{X_{0}}+\Delta_{X_{0}})$-MMP with scaling of $\pi_{0}^{*}A$ and if we set 
$$\lambda'_{j}={\rm inf}\{\mu \in \mathbb{R}_{\geq0} \,|\, K_{X_{j}}+\Delta_{X_{j}}+\mu (\pi_{0}^{*}A)_{X_{j}}{\rm \;is \;nef}\}$$ 
for any $0\leq j\leq k_{1}$, then $\lambda'_{0}=\lambda'_{1}=\cdots = \lambda'_{k_{1}-1}=\lambda_{0}$ (see, for example, the proof of \cite[Proposition 4.1]{has-ab}). 

Since $K_{X'}+\Delta_{X'}$ is semi-ample over $V_{0}$, there is a natural morphism $X'\to Z'={\boldsymbol{\rm Proj}}\bigl(\mathcal{R}(X'/V_{0},K_{X'}+\Delta_{X'})\bigr)$ over $V_{0}$.
By construction we have $K_{X'}+\Delta_{X'}\sim_{\mathbb{Q},\,Z'}0$. 
Now $Z_{1}={\boldsymbol{\rm Proj}}\bigl(\mathcal{R}(Z_{0}/V_{0},D)\bigr)$ by Remark \ref{remdefnd-mmp}, and for any large and divisible positive integer $m$, we have 
\begin{equation*}
\begin{split}
\mathcal{R}(Z_{0}/V_{0},mD) &\simeq\mathcal{R}\bigl(X_{0}/V_{0},m(K_{X_{0}}+\Delta_{X_{0}})\bigr)\\
&\simeq \mathcal{R}\bigl(X'/V_{0},m(K_{X'}+\Delta_{X'})\bigr)
\end{split}
\end{equation*}
as sheaves of graded $\mathcal{O}_{V_{0}}$-algebra. 
Therefore we have $Z'\simeq Z_{1}$.
We put $\pi_{1}:X' \to Z_{1}\simeq Z'$. 
Then we see that $\pi_{1}$ has connected fibers by taking a common resolution of $\phi$.
We also see that $K_{X'}+\Delta_{X'} \sim_{\mathbb{Q}} \pi_{1}^{*} D_{Z_{1}}$ because $K_{X'}+\Delta_{X'}\sim_{\mathbb{Q},\,Z_{1}}0$ and $K_{X_{0}}+\Delta_{X_{0}}\sim_{\mathbb{Q}}\pi_{0}^{*}D$. 
Since $X_{0}$ is $\mathbb{Q}$-factorial and $(X_{0},0)$ is klt, it is easy to see that $X'$ is $\mathbb{Q}$-factorial and $(X',0)$ is klt. 

We prove $(\pi_{0}^{*}B)_{X'}=\pi_{1}^{*}B_{Z_{1}}$ for any $\mathbb{Q}$-divisor $B$ on $Z_{0}$. 
First we prove $(\pi_{0}^{*}B)_{X'}\sim_{\mathbb{Q}}\pi_{1}^{*}B_{Z_{1}}$, and after that  
we prove $(\pi_{0}^{*}B)_{X'}=\pi_{1}^{*}B_{Z_{1}}$.
Recall that $f:Z_{0}\to V_{0}$ is the extremal contraction of the $D$-MMP. 
Let $f_{1}:Z_{1}\to V_{0}$ be the induced morphism. 
By construction, there is a rational number $r$ and $\mathbb{Q}$-Cartier $\mathbb{Q}$-divisor $G$ on $V_{0}$ satisfying $B-rD\sim_{\mathbb{Q}}f^{*}G$.
Then $\pi_{0}^{*}B-r(K_{X_{0}}+\Delta_{X_{0}})\sim_{\mathbb{Q}}\pi_{0}^{*}f^{*}G.$
By taking the birational transform on $X'$, we obtain 
$(\pi_{0}^{*}B)_{X'}-r\pi_{1}^{*}D_{Z_{1}}\sim_{\mathbb{Q}}\pi_{1}^{*}f_{1}^{*}G$
because $K_{X'}+\Delta_{X'} \sim_{\mathbb{Q}} \pi_{1}^{*} D_{Z_{1}}$.
Since $B_{Z_{1}}-rD_{Z_{1}}\sim_{\mathbb{Q}}f_{1}^{*}G$, we see that  $(\pi_{0}^{*}B)_{X'}\sim_{\mathbb{Q}}\pi_{1}^{*}B_{Z_{1}}$. 
Next we prove $(\pi_{0}^{*}B)_{X'}=\pi_{1}^{*}B_{Z_{1}}$ as $\mathbb{Q}$-divisors. 
We note that $B$ or $-B$ is nef over $V_{0}$ because the relative Picard number $\rho(Z_{0}/V_{0})$ is one. 
Let $p:\widetilde{Z} \to Z_{0}$ and $p':\widetilde{Z} \to Z_{1}$ be a common resolution of $Z_{0}\dashrightarrow Z_{1}$, and let $q:\widetilde{X}\to X_{0}$ and $q':\widetilde{X}\to X'$ be a common resolution of $\phi:X_{0}\dashrightarrow X'$ such that the induced map $h:\widetilde{X} \to \widetilde{Z}$ is a morphism.
We set $F=p^{*}B-p'^{*}B_{Z_{1}}$. 
Then $F$ or $-F$ is effective by the negativity lemma. 
Moreover, by construction, we have 
\begin{equation*}
\begin{split}
(\pi_{0}^{*}B)_{X'}-\pi_{1}^{*}B_{Z_{1}}&=q'_{*}q^{*}\pi_{0}^{*}B-q'_{*}q'^{*}\pi_{1}^{*}B_{Z_{1}}\\
&=q'_{*}(h^{*}p^{*}B)-q'_{*}(h^{*}p'^{*}B_{Z_{1}})=q'_{*}h^{*}F.
\end{split}
\end{equation*}
On the other hand, since $(\pi_{0}^{*}B)_{X'}\sim_{\mathbb{Q}}\pi_{1}^{*}B_{Z_{1}}$, we have $q'_{*}h^{*}F\sim_{\mathbb{Q}}0$. 
Then $q'_{*}h^{*}F=0$ because $F$ or $-F$ is effective. 
In this way, we see that $(\pi_{0}^{*}B)_{X'}=\pi_{1}^{*}B_{Z_{1}}$ as $\mathbb{Q}$-divisors. 

Now we have $(\pi_{0}^{*}A)_{X'}=\pi_{1}^{*}A_{Z_{1}}$.
Since $\bigl(X',\Delta_{X'}+\lambda_{1}(\pi_{0}^{*}A)_{X'}\bigr)$ is lc, $(X',\Delta_{X'}+\lambda_{1}\pi_{1}^{*}A_{Z_{1}})$ is lc.
Therefore we can apply the above arguments to $\pi_{1}:(X_{k_{1}},\Delta_{X_{k_{1}}})=(X',\Delta_{X'})\to Z_{1}$ and $\lambda_{1}A_{Z_{1}}$. 
By repeating these arguments, we have the following diagram 
$$
\xymatrix@C=21pt{
(X_{0},\Delta_{X_{0}}) \ar_{\pi_{0}}[d] \ar@{-->}[r]&
\cdots \ar@{-->}[r]&(X_{k_{1}},\Delta_{X_{k_{1}}})\ar_{\pi_{1}}[d] \ar@{-->}[r]&
\cdots\ar@{-->}[r]& (X_{k_{i}},\Delta_{X_{k_{i}}}) \ar_{\pi_{i}}[d] \ar@{-->}[r]&
\cdots \\
(Z_{0}, \lambda_{0}) \ar@{-->}[rr]&&
(Z_{1}, \lambda_{1}) \ar@{-->}[r]&
\cdots\ar@{-->}[r]&(Z_{i}, \lambda_{i})\ar@{-->}[r]&
\cdots
}
$$
such that
\begin{itemize}
\item
for any $i$, $\pi_{i}$ is projective and surjective with connected fibers, 
\item 
the upper horizontal sequence of birational maps is a sequence of steps of the $(K_{X_{0}}+\Delta_{X_{0}})$-MMP with scaling of $\pi_{0}^{*}A$ such that if we set $k_{0}=0$ and  
$$\lambda'_{j}={\rm inf}\{\mu \in \mathbb{R}_{\geq0} \,|\, K_{X_{j}}+\Delta_{X_{j}}+\mu(\pi_{0}^{*}A)_{X_{j}}{\rm \;is \;nef}\},$$
then $\lambda'_{j}=\lambda_{i}$ for any $k_{i}\leq j<k_{i+1}$, and 
\item
$(\pi_{i}^{*}B)_{X_{k_{i+1}}}=\pi_{i+1}^{*}B_{Z_{i+1}}$ for any $i$ and any $\mathbb{Q}$-divisor $B$ on $Z_{i}$.
\end{itemize}
Pick any two indices $i<i'$ and $\mathbb{Q}$-divisor $B$ on $Z_{i}$. 
Then we can check that $(\pi_{i}^{*}B)_{X_{k_{i'}}}=\pi_{i'}^{*}B_{Z_{i'}}$ by induction on $i'-i$. 
Therefore the diagram satisfies all the conditions of the lemma. 
\end{proof}

\section{Proof of the main result in klt case and a reduction}\label{secreduction}

In this section we prove Theorem \ref{thmmain} in a special case, which contains the klt case, and prove a reduction lemma. 

Proposition \ref{propkltcase} below is a special case of Theorem \ref{thmmain}. 
From this proposition we see that Theorem \ref{thmmain} holds when $(X,\Delta)$ is klt.

\begin{prop}\label{propkltcase}
Fix a positive integer $d$, and assume the existence of a good minimal model or a Mori fiber space for all projective Kawamata log terminal pairs of dimension $d$ with boundary $\mathbb{Q}$-divisors.  

Let $\pi:X \to Z$ be a projective surjective morphism of normal projective varieties such that ${\rm dim}\,Z=d$. 
Let $(X,\Delta)$ be a log canonical pair such that $\Delta$ is a $\mathbb{Q}$-divisor and every lc center of $(X,\Delta)$ dominates $Z$.
Suppose that $K_{X}+\Delta \sim_{\mathbb{Q},\,Z}0$.

Then $(X,\Delta)$ has a good minimal model or a Mori fiber space.
\end{prop}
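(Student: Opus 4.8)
The plan is to reduce to the klt-trivial fibration case and apply Ambro's canonical bundle formula together with the minimal model theory for klt pairs on $Z$. First I would reduce to the situation where $K_X+\Delta\sim_{\mathbb{Q}}\pi^*D$ for an actual $\mathbb{Q}$-divisor $D$ on $Z$ (not merely $\mathbb{Q}$-linear equivalence to some pullback after base change): by the hypothesis $K_X+\Delta\sim_{\mathbb{Q},Z}0$ there is a $\mathbb{Q}$-Cartier $\mathbb{Q}$-divisor $D$ on $Z$ with $K_X+\Delta\sim_{\mathbb{Q}}\pi^*D$, and by passing to a $\mathbb{Q}$-factorialization of $Z$ (and modifying $\pi$ and the pair accordingly, so that $\pi$ has connected fibers via the Stein factorization) I may assume $Z$ is $\mathbb{Q}$-factorial; this does not affect the existence of a good minimal model or Mori fiber space for $(X,\Delta)$. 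Next, because every lc center of $(X,\Delta)$ dominates $Z$, the fibration $\pi:(X,\Delta)\to Z$ is a klt-trivial fibration in the sense of Ambro; by the canonical bundle formula for klt pairs (\cite{ambro}, or in the form \cite[Corollary 3.2]{fg-bundle} which is cited later in the excerpt) there is a $\mathbb{Q}$-divisor $\Psi$ on $Z$ such that $(Z,\Psi)$ is klt and $D\sim_{\mathbb{Q}}K_Z+\Psi$.

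Now I would run the minimal model program. The pair $(Z,\Psi)$ is a projective klt pair with a boundary $\mathbb{Q}$-divisor of dimension $d$, so by the assumed existence of good minimal models / Mori fiber spaces in dimension $d$, either $K_Z+\Psi$ is pseudo-effective and $(Z,\Psi)$ has a good minimal model, or $K_Z+\Psi$ is not pseudo-effective and $(Z,\Psi)$ has a Mori fiber space. I would run the $(K_Z+\Psi)$-MMP — equivalently, by Remark \ref{remdefnd-mmp}, the $D$-MMP — terminating at such a model $Z'$. The point is that this program can be lifted to $X$: choosing an ample $\mathbb{Q}$-divisor $A$ on $Z$ with $D+A$ nef and $(X,\Delta+\pi^*A)$ lc (possible since $(X,0)$ need not be klt here — wait, it is not klt, so I cannot directly invoke Lemma \ref{lemliftmmp}; instead I would lift the MMP directly). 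Concretely, each step of the $D$-MMP on $Z_i$ is a step of a $(K_{Z_i}+\Psi_{Z_i})$-MMP, hence given by a contraction $Z_i\to V_i$ with $D_{Z_i}\sim_{\mathbb{Q},V_i}m(K_{Z_i}+\Psi_{Z_i})$ anti-ample over $V_i$ for some $m>0$; pulling back, $K_{X_i}+\Delta_{X_i}\sim_{\mathbb{Q},V_i}$ a negative multiple over $V_i$, so I can run a $(K_{X_i}+\Delta_{X_i})$-MMP over $V_i$, which terminates with a good minimal model over $V_i$ by \cite[Theorem 1.1]{birkar-flip} and \cite[Theorem 4.1]{birkar-flip} (this uses that $K_{X_i}+\Delta_{X_i}$ is relatively trivial, hence semi-ample over $V_i$ is equivalent to having a good minimal model over $V_i$, which holds since the relative Kodaira/numerical dimension is zero and the base has smaller dimension — or directly because $K_{X_i}+\Delta_{X_i}\sim_{\mathbb{Q},V_i}$ an anti-ample-then-trivial divisor). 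Taking ${\boldsymbol{\rm Proj}}$ of the relative canonical ring recovers the lifted model $X_{i+1}\to Z_{i+1}$ with $K_{X_{i+1}}+\Delta_{X_{i+1}}\sim_{\mathbb{Q}}\pi_{i+1}^*D_{Z_{i+1}}$, exactly as in the proof of Lemma \ref{lemliftmmp}. After finitely many steps I reach $(X',\Delta_{X'})\to Z'$ over the final model $Z'$ of $(Z,\Psi)$.

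Finally I would conclude in the two cases. If $(Z',\Psi_{Z'})$ is a good minimal model, then $K_{Z'}+\Psi_{Z'}$, hence $D_{Z'}$, is semi-ample, so $K_{X'}+\Delta_{X'}\sim_{\mathbb{Q}}\pi'^*D_{Z'}$ is semi-ample and nef; checking the discrepancy inequality (which holds since all the intermediate maps are steps of an MMP for a log canonical divisor, so discrepancies only increase) shows $(X',\Delta_{X'})$ is a good minimal model of $(X,\Delta)$. If instead $(Z',\Psi_{Z'})$ is a Mori fiber space with contraction $Z'\to W$, then $D_{Z'}$ is anti-ample over $W$, so $K_{X'}+\Delta_{X'}$ is anti-ample over $W':=W$ (composing $\pi'$ with $Z'\to W$); one then runs a $(K_{X'}+\Delta_{X'})$-MMP over $W'$ — again this terminates since it is a relatively trivial-to-anti-ample situation over a base of dimension $<\dim X'$ — to produce a Mori fiber space of $(X,\Delta)$, using the construction in Remark \ref{remdefnmodels} to arrange the extremal contraction of relative Picard number one. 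The main obstacle I anticipate is the lifting step: one must verify carefully that at each stage the relative MMP over $V_i$ terminates with a \emph{good} minimal model (so that taking ${\boldsymbol{\rm Proj}}$ makes sense and the base variety is constructed), and that the relation $K_{X_i}+\Delta_{X_i}\sim_{\mathbb{Q}}\pi_i^*D_{Z_i}$ is preserved along with $\mathbb{Q}$-factoriality — this is precisely the technical content that in the klt-base case is packaged in Lemma \ref{lemliftmmp}, and here requires instead the relative results of \cite{birkar-flip} for log canonical pairs plus the observation that relative triviality of $K_X+\Delta$ makes the relative abundance automatic.
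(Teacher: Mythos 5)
Your opening moves match the paper's: Stein factorization, the canonical bundle formula of [FG1, Corollary 3.2] producing a klt pair $(Z,\Psi)$ with $K_{X}+\Delta\sim_{\mathbb{Q}}\pi^{*}(K_{Z}+\Psi)$, and the dimension-$d$ hypothesis applied to $(Z,\Psi)$. The gap is in the sentence where you pass to a $\mathbb{Q}$-factorialization of $Z$ ``modifying $\pi$ and the pair accordingly.'' This is not a routine reduction; it is exactly where the difficulty of the proposition is concentrated. A $\mathbb{Q}$-factorialization $\mu:\widetilde{Z}\to Z$ is small, but $X\dashrightarrow \widetilde{Z}$ is in general not a morphism (its indeterminacy locus lies over the codimension $\geq 2$ non-isomorphism locus of $\mu$, whose preimage in $X$ can contain divisors), and any birational model $g:Y\to X$ that does map to $\widetilde{Z}$ and carries an effective boundary $\Gamma$ satisfies only $K_{Y}+\Gamma=g^{*}(K_{X}+\Delta)+E$ with $E\geq 0$ exceptional: relative triviality over the new base is destroyed by $E$. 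Producing a crepant model of $(X,\Delta)$ with a morphism to a prescribed birational modification of $Z$ and with relatively trivial log canonical divisor requires running a relative MMP and invoking Birkar's lemma on very exceptional divisors [B2, Lemma 3.3] to contract $E$; this is precisely the engine of the paper's own proof (and of the corresponding $\mathbb{Q}$-factorialization step inside Lemma \ref{lemreduction}). Without that tool your reduction is unsupported, and with it the step-by-step lifting becomes unnecessary.

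Indeed, the paper avoids the lifting entirely: it takes the good minimal model $(Z',\Psi_{Z'})$ granted by the hypothesis (no $\mathbb{Q}$-factoriality of $Z$ and no MMP structure on $Z\dashrightarrow Z'$ is needed), chooses a log smooth model $(Y,\Gamma)$ of $(X,\Delta)$ mapping to a common resolution of $Z\dashrightarrow Z'$, writes $K_{Y}+\Gamma\sim_{\mathbb{Q}}h^{*}f'^{*}(K_{Z'}+\Psi_{Z'})+h^{*}F+E$, and runs a single $(K_{Y}+\Gamma)$-MMP over $Z'$ with scaling; over the large open set where $Z\dashrightarrow Z'$ is an isomorphism the MMP terminates, so $(h^{*}F)_{Y_{i}}+E_{Y_{i}}$ becomes very exceptional over $Z'$ and is killed by [B2, Lemma 3.3], making $K_{Y_{i}}+\Gamma_{Y_{i}}$ the pullback of the semi-ample $K_{Z'}+\Psi_{Z'}$. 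Your step-by-step lifting (essentially an lc version of Lemma \ref{lemliftmmp}) would go through once the base is $\mathbb{Q}$-factorial and $(X,\Delta)$ is replaced by a dlt model---note that a dlt blow-up already makes $(Y,0)$ klt and preserves ``all lc centers dominate $Z$,'' so Lemma \ref{lemliftmmp} would then apply as stated and you would not need to redo it---but the $\mathbb{Q}$-factorialization of the base is the step you have not justified. (Your treatment of the Mori fiber space case by lifting from $Z$ is workable in principle but unnecessary: the paper disposes of that case at the outset, since $K_{X}+\Delta$ not pseudo-effective yields a Mori fiber space directly via [BCHM].)
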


\begin{proof}
We can prove this by the same arguments as in the proof of \cite[Proposition 3.3]{birkarhu-arg}. 
We write details for the reader's convenience. 

By taking the Stein factorization of $\pi$, we may assume that $\pi$ has connected fibers. 
We may also assume that $K_{X}+\Delta$ is pseudo-effective because otherwise we can find a  Mori fiber space of $(X,\Delta)$ by \cite{bchm}. 
By \cite[Corollary 3.2]{fg-bundle}, there is a $\mathbb{Q}$-divisor $\Psi$ on $Z$ such that $(Z,\Psi)$ is klt and $K_{X}+\Delta \sim_{\mathbb{Q}}\pi^{*}(K_{Z}+\Psi)$. 
Then $K_{Z}+\Psi$ is pseudo-effective (cf.~\cite[II 5.6 Lemma]{nakayama-zariski-decom}).
By the hypothesis, there is a good minimal model $\phi:(Z,\Psi)\dashrightarrow (Z',\Psi_{Z'})$ of $(Z,\Psi)$. 

Let $f:W \to Z$ and $f':W \to Z'$ be a common resolution of $\phi$ and let $g:(Y,\Gamma) \to (X,\Delta)$ be a log smooth model such that the induced map $h:Y\dashrightarrow W$ is a morphism. 
Then we see that $f'\circ h:Y \to Z'$ has connected fibers.  
Moreover we have $K_{Y}+\Gamma = g^{*}(K_{X}+\Delta)+E$ for an effective $g$-exceptional divisor $E$ and $f^{*}(K_{Z}+\Psi)=f'^{*}(K_{Z'}+\Psi_{Z'})+F$ for an effective $f'$-exceptional divisor $F$. 
Then
\begin{equation*}
\begin{split}
K_{Y}+\Gamma &= g^{*}(K_{X}+\Delta)+E \sim_{\mathbb{Q}}g^{*}\pi^{*}(K_{Z}+\Psi)+E\\
&=h^{*}f^{*}(K_{Z}+\Psi)+E=h^{*}f'^{*}(K_{Z'}+\Psi_{Z'})+h^{*}F+E.
\end{split}
\end{equation*}
We run the $(K_{Y}+\Gamma)$-MMP over $Z'$ with scaling of an ample divisor 
$$Y \dashrightarrow Y_{1}\dashrightarrow \cdots\dashrightarrow Y_{i}\dashrightarrow \cdots.$$
Pick an open set $U$ of $Z$ such that the restriction of $\phi$ to $U$ is an isomorphism $\phi|_{U}:U\to \phi(U)$ and the codimension of $Z'\setminus \phi(U)$ in $Z'$ is at least two. 
By shrinking $U$ if necessary, we can assume that $F$ is mapped into $Z'\backslash \phi(U)$. 
Set $V=(\pi\circ g)^{-1}(U)$.
Since $K_{X}+\Delta \sim_{\mathbb{Q},\,Z}0$ and by the definition of log smooth models, we see that  $(\pi^{-1}(U),\Delta|_{\pi^{-1}(U)})$ is a weak lc model model of $(V,\Gamma|_{V})$ over $U$ with relatively trivial log canonical divisor. 
Since $U \simeq \phi(U)$, the $(K_{Y}+\Gamma)$-MMP over $Z'$ must terminate over $\phi(U)$. 
In other words, if $V_{i}$ denotes the inverse image of $\phi(U)$ on $Y_{i}$, the divisor $(K_{Y_{i}}+\Gamma_{Y_{i}})|_{V_{i}}$ is $\mathbb{Q}$-linearly equivalent to the pullback of $(K_{Z'}+\Psi_{Z'})|_{\phi(U)}$ for any $i\gg0$. 
Therefore $E$ is eventually contracted over $\phi(U)$. 

By the above facts, we see that $(h^{*}F)_{Y_{i}}+E_{Y_{i}}$ is mapped into $Z'\backslash \phi(U)$ for any $i\gg 0$. 
In particular $(h^{*}F)_{Y_{i}}+E_{Y_{i}}$ is a very exceptional divisor over $Z'$ (cf.~\cite[Definition 3.1]{birkar-flip}). 
Moreover, by the definition of the log MMP with scaling, $K_{Y_{i}}+\Gamma_{Y_{i}}\sim_{\mathbb{Q},\,Z'}(h^{*}F)_{Y_{i}}+E_{Y_{i}}$ is the limit of movable divisors over $Z'$ for any $i \gg 0$. 
Then $(h^{*}F)_{Y_{i}}+E_{Y_{i}}=0$ by \cite[Lemma 3.3]{birkar-flip}. 
Therefore $K_{Y_{i}}+\Gamma_{Y_{i}}$ is $\mathbb{Q}$-linearly equivalent to the pullback of $K_{Z'}+\Psi_{Z'}$ for some $i$. 
Since $K_{Z'}+\Psi_{Z'}$ is semi-ample, $K_{Y_{i}}+\Gamma_{Y_{i}}$ is also semi-ample. 
Therefore $(Y_{i},\Gamma_{Y_{i}})$ is a good minimal model of $(Y,\Gamma)$. 
Since $(Y,\Gamma)$ is a log smooth model of $(X,\Delta)$, $(X,\Delta)$ also has a good minimal model. 
So we are done. 
\end{proof} 

We can prove the following proposition by using \cite[Corollary 1.4.2]{bchm} and the same discussion as in the proof of Proposition \ref{propkltcase}. 

\begin{prop}\label{propbigkltcase}
Let $\pi:X \to Z$ be a projective surjective morphism of $\mathbb{Q}$-factorial normal projective varieties with connected fibers. 
Let $(X,\Delta)$ be a log canonical pair such that $\Delta$ is a $\mathbb{Q}$-divisor and every lc center of $(X,\Delta)$ dominates $Z$.
Suppose that $K_{X}+\Delta \sim_{\mathbb{Q}}\pi^{*}D$ for a $\mathbb{Q}$-divisor $D$ on $Z$.

If $D$ is big, then $(X,\Delta)$ has a good minimal model.
\end{prop}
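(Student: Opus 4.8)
The plan is to follow the proof of Proposition \ref{propkltcase} almost word for word; the new point is that the bigness of $D$ makes \cite[Corollary 1.4.2]{bchm} available to play the role that the inductive hypothesis plays there. First I would record that $K_{X}+\Delta\sim_{\mathbb{Q}}\pi^{*}D$ is big, hence pseudo-effective, since $D$ is big and $\pi$ is surjective. Because every lc center of $(X,\Delta)$ dominates $Z$, the canonical bundle formula \cite[Corollary 3.2]{fg-bundle} yields a $\mathbb{Q}$-divisor $\Psi$ on $Z$ such that $(Z,\Psi)$ is klt and $K_{X}+\Delta\sim_{\mathbb{Q}}\pi^{*}(K_{Z}+\Psi)$. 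Since $\pi$ is surjective with connected fibers, $\pi^{*}$ is injective on $\mathbb{Q}$-linear equivalence classes of $\mathbb{Q}$-Cartier $\mathbb{Q}$-divisors (use $\pi_{*}\mathcal{O}_{X}=\mathcal{O}_{Z}$ and the projection formula), so $K_{Z}+\Psi\sim_{\mathbb{Q}}D$; in particular $K_{Z}+\Psi$ is big. As $Z$ is $\mathbb{Q}$-factorial and $(Z,\Psi)$ is klt with big log canonical divisor, \cite[Corollary 1.4.2]{bchm} (after the standard Kodaira-lemma reduction to the case of a big boundary, which only rescales the log canonical divisor and so does not affect the minimal model) gives a good minimal model $\phi\colon(Z,\Psi)\dashrightarrow(Z',\Psi_{Z'})$.

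From here I would repeat the argument of Proposition \ref{propkltcase} verbatim. Take a common resolution $f\colon W\to Z$, $f'\colon W\to Z'$ of $\phi$ together with a log smooth model $g\colon(Y,\Gamma)\to(X,\Delta)$ through which the induced rational map becomes a morphism $h\colon Y\to W$; write $K_{Y}+\Gamma=g^{*}(K_{X}+\Delta)+E$ and $f^{*}(K_{Z}+\Psi)=f'^{*}(K_{Z'}+\Psi_{Z'})+F$ with $E,F\geq0$ exceptional, so that $K_{Y}+\Gamma\sim_{\mathbb{Q}}h^{*}f'^{*}(K_{Z'}+\Psi_{Z'})+h^{*}F+E$. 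Run the $(K_{Y}+\Gamma)$-MMP over $Z'$ with scaling of an ample divisor, and fix an open $U\subseteq Z$ on which $\phi$ restricts to an isomorphism, with ${\rm codim}(Z'\setminus\phi(U))\geq2$ and with $F$ supported over $Z'\setminus\phi(U)$. Since $K_{X}+\Delta$ is $\mathbb{Q}$-linearly trivial over $Z$, this MMP is already minimal over $\phi(U)$, so after finitely many steps $(h^{*}F)_{Y_{i}}+E_{Y_{i}}$ is a very exceptional divisor over $Z'$ which is moreover a limit of movable divisors over $Z'$, hence is $0$ by \cite[Lemma 3.3]{birkar-flip}. Then $K_{Y_{i}}+\Gamma_{Y_{i}}$ is $\mathbb{Q}$-linearly equivalent to the pullback of the semi-ample divisor $K_{Z'}+\Psi_{Z'}$, so $(Y_{i},\Gamma_{Y_{i}})$ is a good minimal model of $(Y,\Gamma)$, and therefore $(X,\Delta)$ has a good minimal model.

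Since the second half is essentially a quotation of an argument already carried out in the excerpt, I do not expect a genuine obstacle here. The only points requiring care are that the hypotheses of \cite[Corollary 3.2]{fg-bundle} are met --- guaranteed by the assumption that every lc center of $(X,\Delta)$ dominates $Z$ --- and that $K_{Z}+\Psi$ is really big; the latter follows at once from $K_{Z}+\Psi\sim_{\mathbb{Q}}D$, and if one prefers to avoid the injectivity remark one can instead note directly that $\pi^{*}(K_{Z}+\Psi-D)\sim_{\mathbb{Q}}0$ forces $K_{Z}+\Psi\sim_{\mathbb{Q}}D$ because $\pi$ is a contraction.
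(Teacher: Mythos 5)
Your proposal is correct and is exactly the argument the paper intends: the paper's own justification of Proposition \ref{propbigkltcase} is precisely ``use \cite[Corollary 1.4.2]{bchm} and the same discussion as in the proof of Proposition \ref{propkltcase}'', which is what you carry out, with the bigness of $K_{Z}+\Psi\sim_{\mathbb{Q}}D$ replacing the inductive hypothesis as the source of the good minimal model of $(Z,\Psi)$.
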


We close this section with a reduction lemma, which plays a key role in the proof of Theorem \ref{thmmain}. 

\begin{lemm}\label{lemreduction}
To prove Theorem \ref{thmmain}, we may assume the following conditions about $\pi:(X,\Delta)\to Z$. 
\begin{enumerate}
\item[(i)]
$\pi$ has connected fibers, $(X,0)$ is $\mathbb{Q}$-factorial Kawamata log terminal and  $Z$ is $\mathbb{Q}$-factorial, 
\item[(ii)]
$(X,\Delta=\Delta'+\Delta'')$ satisfies all the conditions of Lemma \ref{lemlogresol} with respect to $\pi$, 
\item[(iii)]
$K_{X}+\Delta \sim_{\mathbb{Q}}\pi^{*}D$ and $\Delta'' \sim_{\mathbb{Q}}\pi^{*}E$, where $D$ is a pseudo-effective $\mathbb{Q}$-divisor and $E$ is an effective $\mathbb{Q}$-divisor on $Z$, and 
\item[(iv)]
there is a $\mathbb{Q}$-divisor $A$ on $X$ such that $K_{X}+\Delta+\delta A$ is movable for any sufficiently small $\delta>0$. 
\end{enumerate} 
\end{lemm}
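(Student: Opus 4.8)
The plan is to reduce to the four listed conditions one at a time, at each stage checking that proving Theorem \ref{thmmain} for the modified datum implies it for the original one. First I would arrange condition (i). Taking the Stein factorization of $\pi$ replaces $Z$ by a normal variety with connected fibers, which does not affect $K_{X}+\Delta\sim_{\mathbb{Q},Z}0$; then a small $\mathbb{Q}$-factorialization of $Z$ makes $Z$ $\mathbb{Q}$-factorial (this is harmless since we only need $Z$ normal projective and we may replace it birationally). To make $(X,0)$ $\mathbb{Q}$-factorial klt I would pass to a dlt blow-up, but more precisely to the special dlt blow-up furnished by Corollary \ref{cordltblowup}: apply it to $\pi\colon(X,\Delta)\to Z$ to obtain $f\colon(Y,\Gamma=\Gamma'+\Gamma'')\to(X,\Delta)$ with $Y$ $\mathbb{Q}$-factorial, $K_{Y}+\Gamma=f^{*}(K_{X}+\Delta)$, and $(Y,\Gamma)$ satisfying all conditions of Lemma \ref{lemlogresol} with respect to $\pi\circ f$. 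This simultaneously gives condition (ii). Since $K_{Y}+\Gamma=f^{*}(K_{X}+\Delta)$, a good minimal model (resp.\ Mori fiber space) of $(Y,\Gamma)$ is one of $(X,\Delta)$ by the standard discrepancy comparison, so it suffices to treat $(Y,\Gamma)\to Z$; also $(Y,0)$ is klt because $Y$ is smooth (or at worst we may further take a log resolution). We may assume $K_{X}+\Delta$ is pseudo-effective throughout, since otherwise \cite{bchm} applied after the reductions produces a Mori fiber space.

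Next I would secure condition (iii). After the above reductions we have $(X,\Delta=\Delta'+\Delta'')$ with every lc center of $(X,\Delta')=(X,\Delta-\Delta'')$ dominating $Z$ and $\Delta''$ a reduced divisor with $\pi(\Supp\Delta'')\subsetneq Z$. The point is to modify the situation so that $K_{X}+\Delta'\sim_{\mathbb{Q},Z}0$ and $\Delta''$ is itself a pullback from $Z$. Here I would use that $\Delta''$ is supported over a proper closed subset of $Z$: after a further birational modification of $Z$ (blowing up the image and resolving) and a corresponding log resolution of $X$, the components of $\Delta''$ become contained in the support of $\pi^{*}(\text{a divisor on }Z)$, and by absorbing the relevant exceptional divisors into $\Delta'$ one arranges $\Delta''\sim_{\mathbb{Q}}\pi^{*}E$ for an effective $\mathbb{Q}$-divisor $E$; since $K_{X}+\Delta\sim_{\mathbb{Q},Z}0$ this forces $K_{X}+\Delta'\sim_{\mathbb{Q},Z}-\Delta''\sim_{\mathbb{Q}}-\pi^{*}E$, and adjusting by a pullback we may take $K_{X}+\Delta\sim_{\mathbb{Q}}\pi^{*}D$ with $D$ a genuine $\mathbb{Q}$-divisor on $Z$, pseudo-effective by \cite[II 5.6 Lemma]{nakayama-zariski-decom} since $K_{X}+\Delta$ is. Throughout I must re-verify that conditions (i) and (ii) persist; condition (ii) is stable under the modifications by the discrepancy monotonicity noted in the proof of Corollary \ref{cordltblowup}.

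Finally, for condition (iv) I would invoke that $K_{X}+\Delta\sim_{\mathbb{Q}}\pi^{*}D$ with $D$ pseudo-effective: choosing $A$ to be (the pullback to $X$ of, or simply) a sufficiently general ample $\mathbb{Q}$-divisor on $X$, the divisor $D$ on $Z$ is a limit of effective divisors, hence $D+\epsilon(\text{ample})$ is big, and pulling back, $K_{X}+\Delta+\delta A$ lies in the interior of the cone spanned by pullbacks of big divisors — in particular it is movable (being $\mathbb{Q}$-linearly equivalent to a pullback of an effective divisor plus a small ample, which after stable base locus considerations is movable) — for all small $\delta>0$. Alternatively one can run a $(K_{X}+\Delta+\delta A)$-MMP or appeal to the fact that a pseudo-effective pullback perturbed by a small ample has no divisorial components in its diminished base locus.

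I expect the main obstacle to be condition (iii): the passage to $K_{X}+\Delta'\sim_{\mathbb{Q},Z}0$ together with $\Delta''\sim_{\mathbb{Q}}\pi^{*}E$ requires care in simultaneously modifying $X$ and $Z$ without destroying the log-smooth-model structure of Lemma \ref{lemlogresol} and without changing the existence of a good minimal model or Mori fiber space — one must track exceptional divisors on both sides and check the weak lc model comparison of discrepancies holds after each step. The other three conditions are essentially formal consequences of Stein factorization, $\mathbb{Q}$-factorialization, Corollary \ref{cordltblowup}, and elementary positivity of perturbations by ample divisors.
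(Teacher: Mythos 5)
Your overall scaffolding (Stein factorization, dlt blow-up, Corollary \ref{cordltblowup} for condition (ii), reducing one condition at a time) matches the paper, and you correctly identify condition (iii) as the crux; but the mechanisms you propose for (iii) and (iv) both have genuine gaps. For (iii): making the components of $\Delta''$ lie inside $\Supp\,\pi^{*}(\text{a divisor on }Z)$ and ``absorbing exceptional divisors into $\Delta'$'' does not yield $\Delta''\sim_{\mathbb{Q}}\pi^{*}E$. The divisor $\Delta''$ is reduced, while the fiber over a divisor $B\subset Z$ typically has several components appearing in $\pi^{*}B$ with assorted multiplicities, only some of which lie in $\Delta''$; no blow-up of $Z$ removes this obstruction, and you cannot move vertical components into $\Delta'$ without destroying condition (iii) of Lemma \ref{lemlogresol} (lc centers of $(X,\Delta')$ must dominate $Z$). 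The paper's actual device is an MMP: since $K_{X}+\Delta'\sim_{\mathbb{Q},Z}-\Delta''$, one runs the $(K_{X}+\Delta')$-MMP over $Z$ to a relative good minimal model $(X',\Delta'_{X'})$ (termination via \cite[Theorem 1.1]{birkar-flip}, using $K_{X}+\Delta'+\Delta''\sim_{\mathbb{Q},Z}0$), lets $\pi':X'\to Z'$ be the induced semi-ample fibration over $Z$ (with $Z'\to Z$ birational), and then subtracts $K_{X'}+\Delta'_{X'}\sim_{\mathbb{Q},Z'}0$ from $K_{X'}+\Delta_{X'}\sim_{\mathbb{Q},Z'}0$ to conclude $\Delta''_{X'}\sim_{\mathbb{Q},Z'}0$. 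Relatedly, your ``small $\mathbb{Q}$-factorialization of $Z$'' is not available for free on an arbitrary normal projective $Z$: the paper first uses \cite[Corollary 3.2]{fg-bundle} applied to $(X,\Delta')\to Z$ (which needs the lc centers of $(X,\Delta')$ to dominate $Z$, hence needs (ii) and the $\Delta''\sim_{\mathbb{Q},Z}0$ step already in place) to produce a klt pair on $Z$, and only then takes a small dlt blow-up $\widetilde{Z}\to Z$ and rebuilds $X$ over $\widetilde{Z}$ by a log smooth model and a relative MMP.

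For (iv) the gap is sharper: it is simply false that a pseudo-effective divisor plus a small ample is movable, or that its diminished base locus has no divisorial components; $N_{\sigma}(K_{X}+\Delta)$ can very well contain divisors, and $B(K_{X}+\Delta+\delta A)$ approaches $B_{-}(K_{X}+\Delta)$ as $\delta\to 0$. Condition (iv) is precisely the statement that the divisorial part of the diminished base locus has been removed, and the paper earns it by running the $D$-MMP with scaling of an ample divisor $A$ on $Z$ (Lemma \ref{lemd-mmp}), lifting it to a $(K_{X}+\Delta)$-MMP with scaling of $\pi^{*}A$ via Lemma \ref{lemliftmmp}, and replacing $(X,\Delta)\to Z$ by $(X_{k_{i}},\Delta_{X_{k_{i}}})\to Z_{i}$ for $i\gg0$, after which either $K_{X}+\Delta$ is nef or only flips remain and $K_{X}+\Delta+\lambda_{i}\pi^{*}A$ is movable for a sequence $\lambda_{i}\to 0$. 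Without this replacement, no choice of $A$ on the original $X$ will do in general, so your argument for (iv) cannot be repaired in place.
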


\begin{proof}
By taking a dlt blow-up and by replacing $(X,\Delta)$ if necessary, we can assume that $X$ is $\mathbb{Q}$-factorial and $(X,0)$ is klt. 
We may also assume that $K_{X}+\Delta$ is pseudo-effective because otherwise we can find a Mori fiber space of $(X,\Delta)$ by running the $(K_{X}+\Delta)$-MMP with scaling. 
We note that existence of a good minimal model of $(X, \Delta)$ is equivalent to existence of a weak lc model of $(X, \Delta)$ with semi-ample log canonical divisor (see \cite[Corollary 3.7]{birkar-flip}). 
By taking the Stein factorization of $\pi$ and by Corollary \ref{cordltblowup}, we may assume that $\pi$ has connected fibers and the  condition (ii) of the lemma holds. 

Next we show that we can assume $\Delta''\sim_{\mathbb{Q},\,Z}0$ to prove Theorem \ref{thmmain}. 
Since $K_{X}+\Delta$ is pseudo-effective and $\Delta''$ is vertical over $Z$, we see that $K_{X}+\Delta'$ is pseudo-effective over $Z$. 
We run the $(K_{X}+\Delta')$-MMP over $Z$ with scaling of an ample divisor. 
By \cite[Theorem 1.1]{birkar-flip}, this log MMP terminates with a good minimal model $\phi:(X,\Delta' )\to (X',\Delta'_{X'})$ of $(X,\Delta')$ over $Z$ because $K_{X}+\Delta'+\Delta'' \sim_{\mathbb{Q},\,Z}0$. 
Set $\Delta_{X'}''=\phi_{*}\Delta''$ and  $\Delta_{X'}=\phi_{*}\Delta$.  
Then $(X',\Delta_{X'}=\Delta_{X'}'+\Delta_{X'}'')$ satisfies all the conditions of Lemma \ref{lemlogresol} with respect to the morphism $X' \to Z$. 
Let $\pi':X' \to Z'$ be the Stein factorization of the morphism induced by $K_{X'}+\Delta_{X'}'$ over $Z$. 
Then $\pi'$ has connected fibers and it is easy to see that the morphism $Z'\to Z$ is birational. 
Therefore $(X',\Delta_{X'}=\Delta_{X'}'+\Delta_{X'}'')$ satisfies all the conditions of Lemma \ref{lemlogresol} with respect to $\pi'$. 
Moreover we have $K_{X'}+\Delta_{X'}\sim_{\mathbb{Q},\,Z'}0$  and $K_{X'}+\Delta'_{X'}\sim_{\mathbb{Q},\,Z'}0$. 
Therefore $\Delta''_{X'} \sim_{\mathbb{Q},\,Z'}0$. 
Since $\phi$ is a birational contraction and both  $K_{X}+\Delta$ and $K_{X'}+\Delta_{X'}$ are $\mathbb{Q}$-linearly equivalent to the pullback of the same divisor on $Z$, $(X,\Delta)$ has a weak lc model with semi-ample log canonical divisor if $(X',\Delta_{X'})$ has a weak lc model with semi-ample log canonical divisor.
In this way, by replacing $\pi:(X,\Delta)\to Z$ with $\pi':(X',\Delta_{X'})\to Z'$, we may assume that $\Delta''\sim_{\mathbb{Q},\,Z}0$. 
Then there is an effective $\mathbb{Q}$-Cartier $\mathbb{Q}$-divisor $E$ on $Z$ such that $\Delta'' \sim_{\mathbb{Q}}\pi^{*}E$.

Now we can prove that we may assume the condition (i) of the lemma to prove Theorem \ref{thmmain}. 
Let $E$ be a $\mathbb{Q}$-divisor defined above and pick a $\mathbb{Q}$-divisor $D$ on $Z$ such that $K_{X}+\Delta \sim_{\mathbb{Q}}\pi^{*}D$.  
By the condition (ii) of this lemma and the condition (iii) of Lemma \ref{lemlogresol}, every lc center of $(X,\Delta')$ dominates $Z$. 
Since $K_{X}+\Delta'\sim_{\mathbb{Q},\,Z}0$, by \cite[Corollary 3.2]{fg-bundle}, there exists a klt pair on $Z$. 
Let $f:\widetilde{Z}\to Z$ be a dlt blow-up of the klt pair. 
By construction $\widetilde{Z}$ is $\mathbb{Q}$-factorial and $Z$ and $\widetilde{Z}$ are isomorphic in codimension one. 
Let $g:(Y,\Gamma') \to (X,\Delta')$ be a log smooth model of $(X,\Delta')$ such that the induced map $h: Y\dashrightarrow \widetilde{Z}$ is a morphism. 
Then $\pi \circ g =f \circ h$ and we can write 
$$K_{Y}+\Gamma'=g^{*}(K_{X}+\Delta')+F\sim_{\mathbb{Q}}(f \circ h)^{*}(D-E)+F$$
with an effective $g$-exceptional divisor $F$ which contains every $g$-exceptional prime divisor whose discrepancy with respect to $(X,\Delta')$ is greater than $-1$.  
We run the $(K_{Y}+\Gamma')$-MMP over $\widetilde{Z}$ with scaling. 
Since $\widetilde{Z}$ and $Z$ are isomorphic in codimension one, by the same argument as in the proof of Proposition \ref{propkltcase}, we obtain a good minimal model $\phi:(Y,\Gamma') \dashrightarrow (Y',\Gamma'_{Y'})$ over $\widetilde{Z}$. 
Let $h':Y' \to \widetilde{Z}$ be the induced morphism. 
Then $Y'$ is $\mathbb{Q}$-factorial, $(Y',0)$ is klt and $h'$ has connected fibers. 
We also have $F_{Y'}=0$ and $K_{Y'}+\Gamma'_{Y'}\sim_{\mathbb{Q}}(f\circ h')^{*}(D-E)$ by construction (see the proof of Proposition \ref{propkltcase}). 
Set 
$$\Gamma''_{Y'}=\phi_{*}g^{*}\Delta'' \sim_{\mathbb{Q}}(f \circ h')^{*}E \qquad {\rm and} \qquad \Gamma_{Y'}=\Gamma'_{Y'}+\Gamma''_{Y'}.$$ 
By taking a common resolution of $X\dashrightarrow Y'$, we see that $(Y',\Gamma_{Y'})$ is lc. 
Moreover we see that $\Gamma''_{Y'}$ is a reduced divisor. 
Indeed, we can write $g^{*}\Delta''=g_{*}^{-1}\Delta''+\sum a_{i}E_{i}$ where $E_{i}$ are $g$-exceptional prime divisors and $a_{i}>0$. 
Then $a(E_{i},X,\Delta')>-1$ because $(X,\Delta=\Delta'+\Delta'')$ is lc. 
Therefore ${\rm Supp}\,(\sum a_{i}E_{i})\subset {\rm Supp}\,F$. 
Since $F_{Y'}=0$, we see that $\Gamma''_{Y'}=\phi_{*}g_{*}^{-1}\Delta''$ and thus $\Gamma''_{Y'}$ is a reduced divisor. 
Now we can easily check that $(Y',\Gamma_{Y'}=\Gamma'_{Y'}+\Gamma''_{Y'})$ satisfies all the conditions of Lemma \ref{lemlogresol} with respect to $h'$. 
If we set $D'=f^{*}D$ and $E'=f^{*}E$, then $K_{Y'}+\Gamma_{Y'}\sim_{\mathbb{Q}}h'^{*}D'$ and $\Gamma''_{Y'}\sim_{\mathbb{Q}}h'^{*}E'$. 
By construction we can also check that $(Y',\Gamma_{Y'})$ is a log birational model of $(X,\Delta)$. 
Therefore, to prove Theorem \ref{thmmain} for $(X,\Delta)$, we only have to show that $(Y',\Gamma_{Y'})$ has a weak lc model with semi-ample log canonical divisor. 
In this way, by replacing $\pi:(X,\Delta) \to Z$ with $h':(Y',\Gamma_{Y'})\to \widetilde{Z}$, we may assume the condition (i) of the lemma. 

Finally we show that we can assume the condition (iv) of the lemma to prove Theorem \ref{thmmain}. 
Since $K_{X}+\Delta$ is pseudo-effective, $D$ is pseudo-effective (cf.~\cite[II 5.6 Lemma]{nakayama-zariski-decom}).
Let $A$ be a general ample $\mathbb{Q}$-divisor on $Z$ such that $A+E$ and $D+A$ are also ample. 
By Lemma \ref{lemd-mmp}, there is a sequence of birational  maps of the $D$-MMP with scaling of $A$ 
$$(Z=Z_{0},\lambda_{0}) \dashrightarrow \cdots \dashrightarrow (Z_{i},\lambda_{i})\dashrightarrow \cdots.$$
Then we have ${\rm lim}_{i \to \infty} \lambda_{i}=0$. 
Moreover, by construction of the $D$-MMP with scaling in Lemma \ref{lemd-mmp},  $D_{Z_{i}}+\lambda_{i}A_{Z_{i}}$ is semi-ample if $\lambda_{i}>0$. 

Since $A$ is a general ample $\mathbb{Q}$-divisor and $D+A$ is nef, by Lemma \ref{lemliftmmp}, we have the following diagram 
$$
\xymatrix@C=21pt{
(X,\Delta) \ar_{\pi=\pi_{0}}[d] \ar@{-->}[r]&
\cdots \ar@{-->}[r]&(X_{k_{1}},\Delta_{X_{k_{1}}})\ar_{\pi_{1}}[d] \ar@{-->}[r]&
\cdots\ar@{-->}[r]& (X_{k_{i}},\Delta_{X_{k_{i}}}) \ar_{\pi_{i}}[d] \ar@{-->}[r]&
\cdots \\
(Z_{0}, \lambda_{0}) \ar@{-->}[rr]&&
(Z_{1}, \lambda_{1}) \ar@{-->}[r]&
\cdots\ar@{-->}[r]&(Z_{i}, \lambda_{i})\ar@{-->}[r]&
\cdots
}
$$
such that 
\begin{enumerate}
\item[(i)]
for any $i$, $\pi_{i}$ is projective and surjective with connected fibers, 
\item[(ii)]
the upper horizontal sequence of birational maps is a sequence of steps of the $(K_{X}+\Delta)$-MMP with scaling of $\pi^{*}A$ such that 
if we set $k_{0}=0$ and
$$\lambda'_{j}={\rm inf}\{\mu \in \mathbb{R}_{\geq0} \,|\, K_{X_{j}}+\Delta_{X_{j}}+\mu (\pi^{*}A)_{X_{j}}{\rm \;is \;nef}\},$$
then $\lambda'_{j}=\lambda_{i}$ for $k_{i}\leq j<k_{i+1}$, and 
\item[(iii)]
for any two indices $i<i'$ and any ${\mathbb{Q}}$-divisor $B$ on $Z_{i}$, we have $(\pi_{i}^{*}B)_{X_{k_{i'}}}=\pi_{i'}^{*}B_{Z_{i'}}$.
\end{enumerate}
Set $A'=\pi^{*}A$. 
Since $(X,0)$ is klt and $X$ is $\mathbb{Q}$-factorial, $(X_{k_{i}},0)$ is klt and $X_{k_{i}}$ is $\mathbb{Q}$-factorial. 
We also have $K_{X_{k_{i}}}+\Delta_{X_{k_{i}}}\sim_{\mathbb{Q}}\pi_{i}^{*}D_{Z_{i}}$ and $\Delta_{X_{k_{i}}}''\sim_{\mathbb{Q}}\pi_{i}^{*}E_{Z_{i}}$ by (iii) of the above properties. 
Taking a common resolution of $X\dashrightarrow X_{k_{i}}$, we see that $(X_{k_{i}}, \Delta_{X_{k_{i}}}=\Delta
'_{X_{k_{i}}}+\Delta''_{X_{k_{i}}})$ satisfies all the conditions of Lemma \ref{lemlogresol} with respect to $\pi_{i}$.
In this way we can replace $\pi:(X,\Delta) \to Z$ with $\pi_{i}:(X_{k_{i}},\Delta_{X_{k_{i}}})\to Z_{i}$ for some $i \gg 0$ and hence we may assume that $K_{X}+\Delta$ is nef (i.e., the $(K_{X}+\Delta)$-MMP terminates after finitely many steps) or the $(K_{X}+\Delta)$-MMP contains only flips. 
We may also assume that $K_{X_{k_{i}}}+\Delta_{X_{k_{i}}}+\lambda_{i}A'_{X_{k_{i}}}\sim_{\mathbb{Q}}\pi_{i}^{*}(D_{Z_{i}}+\lambda_{i}A_{Z_{i}})$ is semi-ample if $\lambda_{i}>0$. 

If $K_{X}+\Delta$ is nef, then any ample $\mathbb{Q}$-divisor satisfies the condition (iv) of the lemma. 
If $\lambda_{i}>0$ for any $i$, the divisor $A'$ satisfies the condition of the lemma because 
$K_{X}+\Delta+\lambda_{i}A'$ is movable and ${\rm lim}_{i \to \infty} \lambda_{i}=0$. 
In this way we can assume the condition (iv) of the lemma. 
So we are done. 
\end{proof}

\section{Proof of the main result}\label{secproofmainthm}

In this section we prove Theorem \ref{thmmain}. 
First we prove Theorem \ref{thmmain} assuming Proposition \ref{propcase1}, Proposition \ref{propcase2} and Proposition \ref{propcase3}. 
After that, we prove Proposition \ref{propcase1}, Proposition \ref{propcase2} and Proposition \ref{propcase3}.

\begin{proof}[Proof of Theorem \ref{thmmain}]
We prove it by induction on $d_{0}$. 
Clearly Theorem \ref{thmmain} holds when $d_{0}=0$. 
Fix an integer $d_{0}>0$ and assume Theorem \ref{thmmain} for $d_{0}-1$, and pick any $\pi:(X,\Delta)\to Z$ as in Theorem \ref{thmmain}. 
By Lemma \ref{lemreduction}, we can assume that $\pi:(X,\Delta) \to Z$ satisfies all the conditions of Lemma \ref{lemreduction}. 
Moreover we may assume that $E\neq0$ because otherwise Theorem \ref{thmmain} follows from Proposition  \ref{propkltcase} and the condition (iii) of Lemma \ref{lemlogresol}. 
Then we have the following three cases.

\begin{case}\label{case1}
$D$ is not big and $D-e E$ is pseudo-effective for a sufficiently small positive rational number $e$. 
\end{case}

\begin{case}\label{case2}
$D$ is not big and $D-e E$ is not pseudo-effective for any positive rational number $e$.
\end{case}

\begin{case}\label{case3}
$D$ is big.
\end{case}
But then Theorem \ref{thmmain} follows from Proposition \ref{propcase1}, Proposition \ref{propcase2} and Proposition \ref{propcase3} below. 
So we are done. 
\end{proof}

Next we prove Proposition \ref{propcase1}, Proposition \ref{propcase2} and Proposition \ref{propcase3}.
From now on we freely use the notations of the conditions (ii) and (iii) of Lemma \ref{lemreduction}.

\begin{prop}\label{propcase1}
Fix a positive integer $d_{0}$ and assume Theorem \ref{thmmain} for $d_{0}-1$. 
Let $\pi:(X,\Delta) \to Z$ be as in Theorem \ref{thmmain} satisfying all conditions of Lemma \ref{lemreduction}.
Let $D$ and $E$ be as in the condition (iii) of Lemma \ref{lemreduction}. 

If $D$ is not big and $D-e E$ is pseudo-effective for a sufficiently small positive rational number $e$, then Theorem \ref{thmmain} holds for $(X,\Delta)\to Z$. 
\end{prop}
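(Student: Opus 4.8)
The plan is to modify $(X,\Delta)$ birationally so that it becomes a relatively trivial log canonical pair over a base of dimension strictly smaller than ${\rm dim}\,Z$, and then to apply the induction hypothesis, i.e.\ Theorem \ref{thmmain} for $d_{0}-1$. Throughout I use the notation of conditions (ii), (iii) of Lemma \ref{lemreduction}.

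\emph{Perturbed pairs.} Since $D$ is not big, $D-uE$ is not big for any $u\geq 0$ (otherwise $D=(D-uE)+uE$ would be big), and since $D$ and $D-eE$ are pseudo-effective, convexity of the pseudo-effective cone gives that $D-uE$ is pseudo-effective for every $u\in[0,e]$. For rational $u\in(0,e]$ the divisor $\Delta-u\Delta''=\Delta'+(1-u)\Delta''$ is effective, $K_{X}+\Delta-u\Delta''\sim_{\mathbb{Q}}\pi^{*}(D-uE)\sim_{\mathbb{Q},\,Z}0$ is pseudo-effective, and by condition (iii) of Lemma \ref{lemlogresol} every lc center of $(X,\Delta-u\Delta'')$ dominates $Z$. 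Hence Proposition \ref{propkltcase} (applicable since ${\rm dim}\,Z\leq d_{0}$) gives a good minimal model of $(X,\Delta-u\Delta'')$. By \cite[Corollary 3.2]{fg-bundle} there is also a $\mathbb{Q}$-divisor $\Psi_{u}$ on $Z$ with $(Z,\Psi_{u})$ klt and $D-uE\sim_{\mathbb{Q}}K_{Z}+\Psi_{u}$, and since $\Delta-u\Delta''$ is affine linear in $u$, the canonical bundle formula can be arranged so that $\Psi_{u}$ varies piecewise linearly with $u$ on $(0,e]$.

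\emph{Running and lifting an MMP, and transfer.} Fix such a $u$ and run a $(D-uE)$-MMP on $Z$ with scaling of a sufficiently large general ample $\mathbb{Q}$-divisor $A$; as $(Z,\Psi_{u})$ is klt with pseudo-effective log canonical divisor, \cite[Corollary 1.4.2]{bchm} makes it terminate at a good minimal model $Z\dashrightarrow Z'$ with $(D-uE)_{Z'}$ semi-ample, defining a contraction $g\colon Z'\to W$ with $(D-uE)_{Z'}\sim_{\mathbb{Q}}g^{*}H$ for an ample $\mathbb{Q}$-divisor $H$ on $W$ and ${\rm dim}\,W\leq {\rm dim}\,Z-1$ because $D-uE$ is not big. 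Applying Lemma \ref{lemliftmmp} to $(X,0)$ klt with $K_{X}+(\Delta-u\Delta'')\sim_{\mathbb{Q}}\pi^{*}(D-uE)$, this lifts to a sequence of steps of the $(K_{X}+\Delta-u\Delta'')$-MMP $\phi\colon X\dashrightarrow X'$ together with $\pi'\colon X'\to Z'$ satisfying $K_{X'}+\Delta_{X'}-u\Delta''_{X'}\sim_{\mathbb{Q}}\pi'^{*}(D-uE)_{Z'}$ and $\Delta''_{X'}\sim_{\mathbb{Q}}\pi'^{*}E_{Z'}$; in particular $(X',\Delta_{X'}-u\Delta''_{X'})$ is a good minimal model of $(X,\Delta-u\Delta'')$. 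Moreover every extremal ray $R$ contracted by $\phi$ is horizontal over $Z$ and is contracted by the $(D-tE)$-MMP for all small $t>0$, so $(K_{X}+\Delta)\cdot R=\lim_{t\to 0^{+}}(K_{X}+\Delta-t\Delta'')\cdot R\leq 0$; thus $\phi$ is a $(K_{X}+\Delta)$-nonpositive birational contraction, $a(P,X,\Delta)\leq a(P,X',\Delta_{X'})$ for every prime divisor $P$ over $X$, and $(X',\Delta_{X'})$ is a log birational model of $(X,\Delta)$ through which existence of a good minimal model (equivalently, of a weak lc model with semi-ample log canonical divisor, cf.~the reduction in Lemma \ref{lemreduction}) transfers back to $(X,\Delta)$.

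\emph{Choosing $u$, and conclusion.} The crucial point is to arrange that $K_{X'}+\Delta_{X'}$ itself, not merely $K_{X'}+\Delta_{X'}-u\Delta''_{X'}$, is trivial over $W$. By finiteness of models and of ample models \cite{bchm} for the bounded (piecewise linear) family $\{\,D-uE\sim_{\mathbb{Q}}K_{Z}+\Psi_{u}\,\}_{u\in(0,e]}$, there are rational numbers $u<u'$ in $(0,e]$ admitting a common minimal model $Z'$ and a common ample model $g\colon Z'\to W$; then both $(D-uE)_{Z'}$ and $(D-u'E)_{Z'}$ are $\mathbb{Q}$-linearly equivalent to pullbacks via $g$ of ample divisors on $W$, so $(u'-u)E_{Z'}$, hence $E_{Z'}$, hence $D_{Z'}=(D-uE)_{Z'}+uE_{Z'}$, is $\sim_{\mathbb{Q},\,W}0$. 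Carrying out the previous paragraph with this $u$ gives $\pi'\colon X'\to Z'$ with $K_{X'}+\Delta_{X'}\sim_{\mathbb{Q}}\pi'^{*}D_{Z'}$, and composing with $g$ yields a projective surjective morphism $X'\to W$ with ${\rm dim}\,W\leq d_{0}-1$ and $K_{X'}+\Delta_{X'}\sim_{\mathbb{Q},\,W}0$. Since $K_{X'}+\Delta_{X'}$ is the birational transform of the pseudo-effective $K_{X}+\Delta$ under a birational contraction it is pseudo-effective, so Theorem \ref{thmmain} for $d_{0}-1$ provides a good minimal model of $(X',\Delta_{X'})$, which by the transfer above is a good minimal model of $(X,\Delta)$. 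I expect the main obstacles to be exactly the two technical points flagged here: running the $(D-uE)$-MMP uniformly for a whole interval of parameters $u$ and extracting from finiteness of models a subinterval on which the contraction $W$ is constant, and verifying that the lifted program on $X$ is $(K_{X}+\Delta)$-nonpositive so that good minimal models can be pulled back to $(X,\Delta)$.
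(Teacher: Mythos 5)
Your high-level strategy is the same as the paper's: perturb to $\Delta-u\Delta''$, descend to a good minimal model of $D-uE$ on the base, observe that two nearby values of $u$ sharing an ample model $g\colon Z'\to W$ force $E_{Z'}\sim_{\mathbb{Q},\,W}0$ and hence $D_{Z'}\sim_{\mathbb{Q},\,W}0$ with $\dim W<\dim Z$, and invoke Theorem \ref{thmmain} for $d_{0}-1$. But the two points you flag as ``technical'' are exactly where the proof lives, and the tools you cite for them do not apply. First, finiteness of models and of ample models in \cite{bchm} is proved only for boundaries containing a general ample divisor (equivalently, in the big case); here $D-uE$ is not big by hypothesis, $\Psi_{u}$ carries no ample component, and finiteness of ample models for the family $\{K_{Z}+\Psi_{u}\}_{u\in(0,e]}$ is not available. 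The paper gets the ``two values of $u$ with a common ample model'' conclusion without any finiteness statement: it fixes one $a_{1}$, perturbs by $0<u\ll a_{1}$, and uses the length-of-extremal-rays argument to show the second MMP is also a $(K_{X_{1}}+\Delta_{X_{1}})$-MMP preserving semi-ampleness of $D-a_{1}E$, so that $D_{Z'_{1}}-(a_{1}-u_{i})E_{Z'_{1}}$ is semi-ample and non-big for two values $u_{1}<u_{2}$ on the \emph{same} model; the curve computation then identifies the two Stein factorizations. (Similarly, the termination you attribute to \cite[Corollary 1.4.2]{bchm} needs a big boundary; the correct route is the hypothesis of Theorem \ref{thmmain} in dimension $\dim Z$ plus \cite[Theorem 4.1 (iii)]{birkar-flip}, as in Step 1 of the paper's proof.)

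Second, your transfer back to $(X,\Delta)$ is unjustified for a single fixed $u$. An extremal ray contracted in the $(K_{X}+\Delta-u\Delta'')$-MMP is $(D_{Z_{i}}-uE_{Z_{i}})$-negative on the base, but $D_{Z_{i}}$ (pseudo-effective) can still be \emph{positive} on that ray, so the step can strictly increase $K_{X}+\Delta$ and decrease discrepancies; your limit $\lim_{t\to0^{+}}(K_{X}+\Delta-t\Delta'')\cdot R\leq 0$ presupposes that the same ray is contracted by the $(D-tE)$-MMP for all small $t$, which is the uniformity you have not established. For the same reason there is no guarantee that $(X',\Delta_{X'})$ is log canonical, which you need in order to apply Theorem \ref{thmmain} for $d_{0}-1$ to $(X',\Delta_{X'})\to W$. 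The paper resolves both issues only by working with an entire sequence $a_{n}\to 0$: Lemma \ref{lemd-mmp} builds one $D$-MMP with scaling that is simultaneously a $(K_{Z}+\Psi_{n}+a_{n}A')$-MMP for every $n$ (with $A'$ big, so \cite{bchm} applies); the ACC for log canonical thresholds (Theorem \ref{thmacclct}) produces infinitely many $n$ with $(X_{n},\Delta_{X_{n}})$ log canonical; the $N_{\sigma}$ computation together with condition (iv) of Lemma \ref{lemreduction} makes all the $X_{n}$ isomorphic in codimension one; and the discrepancy inequality for $(X,\Delta)$ itself is obtained by passing to the limit $a_{n}\to 0$ in $p^{*}(K_{X}+\Delta-a_{n}\Delta'')-q^{*}(K_{X''}+\Delta_{X''}-a_{n}\Delta''_{X''})\geq 0$ on a common resolution. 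Without substitutes for these steps your argument does not close.
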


\begin{proof}
We can assume $E\neq0$. 
We prove it with several steps.

\begin{step2}\label{step1incase1}
In this step we construct a diagram used in the proof. 
 
Fix a strictly decreasing infinite sequence of rational numbers $\{a_{n}\}_{n\geq1}$ such that $0<a_{n}<e$ for any $n$ and ${\rm lim}_{n \to \infty}a_{n}=0$. 
Then $D-a_{n}E$ is pseudo-effective for all $n\geq 1$. 
By \cite[Corollary 3.2]{fg-bundle} and the definition of $D$ and $E$, there are $\mathbb{Q}$-divisors $\Psi_{n}$ on $Z$ such that all $(Z,\Psi_{n})$ are klt and $D-a_{n}E \sim_{\mathbb{Q}}K_{Z}+\Psi_{n}$.
Then $K_{X}+\Delta-a_{n}\Delta'' \sim_{\mathbb{Q}} \pi^{*}(K_{Z}+\Psi_{n})$.

By the hypothesis of Theorem \ref{thmmain}, $(Z,\Psi_{n})$ has a  good minimal model. 
By  \cite[Theorem 4.1 (iii)]{birkar-flip} and running the $(K_{Z}+\Psi_{n})$-MMP with scaling, we get a good minimal model $(Z,\Psi_{n})\dashrightarrow \bigl(Z_{n},(\Psi_{n})_{Z_{n}}\bigr)$ of $(Z,\Psi_{n})$.
By Lemma \ref{lemliftmmp}, we obtain the following diagram 
$$
\xymatrix{
(X,\Delta-a_{n}\Delta'') \ar[d]_{\pi}\ar@{-->}[r]^{\!\!\!\!\!\!\!\!\phi_{n}}&(X_{n},\Delta_{X_{n}}-a_{n}\Delta''_{X_{n}})\ar[d]_{\pi_{n}}\\
Z\ar@{-->}[r]&Z_{n}
}
$$
such that $\phi_{n}$ is a sequence of finitely many steps of the $(K_{X}+\Delta-a_{n}\Delta'')$-MMP with scaling. 
Then $K_{X_{n}}+\Delta_{X_{n}}-a_{n}\Delta_{X_{n}}''\sim_{\mathbb{Q}} \pi_{n}^{*}\bigl(K_{Z_{n}}+(\Psi_{n})_{Z_{n}}\bigr)$ 
by the condition (iii) of Lemma \ref{lemliftmmp}. 
Therefore $K_{X_{n}}+\Delta_{X_{n}}-a_{n}\Delta_{X_{n}}''$ is semi-ample and thus $(X_{n},\Delta_{X_{n}}-a_{n}\Delta''_{X_{n}})$ is a good minimal model of $(X,\Delta-a_{n}\Delta'')$.
We note that $X_{n}$ is $\mathbb{Q}$-factorial and $(X_{n},0)$ is klt, and $\pi_{n}:X_{n} \to Z_{n}$ is projective and surjective with connected fibers by the condition (i) of Lemma \ref{lemliftmmp}. 
We also note that $K_{X_{n}}+\Delta_{X_{n}}-t\Delta_{X_{n}}'' \sim_{\mathbb{Q}} \pi_{n}^{*}(D_{Z_{n}}-t E_{Z_{n}})$ for any $t\geq 0$ by the condition (iii) of Lemma \ref{lemliftmmp}. 
\end{step2}

\begin{step2}\label{step2incase1}
In this step we prove that there are infinitely many indices $n$ such that $(X_{n},\Delta_{X_{n}})$ is lc.

Suppose by contradiction that there are only finitely many indices $n$ such that $(X_{n},\Delta_{X_{n}})$ is lc. 
Fix $n_{0}$ such that $(X_{i},\Delta_{X_{i}})$ is not lc for every $i \geq n_{0}$. 
Consider 
$$I=\{M \in \mathbb{R}_{\geq0}\,|\, M ={\rm lct}(X_{i},\Delta'_{X_{i}};\Delta_{X_{i}}''),\; i\geq n_{0}\}$$
where $\Delta'_{X_{i}}$ is the birational transform of $\Delta'$ on $X_{i}$. 
Then $I$ does not contain one by our assumption. 
On the other hand, since $\Delta'_{X_{i}}$ is the birational transform of $\Delta'$ on $X_{i}$, any coefficient of component in $\Delta'_{X_{i}}$ is in a finite set which does not depend on $i$.  
Moreover $\Delta_{X_{i}}''$ is a reduced divisor and ${\rm lct}(X_{i},\Delta'_{X_{i}};\Delta_{X_{i}}'')\geq 1-a_{i}$ by construction. 
Since ${\rm lim}_{n \to \infty}a_{n}=0$, by the ACC for log canonical thresholds (cf.~Theorem \ref{thmacclct}), the set $I$ must contain one. 
In this way we get a contradiction and thus there are infinitely many indices $n$ such that $(X_{n},\Delta_{X_{n}})$ is lc.
\end{step2}

\begin{step2}\label{step3incase1}
By taking a common resolution of $\phi_{n}$, we can check that $(X_{n}, \Delta_{X_{n}}=\Delta
'_{X_{n}}+\Delta''_{X_{n}})$ satisfies all the conditions of Lemma \ref{lemlogresol} with respect to $\pi_{n}$.
In this way, by Step \ref{step1incase1} and Step \ref{step2incase1} we can get a strictly decreasing infinite sequence $\{a_{n}\}_{n\geq1}$ of positive rational numbers such that 
\begin{enumerate}
\item[(i)]
$a_{n}<e$ for any $n$ and ${\rm lim}_{n\to \infty}a_{n}=0$, and 
\item[(ii)]
for any $n\geq1$, there is a diagram
$$
\xymatrix{
(X,\Delta-a_{n}\Delta'') \ar[d]_{\pi}\ar@{-->}[r]^{\!\!\!\!\!\!\!\!\!\!\phi_{n}}&(X_{n},\Delta_{X_{n}}-a_{n}\Delta''_{X_{n}})\ar[d]_{\pi_{n}}\\
Z\ar@{-->}[r]&Z_{n}
}
$$
such that 
\begin{enumerate}
\item[(ii-a)]
$X_{n}$ and $Z_{n}$ are $\mathbb{Q}$-factorial, $(X_{n},\Delta_{X_{n}})$ is lc, $(X_{n},0)$ is klt and $\pi_{n}$ is a projective surjective morphism with connected fibers,
\item[(ii-b)]
$(X_{n}, \Delta_{X_{n}}=\Delta'_{X_{n}}+\Delta''_{X_{n}})$ satisfies all the conditions of Lemma \ref{lemlogresol} with respect to $\pi_{n}$, $K_{X_{n}}+\Delta_{X_{n}} \sim_{\mathbb{Q}}\pi_{n}^{*}D_{Z_{n}}$ and $\Delta''_{X_{n}}\sim_{\mathbb{Q}}\pi_{n}^{*}E_{Z_{n}}$, and 
\item[(ii-c)]
$\phi_{n}$ is a sequence of finitely many steps of the $(K_{X}+\Delta-a_{n}\Delta'')$-MMP to a good minimal model $(X_{n},\Delta_{X_{n}}-a_{n}\Delta''_{X_{n}})$.
\end{enumerate} 
\end{enumerate} 
By replacing $\{a_{n}\}_{n\geq1}$ with its subsequence again, we also have that  
\begin{enumerate}
\item[(ii-d)]
$X_{i}$ and $X_{j}$ are isomorphic in codimension one for any $i$ and $j$.
\end{enumerate}
Indeed, for any $n\geq1$, every prime divisor contracted by $\phi_{n}$ is a component of $N_{\sigma}(K_{X}+\Delta-a_{n}\Delta'')$. 
Moreover $K_{X}+\Delta-e\Delta''$ is pseudo-effective by the choice of $e$. 
Therefore, by the basic property of $N_{\sigma}(\,\cdot\,)$, we have 
\begin{equation*}
\begin{split}
&N_{\sigma}(K_{X}+\Delta-a_{n}\Delta'')\\
&\leq \Bigl(1-\frac{a_{n}}{e} \Bigr) N_{\sigma}(K_{X}+\Delta)+\frac{a_{n}}{e}N_{\sigma}(K_{X}+\Delta-e\Delta'').
\end{split}
\end{equation*}
Thus every prime divisor contracted by $\phi_{n}$ is also a component of $N_{\sigma}(K_{X}+\Delta)+N_{\sigma}(K_{X}+\Delta-e\Delta'')$, which does not depend on $n$. 
Therefore we can replace $\{a_{n}\}_{n\geq1}$ with its subsequence so that $X_{i}$ and $X_{j}$ are isomorphic in codimension one for any $i$ and $j$.

We note that $D_{Z_{n}}-(a_{n}-\delta)E_{Z_{n}}$ is not big for any sufficiently small rational number $\delta >0$. 
Indeed, the birational map $Z \dashrightarrow Z_{n}$ is a sequence of finitely many steps of the $(D-a_{n}E)$-MMP. 
Then $Z \dashrightarrow Z_{n}$ is also  a sequence of finitely many steps of the $\bigl(D-(a_{n}-\delta)E\bigr)$-MMP for any sufficiently small $\delta>0$. 
Since $D-tE$ is not big for any $t\geq0$, we see that $D_{Z_{n}}-(a_{n}-\delta)E_{Z_{n}}$ is not big for any sufficiently small $\delta>0$. 
\end{step2}

\begin{step2}\label{step4incase1}
Suppose that $(X_{1},\Delta_{X_{1}})$ has a good minimal model. 
Then we can show that $(X,\Delta)$ has a weak lc model with semi-ample log canonical divisor. 
Indeed, by \cite[Theorem 4.1 (iii)]{birkar-flip}, any $(K_{X_{1}}+\Delta_{X_{1}})$-MMP with scaling of an ample divisor terminates. 
So we can get a sequence of finitely many steps of the $(K_{X_{1}}+\Delta_{X_{1}})$-MMP to a good minimal model $\psi:(X_{1},\Delta_{X_{1}})\dashrightarrow (X',\Delta_{X'})$.  
By construction $K_{X'}+\Delta_{X'}$ is semi-ample. 
Fix a sufficiently small positive rational number $t\ll a_{1}$.
Then $\psi$ is also a sequence of finitely many steps of the $(K_{X_{1}}+\Delta_{X_{1}}-t\Delta_{X_{1}}'')$-MMP.
We note that any lc center of $(X_{1},\Delta_{X_{1}}-t\Delta_{X_{1}}'')$ dominates $Z_{1}$.
By Proposition \ref{propkltcase}, $(X_{1},\Delta_{X_{1}}-t\Delta_{X_{1}}'')$ has a good minimal model, and thus $(X', \Delta_{X'}-t\Delta_{X'}'')$ has a good minimal model.
Therefore we can run the $(K_{X'}+\Delta_{X'}-t\Delta''_{X'})$-MMP with scaling of an ample divisor and obtain a good minimal model 
$\psi':(X', \Delta_{X'}-t\Delta_{X'}'')\dashrightarrow (X'', \Delta_{X''}-t\Delta_{X''}'').$
Now we get the following sequence of birational maps 
$$X \overset{\phi_{1}}{\dashrightarrow} X_{1}\overset{\psi}{\dashrightarrow} X'\overset{\psi'}{\dashrightarrow} X'',$$
where $\phi_{1}$ (resp.~$\psi$, $\psi'$) is a sequence of steps the $(K_{X}+\Delta-a_{1}\Delta'')$-MMP (resp.~the $(K_{X_{1}}+\Delta_{X_{1}})$-MMP, the $(K_{X'}+\Delta_{X'}-t\Delta''_{X'})$-MMP) to a good minimal model.
Since we pick $t>0$ sufficiently small, by the standard argument of the length of extremal rays, we see that $K_{X''}+\Delta_{X''}$ is also semi-ample (see, for example, the proof of \cite[Proposition 3.2 (5)]{birkar-existII} or the proof of \cite[Theorem 1.2]{has-ab}). 

We prove that $X_{1}$ and $X''$ are isomorphic in codimension one. 
More precisely, we prove that both $\psi$ and $\psi'$ contain only flips. 
We note that $\phi_{1}$ is in particular a birational contraction. 
Recall that there is a $\mathbb{Q}$-divisor $A$ on $X$ such that $K_{X}+\Delta+\delta A$ is movable for any sufficiently small $\delta>0$, which is the condition (iv) of Lemma \ref{lemreduction}.  
Therefore we see that $K_{X_{1}}+\Delta_{X_{1}}+\delta A_{X_{1}}$ is movable for any sufficiently small $\delta>0$. 
Then $N_{\sigma}(K_{X_{1}}+\Delta_{X_{1}})=0$, and $\psi$ contains only flips.
Moreover $K_{X'}+\Delta_{X'}-a_{1}\Delta_{X'}''$ is movable because of the condition (ii-c) of Step \ref{step3incase1}. 
So $N_{\sigma}(K_{X'}+\Delta_{X'}-a_{1}\Delta_{X'}'')=0$.
We also have $N_{\sigma}(K_{X'}+\Delta_{X'})=0$ since $K_{X'}+\Delta_{X'}$ is semi-ample. 
Since $t$ satisfies $0<t<a_{1}$, we have
\begin{equation*}
\begin{split}
&N_{\sigma}(K_{X'}+\Delta_{X'}-t\Delta''_{X'})\\
&\leq \Bigl(1-\frac{t}{a_{1}}\Bigr) N_{\sigma}(K_{X'}+\Delta_{X'})+\frac{t}{a_{1}}N_{\sigma}(K_{X'}+\Delta_{X'}-a_{1}\Delta_{X'}'')\\
&=0
\end{split}
\end{equation*}
and $\psi'$ contains only flips. 
Thus we see that  $X_{1}$ and $X''$ are isomorphic in codimension one. 

Recall that $X_{i}$ and $X_{j}$ are isomorphic in codimension one, which is the condition (ii-d) of Step \ref{step3incase1}. 
Therefore $X_{n}$ and $X''$ are isomorphic in codimension one for any $n$. 
By the condition (i) in Step \ref{step3incase1}, we have $t\geq a_{n}$ for any $n \gg 0$, where $\{a_{n}\}_{n\geq1}$ is defined in Step \ref{step3incase1}. 
Since $K_{X''}+\Delta_{X''}$ and $K_{X''}+\Delta_{X''}-t\Delta''_{X''}$ are semi-ample, $K_{X''}+\Delta_{X''}-a_{n}\Delta''_{X''}$ is also semi-ample for any $n \gg 0$. 
Now we recall that $(X_{n},\Delta_{X_{n}}-a_{n}\Delta''_{X_{n}})$ is a good minimal model of $(X,\Delta-a_{n}\Delta'')$.
From these facts, we see that $(X'',\Delta_{X''}-a_{n}\Delta''_{X''})$ is also a good minimal model of $(X,\Delta-a_{n}\Delta'')$ for any $n\gg0$. 
Let $p:Y \to X$ and $q:Y \to X''$ be a common resolution of $X\dashrightarrow X''$. 
Then 
$$p^{*}(K_{X}+\Delta-a_{n}\Delta'')-q^{*}(K_{X''}+\Delta_{X''}-a_{n}\Delta_{X''}'')\geq0$$
for any $n\gg0$.
Since ${\rm lim}_{n \to \infty}a_{n}=0$, we have 
$$p^{*}(K_{X}+\Delta)-q^{*}(K_{X''}+\Delta_{X''})\geq0$$ 
by considering the limit. 
Since $K_{X''}+\Delta_{X''}$ is semi-ample, we see that $(X'',\Delta_{X''})$ is a weak lc model of $(X,\Delta)$ with semi-ample log canonical divisor. 

In this way, to prove Proposition \ref{propcase1}, we only have to prove that $(X_{1},\Delta_{X_{1}})$ has a good minimal model.
\end{step2}

\begin{step2}\label{step5incase1}
Finally we prove that $(X_{1},\Delta_{X_{1}})$ has a good minimal model. 
If $E_{Z_{1}}= 0$, then $K_{X_{1}}+\Delta_{X_{1}}=K_{X_{1}}+\Delta_{X_{1}}-a_{1}\Delta''_{X_{1}}$ is semi-ample. 
Therefore we may assume that $E_{Z_{1}}\neq 0$.  
Recall again that any lc center of $(X_{1},\Delta_{X_{1}}-t\Delta_{X_{1}}'')$ dominates $Z_{1}$ for any $0<t \leq a_{1}$.

Pick a sufficiently small positive rational number $u \ll a_{1}$. 
By \cite[Corollary 3.2]{fg-bundle}, there is a $\mathbb{Q}$-divisor $\Psi'$ on $Z_{1}$ such that $(Z_{1},\Psi')$ is klt and
$D_{Z_{1}}-(a_{1}-u)E_{Z_{1}} \sim_{\mathbb{Q}}K_{Z_{1}}+\Psi'$.
Then $(Z_{1},\Psi')$ has a  good minimal model by the hypothesis. 
Run the $(K_{Z_{1}}+\Psi')$-MMP with scaling of an ample divisor. 
By \cite[Theorem 4.1 (iii)]{birkar-flip}, we obtain a good minimal model $(Z_{1},\Psi')\dashrightarrow(Z'_{1},\Psi'_{Z'_{1}})$ of $(Z_{1},\Psi')$.
Furthermore, by Lemma \ref{lemliftmmp}, we obtain the following diagram 
$$
\xymatrix{
\bigl(X_{1},\Delta_{X_{1}}-(a_{1}-u)\Delta''_{X_{1}}\bigr) \ar[d]_{\pi_{1}}\ar@{-->}[r]^{\phi}&\bigl(X'_{1},\Delta_{X'_{1}}-(a_{1}-u)\Delta''_{X'_{1}}\bigr)\ar[d]_{\pi'_{1}}\\
Z_{1}\ar@{-->}[r]&Z'_{1}
}
$$
such that the upper horizontal birational map $\phi$ is a sequence of finitely many steps of the $\bigl(K_{X_{1}}+\Delta_{X_{1}}-(a_{1}-u)\Delta''_{X_{1}} \bigr)$-MMP to a good minimal model. 
Since $K_{X_{1}}+\Delta_{X_{1}}-a_{1}\Delta''_{X_{1}}$ is semi-ample and $u$ is sufficiently small, by the standard argument of the length of extremal rays, we see that $\phi$ is also a sequence of finitely many steps of the $(K_{X_{1}}+\Delta_{X_{1}})$-MMP and $K_{X'_{1}}+\Delta_{X'_{1}}-a_{1}\Delta''_{X'_{1}}$ is semi-ample (cf.~\cite[Proposition 3.2 (5)]{birkar-existII}). 
From this we also see that $(X'_{1},\Delta_{X'_{1}})$ is lc.
Now we have 
\begin{equation*}
\begin{split}
K_{X'_{1}}+\Delta_{X'_{1}}-(a_{1}-u)\Delta''_{X'_{1}}&\sim_{\mathbb{Q}}\pi_{1}'^{*}\bigl(D_{Z'_{1}}-(a_{1}-u)E_{Z'_{1}}\bigr) {\rm \quad and} \\ K_{X'_{1}}+\Delta_{X'_{1}}-a_{1}\Delta_{X'_{1}}''&\sim_{\mathbb{Q}}\pi_{1}'^{*}(D_{Z'_{1}}-a_{1}E_{Z'_{1}}),
\end{split}
\end{equation*}
where $D_{Z'_{1}}-(a_{1}-u)E_{Z'_{1}}$ and $D_{Z'_{1}}-a_{1}E_{Z'_{1}}$ are semi-ample.

Recall that $D_{Z_{1}}-(a_{1}-\delta)E_{Z_{1}}$ is not big for any sufficiently small rational number $\delta>0$. 
Therefore $D_{Z'_{1}}-(a_{1}-\delta)E_{Z'_{1}}$ is not big for any sufficiently small rational number $\delta>0$. 
Pick two sufficiently small rational numbers $u_{1}$ and $u_{2}$ satisfying $0<u_{1}<u_{2}<u$. 
Then we see that $D_{Z'_{1}}-(a_{1}-u_{i})E_{Z'_{1}}$ is semi-ample for $i=1,2$ because these are represented by a $\mathbb{Q}_{>0}$-linear combination of $D_{Z'_{1}}-(a_{1}-u)E_{Z'_{1}}$ and $D_{Z'_{1}}-a_{1}E_{Z'_{1}}$. 
Moreover $D_{Z'_{1}}-(a_{1}-u_{i})E_{Z'_{1}}$ is not big. 
For $i=1,2$, let $f_{i}:Z'_{1} \to W_{i}$ be the Stein factorization of the projective morphism induced by $D_{Z'_{1}}-(a_{1}-u_{i})E_{Z'_{1}}$.
Then $W_{1}\simeq W_{2}$. 
Indeed, let $C$ be a curve on $Z'_{1}$. 
Then 
\begin{equation*}
\begin{split}
&\,C{\rm \; is \;contracted\;by\;}f_{1} \\
\Leftrightarrow &\,C\cdot \bigl(D_{Z'_{1}}-(a_{1}-u_{1})E_{Z'_{1}}\bigr)=0\\
\Leftrightarrow &\,C\cdot (D_{Z'_{1}}-a_{1}E_{Z'_{1}})=C\cdot \bigl(D_{Z'_{1}}-(a_{1}-u)E_{Z'_{1}}\bigr)=0\\
\Leftrightarrow &\,C\cdot \bigl(D_{Z'_{1}}-(a_{1}-u_{2})E_{Z'_{1}} \bigr)=0\\
\Leftrightarrow &\,C {\rm \; is \;contracted\;by\;}f_{2}.
\end{split}
\end{equation*}
Thus $W_{1} \simeq W_{2}$. 
Set $f:Z'_{1} \to W=W_{1}\simeq W_{2}$.
By construction we have ${\rm dim}\, W<{\rm dim}\,Z'_{1}$ and $D_{Z'_{1}}-(a_{1}-u_{i})E_{Z'_{1}}\sim_{\mathbb{Q},\,W}0$  for $i=1,2$. 
Then $E_{Z'_{1}}\sim_{\mathbb{Q},\,W}0$, and moreover $D_{Z'_{1}}\sim_{\mathbb{Q},\,W}0$. 
Therefore $K_{X'_{1}}+\Delta_{X'_{1}} \sim_{\mathbb{Q},\,W}0$. 
Since we assume Theorem \ref{thmmain} for $d_{0}-1$, by applying this hypothesis to $f\circ \pi'_{1}:(X'_{1},\Delta_{X'_{1}}) \to W$, we see that $(X'_{1},\Delta_{X'_{1}})$ has a good minimal model. 
Then $(X_{1},\Delta_{X_{1}})$ also has a good minimal model by construction.
Thus we complete the proof. 
\end{step2}
\end{proof}

\begin{rema}\label{remproofcase1}
In the proof of Proposition \ref{propcase1}, we use the condition that $D-e E$ is pseudo-effective from the start. 
On the other hand, we do not use the condition that $D$ is not big until the final part of Step \ref{step5incase1}. 
Therefore we can use the same discussions as in Step \ref{step1incase1}-\ref{step4incase1} and the first half of Step \ref{step5incase1} to prove Theorem \ref{thmmain} in Case \ref{case3}.
\end{rema}

\begin{prop}\label{propcase2}
Fix a positive integer $d_{0}$ and assume Theorem \ref{thmmain} for $d_{0}-1$. 
Let $\pi:(X,\Delta) \to Z$ be as in Theorem \ref{thmmain} satisfying all conditions of Lemma \ref{lemreduction}.
Let $D$ and $E$ be as in the condition (iii) of Lemma \ref{lemreduction}. 

If $D$ is not big and $D-e E$ is not pseudo-effective for any positive rational number $e$, then Theorem \ref{thmmain} holds for $(X,\Delta)\to Z$. 
\end{prop}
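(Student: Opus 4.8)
We may assume $E\ne 0$, and I would follow the pattern of Steps 1 and 2 of the proof of Proposition \ref{propcase1}, replacing ``good minimal model'' by ``Mori fiber space'' at the level of $Z$. Fix a strictly decreasing sequence $\{a_n\}_{n\ge1}$ of rational numbers with $0<a_n<1$ and ${\rm lim}_{n\to\infty}a_n=0$. For each $n$ the divisor $D-a_nE$ is not pseudo-effective by hypothesis, and since every lc center of $(X,\Delta-a_n\Delta'')$ dominates $Z$, the canonical bundle formula \cite[Corollary 3.2]{fg-bundle} gives a klt pair $(Z,\Psi_n)$ with $D-a_nE\sim_{\mathbb{Q}}K_Z+\Psi_n$; thus $K_Z+\Psi_n$ is not pseudo-effective. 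Running the $(K_Z+\Psi_n)$-MMP with scaling of a suitable general ample divisor --- which, exactly as in the proof of Lemma \ref{lemd-mmp}, is also a $(D-a_nE)$-MMP with scaling --- we reach by \cite[Corollary 1.4.2]{bchm} a Mori fiber space $Z\dashrightarrow Z_n$ with a contraction $g_n\colon Z_n\to W_n$ such that $\dim W_n<\dim Z$, $\rho(Z_n/W_n)=1$, and $D_{Z_n}-a_nE_{Z_n}$ is anti-ample over $W_n$. By Lemma \ref{lemliftmmp} this lifts to a diagram
$$
\xymatrix{
(X,\Delta-a_{n}\Delta'')\ar[d]_{\pi}\ar@{-->}[r]^{\ \phi_{n}}&(X_{n},\Delta_{X_{n}}-a_{n}\Delta''_{X_{n}})\ar[d]_{\pi_{n}}\\
Z\ar@{-->}[r]&Z_{n}
}
$$
in which $\phi_n$ is a finite sequence of steps of the $(K_X+\Delta-a_n\Delta'')$-MMP (in particular a birational contraction), $\pi_n$ is projective surjective with connected fibers, $X_n$ is $\mathbb{Q}$-factorial, $(X_n,0)$ is klt, $K_{X_n}+\Delta_{X_n}-t\Delta''_{X_n}\sim_{\mathbb{Q}}\pi_n^*(D_{Z_n}-tE_{Z_n})$ for all $t\ge0$, and (taking a common resolution of $\phi_n$) $(X_n,\Delta_{X_n}=\Delta'_{X_n}+\Delta''_{X_n})$ satisfies all the conditions of Lemma \ref{lemlogresol} with respect to $\pi_n$. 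By the ACC for log canonical thresholds (Theorem \ref{thmacclct}), as in Step 2 of the proof of Proposition \ref{propcase1}, after passing to a subsequence we may assume $(X_n,\Delta_{X_n})$ is lc for every $n$.

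Next I would read off the Mori fibration $g_n$. Since $Z\dashrightarrow Z_n$ is a sequence of steps of the $(D-a_nE)$-MMP and $E$ is effective, $D_{Z_n}$ is pseudo-effective and not big (the volume does not increase along the relevant flips and divisorial contractions); in particular $E_{Z_n}\ne0$, since otherwise $D_{Z_n}$ would be simultaneously pseudo-effective and anti-ample over $W_n$. Restricting to a general fiber $F$ of $g_n$, the classes $D_{Z_n}|_F$ and $E_{Z_n}|_F$ are proportional because $\rho(Z_n/W_n)=1$, with $E_{Z_n}|_F$ ample; hence $D_{Z_n}|_F\equiv c_nE_{Z_n}|_F$ for a rational number $c_n$ with $0\le c_n<a_n$, and, $g_n$ being a contraction, $D_{Z_n}-c_nE_{Z_n}\equiv_{W_n}0$ yields $D_{Z_n}-c_nE_{Z_n}\sim_{\mathbb{Q},\,W_n}0$, so that
$$K_{X_n}+\Delta_{X_n}-c_n\Delta''_{X_n}\sim_{\mathbb{Q},\,W_n}0.$$
Because $\dim W_n<\dim Z\le d_0$, the induction hypothesis (Theorem \ref{thmmain} for $d_0-1$), applied to $g_n\circ\pi_n\colon(X_n,\Delta_{X_n}-c_n\Delta''_{X_n})\to W_n$, shows that $(X_n,\Delta_{X_n}-c_n\Delta''_{X_n})$ has a good minimal model or a Mori fiber space.

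If $K_{X_n}+\Delta_{X_n}-c_n\Delta''_{X_n}$ is pseudo-effective (so that the pair above has a good minimal model), then $D_{Z_n}-c_nE_{Z_n}$ is pseudo-effective on $Z_n$, and I would argue as follows: when $c_n>0$ the morphism $\pi_n\colon(X_n,\Delta_{X_n})\to Z_n$ satisfies all the hypotheses of Proposition \ref{propcase1} (with $c_n$ in place of $e$, using that $D_{Z_n}$ is not big and that condition (iv) of Lemma \ref{lemreduction} is preserved under the birational contraction $\phi_n$), while when $c_n=0$ already $K_{X_n}+\Delta_{X_n}\sim_{\mathbb{Q},\,W_n}0$, so Theorem \ref{thmmain} for $d_0-1$ applies directly; either way $(X_n,\Delta_{X_n})$ has a good minimal model, and since $\phi_n$ is a birational contraction with $p^*(K_X+\Delta)\ge q^*(K_{X_n}+\Delta_{X_n})$ on a suitable common resolution, a weak lc model of $(X_n,\Delta_{X_n})$ with semi-ample log canonical divisor is also one for $(X,\Delta)$, which proves Theorem \ref{thmmain} for $(X,\Delta)\to Z$ by the reduction in Lemma \ref{lemreduction}. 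The remaining possibility --- $K_{X_n}+\Delta_{X_n}-c_n\Delta''_{X_n}$ not pseudo-effective for every $n$ in the subsequence --- is where I expect the real difficulty: writing $D_{Z_n}-c_nE_{Z_n}\sim_{\mathbb{Q}}g_n^*G_n$, the divisor $G_n$ on $W_n$ is then not pseudo-effective, so $(X_n,\Delta_{X_n}-c_n\Delta''_{X_n})\to W_n$ is again of the type treated by Theorem \ref{thmmain} but over the strictly smaller base $W_n$; I would handle this by a descending induction on the dimension of the base --- legitimate because the hypothesis becomes vacuous when the base is a point --- rerunning the construction and the split into the analogues of Cases \ref{case1}, \ref{case2}, \ref{case3} at each stage, and then propagating a good minimal model back up the resulting tower of birational contractions to $(X,\Delta)$. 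The main obstacle is organizing this descending induction together with the reductions of Lemma \ref{lemreduction} at each stage, and keeping track of the birational transforms of $D$ and $E$ and of the log canonical condition throughout.
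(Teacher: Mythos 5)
Your Step~1 tracks the paper's: you run the $(K_{Z}+\Psi_{n})$-MMP to a Mori fiber space $Z_{n}\to W_{n}$, lift it by Lemma \ref{lemliftmmp}, use the ACC for log canonical thresholds to arrange that $(X_{n},\Delta_{X_{n}})$ is lc, and identify the number $c_{n}\in[0,a_{n})$ with $D_{Z_{n}}-c_{n}E_{Z_{n}}\equiv_{W_{n}}0$ (the paper's $\nu_{n}$). The genuine gap is everything after that. The paper's decisive move, which your proposal omits, is to apply the ACC for numerically trivial pairs (Theorem \ref{thmglobalacc}) to the numerically trivial pairs $K_{F_{n}}+\Delta'_{X_{n}}|_{F_{n}}+(1-\nu_{n})\Delta''_{X_{n}}|_{F_{n}}\equiv0$ on the general fibres $F_{n}$ of $X_{n}\to W_{n}$, forcing $\nu_{n}=0$ for \emph{some} $n$, i.e.\ $D_{Z_{n}}$ itself trivial over $W_{n}$. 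Without that, the perturbation $c_{n}\Delta''_{X_{n}}$ never disappears, and neither of your two subcases closes the argument. In the branch where $K_{X_{n}}+\Delta_{X_{n}}-c_{n}\Delta''_{X_{n}}$ is not pseudo-effective, Theorem \ref{thmmain} over $W_{n}$ only yields a Mori fiber space of the \emph{perturbed} pair; this gives no information about $(X_{n},\Delta_{X_{n}})$, whose log canonical divisor \emph{is} pseudo-effective since $D$ is, and descending to ever smaller bases has no mechanism for converting statements about $\Delta_{X_{n}}-c_{n}\Delta''_{X_{n}}$ into statements about $\Delta_{X_{n}}$ when $c_{n}>0$. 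In the other branch you invoke Proposition \ref{propcase1} for $(X_{n},\Delta_{X_{n}})\to Z_{n}$, which requires re-establishing every condition of Lemma \ref{lemreduction} (in particular (iv)); and your transfer back to $(X,\Delta)$ rests on $p^{*}(K_{X}+\Delta)\geq q^{*}(K_{X_{n}}+\Delta_{X_{n}})$, which is not available because $\phi_{n}$ is a $(K_{X}+\Delta-a_{n}\Delta'')$-MMP rather than a $(K_{X}+\Delta)$-MMP; the paper circumvents exactly this by a limiting argument $a_{n}\to0$ on a single model isomorphic in codimension one to all the $X_{n}$, which needs the whole sequence, not one index.

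For comparison, here is what the paper does once an index with $D_{Z_{n}}$ trivial over $W_{n}$ is secured. It records the pseudo-effective threshold $\tau=\tau(Z,\Psi_{n};A)\in(0,1]$ of the ample divisor $A$ used for the scaling, and sets $\widetilde{E}=E-(\tau/\epsilon)A$ and $\widetilde{\Delta}=\Delta''-(\tau/\epsilon)\pi^{*}A$, where $\epsilon=\epsilon_{n}$. Since both $D_{Z_{n}}$ and $D_{Z_{n}}-\epsilon E_{Z_{n}}+\tau A_{Z_{n}}$ are trivial over the lower-dimensional $W_{n}$, every $D-t(\epsilon E-\tau A)$ with $0\leq t\leq1$ is pulled back from $W_{n}$ after the birational modification, hence not big, while it is pseudo-effective as a $\mathbb{Q}_{\geq0}$-combination of $D$ and $D-\epsilon E+\tau A$; moreover $(X,\Delta-t\widetilde{\Delta})$ remains lc because $\Delta-t\widetilde{\Delta}\leq\Delta+\pi^{*}A$. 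This places the original $\pi:(X,\Delta)\to Z$ exactly in the situation of Proposition \ref{propcase1} with $(\widetilde{E},\widetilde{\Delta})$ replacing $(E,\Delta'')$, and the Case \ref{case1} machinery then runs essentially verbatim. That twist of $E$ by the ample divisor, legitimized by the ACC for numerically trivial pairs, is the idea your proposal is missing; I would go back and build Steps 2--4 of the paper's argument before attempting the reduction to Case \ref{case1}.
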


\begin{proof} 
We prove it by using similar techniques used in the proof of Proposition \ref{propcase1}. 

\begin{step3}\label{step1incase2}
In this step we construct a diagram used in the proof.  

Let $\{\epsilon_{n}\}_{n\geq1}$ be a strictly decreasing infinite sequence of rational numbers such that $0<\epsilon_{n}<1$ for any $n$ and ${\rm lim}_{n \to \infty}\epsilon_{n}=0$. 
By \cite[Corollary 3.2]{fg-bundle} and the definition of $D$ and $E$, there are $\mathbb{Q}$-divisors $\Psi_{n}$ on $Z$ such that all $(Z,\Psi_{n})$ are klt and $D-\epsilon_{n}E \sim_{\mathbb{Q}}K_{Z}+\Psi_{n}$. 
Fix a sufficiently general ample $\mathbb{Q}$-divisor $A$ on $Z$ such that $D+(1/2)A$ and $(1/2)A-E$ are nef, $(X, \Delta+\pi^{*}A)$ is lc and $(Z,\Psi_{n}+A)$ is klt for any $n$. 
Then 
\begin{equation*}
\begin{split}
K_{Z}+\Psi_{n}+A&\sim_{\mathbb{Q}}D-\epsilon_{n}E+A\\
&=D+\frac{1}{2}A+\epsilon_{n}\Bigl(\frac{1}{2}A-E\Bigr)+\frac{1}{2}(1-\epsilon_{n})A
\end{split}
\end{equation*}
is nef. 
Therefore we can run the $(K_{Z}+\Psi_{n})$-MMP with scaling of $A$, and we get a Mori fiber space 
$$(Z, \Psi_{n}) \dashrightarrow \bigl(Z_{n},(\Psi_{n})_{Z_{n}}\bigr)\overset{f_{n}}{\to}W_{n}.$$   
Let $\tau_{n}=\tau(Z,\Psi_{n}\,;A)$ be the pseudo-effective threshold of $A$ with respect to $(Z,\Psi_{n})$. 
Then $0<\tau_{n}\leq1$. 
By the basic properties of the log MMP with scaling, $D_{Z_{n}}-\epsilon_{n}E_{Z_{n}}+\tau_{n}A_{Z_{n}}\sim_{\mathbb{Q}}K_{Z_{n}}+(\Psi_{n})_{Z_{n}}+\tau_{n}A_{Z_{n}}$ is nef and trivial over $W_{n}$. 
Clearly $D_{Z_{n}}-\epsilon_{n}E_{Z_{n}} \sim_{\mathbb{Q}}K_{Z_{n}}+(\Psi_{n})_{Z_{n}}$ is anti-ample over $W_{n}$ by construction. 
Since $D_{Z_{n}}$ is pseudo-effective, it is nef over $W_{n}$ and hence we see that $E_{Z_{n}}$ is ample over $W_{n}$. 
By Lemma \ref{lemliftmmp}, we obtain the following diagram 
$$
\xymatrix{
(X,\Delta-\epsilon_{n}\Delta'') \ar[d]_{\pi}\ar@{-->}[r]&(X_{n},\Delta_{X_{n}}-\epsilon_{n}\Delta''_{X_{n}})\ar[d]_{\pi_{n}}\!\!\!\!\!\!\!\!\!\!\!\!\!\!\!\\
Z\ar@{-->}[r]&Z_{n} \ar[r]^{f_{n}}&W_{n}
}
$$
such that
\begin{enumerate}
\item[(i)]
the upper horizontal birational map is a sequence of finitely many steps of the $(K_{X}+\Delta-\epsilon_{n}\Delta'')$-MMP,
\item[(ii)]
$\pi_{n}$ is projective and surjective with connected fibers, and
\item[(iii)]
$K_{X_{n}}+\Delta_{X_{n}}\sim_{\mathbb{Q}} \pi_{n}^{*}D_{Z_{n}}$ and $\Delta''_{X_{n}}\sim_{\mathbb{Q}} \pi_{n}^{*}E_{Z_{n}}$ for any $n$.
\end{enumerate} 
\end{step3}

\begin{step3}\label{step2incase2}
In this step we prove that there is an index $n$ such that $(X_{n},\Delta_{X_{n}})$ is lc and $D_{Z_{n}}$ is trivial over $W_{n}$. 
The idea is similar to the proof of \cite[Proposition 8.7]{dhp} or \cite[Lemma 3.1]{gongyo-nonvanishing}. 
By the cone theorem \cite[Theorem 4.5.2]{fujino-book}, $D_{Z_{n}}$ is trivial over $W_{n}$ if and only if $D_{Z_{n}}$ is numerically trivial over $W_{n}$. 

By the same arguments as in Step \ref{step2incase1} in the proof of Proposition \ref{propcase1}, we can find infinitely many indices $n$ such that $(X_{n}, \Delta_{X_{n}})$ is lc. 
Therefore, by replacing $\epsilon_{n}$ with its subsequence, we may assume that $(X_{n}, \Delta_{X_{n}})$ is lc for any $n$. 
Moreover we may assume that the dimension of $W_{n}$ is constant for all $n$ by replacing $\epsilon_{n}$ with its subsequence. 

Since $D_{Z_{n}}$ is nef over $W_{n}$ and since $D_{Z_{n}}-\epsilon_{n}E_{Z_{n}}$ is anti-ample over $W_{n}$, $D_{Z_{n}}-\nu_{n}E_{Z_{n}}$ is numerically trivial over $W_{n}$ for some $0\leq \nu_{n}< \epsilon_{n}$. 
Then we have 
\begin{equation*}
\begin{split}
K_{X_{n}}&+\Delta_{X_{n}}-\nu_{n} \Delta''_{X_{n}}\\
&=K_{X_{n}}+\Delta'_{X_{n}}+(1-\nu_{n}) \Delta''_{X_{n}} \sim_{\mathbb{Q}}\pi_{n}^{*}(D_{Z_{n}}-\nu_{n}E_{Z_{n}})\equiv_{W_{n}}0.
\end{split}
\end{equation*}
Let $F_{n}$ be the general fiber of $f_{n}\circ \pi_{n}$.  
Then $\bigl(F_{n}, (\Delta_{X_{n}}-\nu_{n} \Delta''_{X_{n}})|_{F_{n}}\bigr)$ is lc,  and $\Delta''_{X_{n}}|_{F_{n}}\sim_{\mathbb{Q}}(\pi_{n}^{*}E_{Z_{n}})|_{F_{n}}$ is not numerically trivial since $E_{Z_{n}}$ is ample over $W_{n}$.
Consider 
$$T=\{\nu \in \mathbb{R}_{\geq 0}\,|\, K_{F_{n}}+\Delta'_{X_{n}}|_{F_{n}}+\nu \Delta''_{X_{n}}|_{F_{n}} \equiv 0 {\rm \;\, for\;\, some\;\,}n \}.$$
Clearly $T\supset \{ 1-\nu_{n}\}_{n\geq 1}$ by the definition of $\nu_{n}$.
Conversely, $\nu=1-\nu_{n}$ is the unique number satisfying $K_{F_{n}}+\Delta'_{X_{n}}|_{F_{n}}+\nu \Delta''_{X_{n}}|_{F_{n}} \equiv0$ because $\Delta''_{X_{n}}|_{F_{n}}$ is not numerically trivial.
Therefore we have $T=\{1-\nu_{n}\}_{n\geq 1}$. 
By construction, the dimension of $F_{n}$ is constant for any $n$ and any coefficient of component of $\Delta'_{X_{n}}|_{F_{n}}$ or $\Delta''_{X_{n}}|_{F_{n}}$ is in a finite set which does not depend on $n$. 
Since ${\rm lim}_{n \to \infty}\epsilon_{n}=0$ and $0\leq \nu_{n}< \epsilon_{n}$, $T$ must contain one by the ACC for numerically trivial pairs (cf.~Theorem \ref{thmglobalacc}). 
Then $\nu_{n}=0$ for some $n$. 
In this way, we see that there is an index $n$ such that $(X_{n},\Delta_{X_{n}})$ is lc and $D_{Z_{n}}$ is trivial over $W_{n}$. 
\end{step3}

\begin{step3}\label{step3incase2}
Fix an index $n$ such that $(X_{n},\Delta_{X_{n}})$ is lc and $D_{Z_{n}}$ is trivial over $W_{n}$.
Recall that $K_{Z}+\Psi_{n}\sim_{\mathbb{Q}}D-\epsilon_{n}E$ and $\tau_{n}$ is the pseudo-effective threshold of $A$ with respect to $(Z,\Psi_{n})$.
In this step we prove that $D-t(\epsilon_{n} E-\tau_{n} A)$ is not big for any rational number $0\leq t\leq 1$. 
Note that $D-t(\epsilon_{n} E-\tau_{n} A)$ is pseudo-effective for any $0\leq t\leq 1$ because it is represented by a $\mathbb{Q}_{\geq 0}$-linear combination of $D$ and $D-\epsilon_{n}E+\tau_{n}A$.

Suppose by contradiction that $D-t(\epsilon_{n} E-\tau_{n} A)$ is big for some $0\leq t\leq 1$. 
Then $D_{Z_{n}}-t(\epsilon_{n} E_{Z_{n}}-\tau_{n} A_{Z_{n}})$ is also big. 
On the other hand, $D_{Z_{n}}$ is trivial over $W_{n}$. 
Moreover $D_{Z_{n}}-\epsilon_{n} E_{Z_{n}}+\tau_{n} A_{Z_{n}}$ is also trivial over $W_{n}$ as we mentioned in Step \ref{step1incase2}. 
Therefore $D_{Z_{n}}-t(\epsilon_{n} E_{Z_{n}}+\tau_{n} A_{Z_{n}})$ is trivial over $W_{n}$ and thus it is $\mathbb{Q}$-linearly equivalent to the pullback of a $\mathbb{Q}$-divisor on $W_{n}$. 
Because ${\rm dim}\,W_{n}<{\rm dim}\,Z_{n}$, we get a contradiction. 
Therefore $D-t(\epsilon_{n} E-\tau_{n} A)$ is not big for any rational number $0\leq t\leq 1$.

For this $n$, we put $\epsilon=\epsilon_{n}$ and $\tau=\tau_{n}$ in the rest of the proof. 
Since $K_{X}+\Delta-t(\epsilon\Delta''-\tau \pi^{*}A) \sim_{\mathbb{Q}}\pi^{*}\bigl(D-t(\epsilon E-\tau A)\bigr)$, $K_{X}+\Delta-t(\epsilon\Delta''-\tau \pi^{*}A)$ is also pseudo-effective for any $0\leq t\leq 1$.
\end{step3}

\begin{step3}\label{step4incase2}
We set 
$$\widetilde{E}=E-\frac{\tau}{\epsilon} A \qquad {\rm and} \qquad \widetilde{\Delta}=\Delta''-\frac{\tau}{\epsilon} \pi^{*}A\sim_{\mathbb{Q}}\pi^{*}\widetilde{E}.$$
Note that $\widetilde{E}$ and $\widetilde{\Delta}$ may not be effective. 
We see that $D-t\widetilde{E}$ is pseudo-effective for any $0\leq t \leq \epsilon$ because $D-t\widetilde{E}=D-(t/\epsilon)(\epsilon E-\tau A)$. 
Since $K_{X}+\Delta-t\widetilde{\Delta} \sim_{\mathbb{Q}}\pi^{*}(D-t\widetilde{E})$, $K_{X}+\Delta-t\widetilde{\Delta}$ is also pseudo-effective for any $0\leq t \leq \epsilon$.
Moreover, for any $0\leq t\leq \epsilon$, $(X,\Delta-t\widetilde{\Delta})$ is lc. 
To see this, recall that $0< \tau \leq 1$, which is mentioned in Step \ref{step1incase2}. 
So we have $0 \leq t\tau/\epsilon\leq 1$ for any $0\leq t\leq \epsilon$. 
We also recall that $(X,\Delta+\pi^{*}A)$ is lc.
Since $\Delta-t\widetilde{\Delta}=\Delta-t\Delta''+(t\tau/\epsilon)\pi^{*}A$, the pair $(X,\Delta-t\widetilde{\Delta})$ is indeed lc for any $0\leq t\leq \epsilon$.
\end{step3}

\begin{step3}\label{step5incase2}
From this step we use the same arguments as in the proof of Proposition \ref{propcase1}. 
We only write down the outline of the proof. 

Fix a strictly decreasing infinite sequence of rational numbers $\{a_{n}\}_{n\geq1}$ such that $0<a_{n}<\epsilon$ for any $n$ and ${\rm lim}_{n \to \infty}a_{n}=0$. 
With the divisors $D$ and $\widetilde{E}$, we carry out the same arguments as in Step \ref{step1incase1} in the proof of Proposition \ref{propcase1}. 
When we apply the arguments of Step \ref{step2incase1} in the proof of Proposition \ref{propcase1}, a minor change is needed. 
More precisely, we need to carry out the arguments with the effective part of $\widetilde{\Delta}$. 
But we can eventually obtain the same result, that is, $(X_{n},\Delta_{X_{n}})$ is lc for infinitely many indices $n$.  
By the same arguments as in Step \ref{step3incase1} in the proof of Proposition \ref{propcase1}, replacing $\{a_{n}\}_{n\geq1}$ with its subsequence, we get a strictly decreasing infinite sequence $\{a_{n}\}_{n\geq1}$ of positive rational numbers such that 
\begin{enumerate}
\item[(i)]
$a_{n}<\epsilon$ for any $n$ and ${\rm lim}_{n\to \infty}a_{n}=0$, and 
\item[(ii)]
for any $n\geq1$, there is a diagram
$$
\xymatrix@C=25pt{
\bigl(X,\Delta-a_{n}\widetilde{\Delta}\bigr) \ar[d]_{\pi}\ar@{-->}[r]^{\!\!\!\!\!\!\!\!\!\!\!\!\phi_{n}}&\bigl(X_{n},\Delta_{X_{n}}-a_{n}\widetilde{\Delta}_{X_{n}}\bigr)\ar[d]_{\pi_{n}}\\
Z\ar@{-->}[r]&Z_{n}
}
$$
such that 
\begin{enumerate}
\item[(ii-a)]
$X_{n}$ and $Z_{n}$ are $\mathbb{Q}$-factorial, $(X_{n},\Delta_{X_{n}})$ is lc, $(X_{n},0)$ is klt and $\pi_{n}$ is a projective surjective morphism with connected fibers,
\item[(ii-b)]
$K_{X_{n}}+\Delta_{X_{n}} \sim_{\mathbb{Q}}\pi_{n}^{*}D_{Z_{n}}$ and $\widetilde{\Delta}_{X_{n}}\sim_{\mathbb{Q}}\pi_{n}^{*}\widetilde{E}_{Z_{n}}$, and 
\item[(ii-c)]
$\phi_{n}$ is a sequence of finitely many steps of the $(K_{X}+\Delta-a_{n}\widetilde{\Delta})$-MMP to a good minimal model $(X_{n},\Delta_{X_{n}}-a_{n}\widetilde{\Delta}_{X_{n}})$.
\end{enumerate} 
\end{enumerate} 
Now we carry out the argument as in the latter part of Step \ref{step3incase1} in the proof of Proposition \ref{propcase1} with $(X,\Delta)$ and $\widetilde{\Delta}$ instead of $(X,\Delta)$ and $\Delta''$. 
Then we see that we can assume
\begin{enumerate}
\item[(ii-d)]
$X_{i}$ and $X_{j}$ are isomorphic in codimension one for any $i$ and $j$.
\end{enumerate}
Moreover, as in the last part of Step \ref{step3incase1} in the proof of Proposition \ref{propcase1}, we see that $D_{Z_{n}}-(a_{n}-\delta)\widetilde{E}_{Z_{n}}$ is not big for any sufficiently small rational number $\delta >0$. 
\end{step3}

\begin{step3}\label{step6incase2}
Finally we complete the proof by using the same arguments as in Step \ref{step4incase1} and Step \ref{step5incase1} in the proof of Proposition \ref{propcase1}. 
To carry out, we only check that every lc center of $(X_{1},\Delta_{X_{1}}-t\widetilde{\Delta}_{X_{1}})$ dominates $Z_{1}$ for any $0<t \leq a_{1}$. 
Once we can check this, we see that $(X_{1},\Delta_{X_{1}})$ has a good minimal model by the same arguments as in Step \ref{step5incase1} in the proof of Proposition \ref{propcase1}, and thus $(X,\Delta)$ has a weak lc model with semi-ample log canonical divisor by the same arguments as in Step \ref{step4incase1} in the proof of Proposition \ref{propcase1}. 

For any  $0<t \leq a_{1}$, every lc center of $(X_{1},\Delta_{X_{1}}-t\widetilde{\Delta}_{X_{1}})$ is also an lc center of $(X_{1},\Delta_{X_{1}}-a_{1}\widetilde{\Delta}_{X_{1}})$ because $(X_{1},\Delta_{X_{1}})$ is lc. 
Therefore we may check the condition only when $t=a_{1}$. 

Recall again that $\tau$ is a rational number such that $0< \tau \leq 1$. 
Since $a_{1}<\epsilon$, we have $a_{1}\tau/\epsilon<1$. 
Since $(X,\Delta+\pi^{*}A)$ is lc and
$$\Delta-a_{1}\widetilde{\Delta}=\Delta-a_{1}\Delta''+(a_{1}\tau/\epsilon) \pi^{*}A\leq\Delta+\pi^{*}A,$$
every lc center of $(X,\Delta-a_{1}\widetilde{\Delta})$ is an lc center of $(X,\Delta+\pi^{*}A)$, and moreover it is also an lc center of $(X,\Delta-\Delta'')$. 
Since any lc center of $(X,\Delta-\Delta'')$ dominates $Z$ by the condition (iii) of Lemma \ref{lemlogresol}, we see that any lc center of $(X,\Delta-a_{1}\widetilde{\Delta})$ dominates $Z$. 
Since $\phi$ is a sequence of finitely many steps of the $(K_{X}+\Delta-a_{1}\widetilde{\Delta})$-MMP, any lc center of $(X_{1},\Delta_{X_{1}}-a_{1}\widetilde{\Delta}_{X_{1}})$ dominates $Z_{1}$. 
Thus we complete the proof.
\end{step3}
\end{proof}

Finally we prove Theorem \ref{thmmain} in Case \ref{case3}. 
As we state in Proposition \ref{propcase3} below, we can in fact prove the case with assumptions weaker than Proposition \ref{propcase1} or Proposition \ref{propcase2}, i.e., we can prove the case without assuming the existence of a good minimal model or a Mori fiber space for all $d_{0}$-dimensional projective Kawamata log terminal pairs with boundary $\mathbb{Q}$-divisors. 

\begin{prop}\label{propcase3}
Fix a positive integer $d_{0}$. 
Assume Theorem \ref{thmmain} for $d_{0}-1$, and assume the existence of a good minimal model or a Mori fiber space for all $d$-dimensional projective Kawamata log terminal pairs  with boundary $\mathbb{Q}$-divisors such that $d\leq d_{0}-1$. 

Let $\pi:(X,\Delta) \to Z$ be as in Theorem \ref{thmmain} satisfying all conditions of Lemma \ref{lemreduction}.
Let $D$ be as in the condition (iii) of Lemma \ref{lemreduction}. 

If $D$ is big, then $(X,\Delta)$ has a good minimal model. 
\end{prop}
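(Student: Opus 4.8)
The plan is to reduce, by the same mechanism as in the other two cases, to a pair carrying an extra semi-ampleness property, the point being that bigness of $D$ forces the relevant divisors on the base to be big, so that \cite{bchm} can replace the hypothesis of Theorem \ref{thmmain} in dimension $d_{0}$.

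First, since $D$ is big, $D-eE$ is big --- hence pseudo-effective --- for every sufficiently small rational $e>0$, so $\pi:(X,\Delta)\to Z$ is in the situation of Proposition \ref{propcase1} as far as pseudo-effectivity of $D-eE$ is concerned. By Remark \ref{remproofcase1} I would run verbatim the arguments of Step \ref{step1incase1}--Step \ref{step4incase1} and the first half of Step \ref{step5incase1} from the proof of Proposition \ref{propcase1}, with three bookkeeping changes forced by the weaker hypotheses of Proposition \ref{propcase3}: (1) in Step \ref{step1incase1} and Step \ref{step5incase1}, where one passes from a divisor of the form $D-a_{n}E$ or $D_{Z_{1}}-(a_{1}-u)E_{Z_{1}}$ to a good minimal model of the klt pair on $Z$ (resp.\ $Z_{1}$) supplied by \cite[Corollary 3.2]{fg-bundle}, note that these divisors are big, hence so are the corresponding log canonical divisors, and the required good minimal models exist by \cite[Corollary 1.4.2]{bchm}; (2) likewise the termination invoked via \cite[Theorem 4.1 (iii)]{birkar-flip} is applied only to pairs now known to possess good minimal models; (3) wherever Step \ref{step4incase1} invokes Proposition \ref{propkltcase} for a pair $(X_{1},\Delta_{X_{1}}-t\Delta''_{X_{1}})\to Z_{1}$, use Proposition \ref{propbigkltcase} instead, since $D_{Z_{1}}-tE_{Z_{1}}$ is big. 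This reduces the statement to: the pair $\pi'_{1}:(X'_{1},\Delta_{X'_{1}})\to Z'_{1}$ produced by the first half of Step \ref{step5incase1} has a good minimal model; here $X'_{1},Z'_{1}$ are $\mathbb{Q}$-factorial, $(X'_{1},0)$ is klt, $\pi'_{1}$ is projective surjective with connected fibers, $K_{X'_{1}}+\Delta_{X'_{1}}\sim_{\mathbb{Q}}\pi_{1}'^{*}D_{Z'_{1}}$ with $D_{Z'_{1}}$ big, $\Delta''_{X'_{1}}\sim_{\mathbb{Q}}\pi_{1}'^{*}E_{Z'_{1}}$, and --- crucially --- both $K_{X'_{1}}+\Delta_{X'_{1}}-a_{1}\Delta''_{X'_{1}}$ and $K_{X'_{1}}+\Delta_{X'_{1}}-(a_{1}-u)\Delta''_{X'_{1}}$ are semi-ample. (One may assume $E_{Z'_{1}}\neq0$, as otherwise $\Delta''_{X'_{1}}\sim_{\mathbb{Q}}0$ is effective, hence $0$, and $K_{X'_{1}}+\Delta_{X'_{1}}=K_{X'_{1}}+\Delta_{X'_{1}}-a_{1}\Delta''_{X'_{1}}$ is already semi-ample.) Once $(X'_{1},\Delta_{X'_{1}})$ is shown to have a good minimal model, so does $(X_{1},\Delta_{X_{1}})$, the map $X_{1}\dashrightarrow X'_{1}$ being a sequence of steps of the $(K_{X_{1}}+\Delta_{X_{1}})$-MMP, and then $(X,\Delta)$ has a weak lc model with semi-ample log canonical divisor by Step \ref{step4incase1}, hence a good minimal model by \cite[Corollary 3.7]{birkar-flip}.

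For the remaining assertion I would use the semi-ample divisor $K_{X'_{1}}+\Delta_{X'_{1}}-a_{1}\Delta''_{X'_{1}}\sim_{\mathbb{Q}}\pi_{1}'^{*}(D_{Z'_{1}}-a_{1}E_{Z'_{1}})$. As $D_{Z'_{1}}-a_{1}E_{Z'_{1}}$ is big and semi-ample it induces a birational contraction $Z'_{1}\to V$, hence a contraction $g:X'_{1}\to V$ with $\dim V=\dim Z'_{1}$, over which $K_{X'_{1}}+\Delta_{X'_{1}}-a_{1}\Delta''_{X'_{1}}$ is the pullback of an ample divisor, $K_{X'_{1}}+\Delta_{X'_{1}}\sim_{\mathbb{Q},\,V}a_{1}\Delta''_{X'_{1}}\geq0$, and $K_{X'_{1}}+\Delta_{X'_{1}}$ is $\mathbb{Q}$-linearly trivial on the general fiber of $g$. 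I would then run the $(K_{X'_{1}}+\Delta_{X'_{1}})$-MMP over $V$ with scaling of an ample divisor: since the log canonical divisor is relatively $\mathbb{Q}$-linearly equivalent to an effective divisor and is relatively trivial on general fibers, this MMP should terminate with a good minimal model over $V$; because the whole process is over $V$ the birational transform of $K_{X'_{1}}+\Delta_{X'_{1}}-a_{1}\Delta''_{X'_{1}}$ remains the pullback of the same ample divisor, and combining this with semi-ampleness over $V$ of the output of the MMP one obtains a globally semi-ample log canonical divisor, i.e.\ a good minimal model of $(X'_{1},\Delta_{X'_{1}})$. An alternative finish is to apply Proposition \ref{propbigkltcase} to the perturbed pairs $(X'_{1},\Delta_{X'_{1}}-t\Delta''_{X'_{1}})$ for all rational $t\in(0,a_{1}]$ --- each has a big base divisor and, after transferring the conclusion of Lemma \ref{lemlogresol} from $(X_{1},\Delta_{X_{1}})$ along $X_{1}\dashrightarrow X'_{1}$ by a common-resolution argument, every lc center of each of which dominates $Z'_{1}$ --- and to pass to the limit $t\to0$ using that $K_{X'_{1}}+\Delta_{X'_{1}}-a_{1}\Delta''_{X'_{1}}$ is already semi-ample, exactly as in Step \ref{step4incase1}.

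The main obstacle is this last step. In Case \ref{case1} and Case \ref{case2} one arranges $\dim W<\dim Z'_{1}$ and appeals to Theorem \ref{thmmain} for $d_{0}-1$; here, bigness of $D$ makes the analogous contraction satisfy $\dim V=\dim Z'_{1}$, so the induction on $d_{0}$ is unavailable and relative abundance over $V$ must be extracted directly. The effective divisor $a_{1}\Delta''_{X'_{1}}$ becomes, after the relative MMP, a limit of movable divisors over $V$ whose vertical part should be forced to vanish by \cite[Lemma 3.3]{birkar-flip}, exactly as in the proof of Proposition \ref{propkltcase}; but making this rigorous --- together with the termination of the relative MMP and the transfer of the ``every lc center dominates'' property to $(X'_{1},\Delta_{X'_{1}})$ --- is the technical heart of the proof.
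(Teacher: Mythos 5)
Your reduction is the same as the paper's: Step 1 of the paper's proof likewise invokes Remark \ref{remproofcase1} and Proposition \ref{propbigkltcase} to reduce to the situation where $D-aE$ and $D-(a-u)E$ are semi-ample, and you have correctly diagnosed why the induction on $d_{0}$ breaks down here (the contraction induced by $D_{Z'_{1}}-a_{1}E_{Z'_{1}}$ is birational, so one cannot drop the dimension of the base). But the part you label ``the technical heart'' is exactly the part the proof must supply, and neither of your two proposed finishes closes it. The relative-MMP-over-$V$ route needs (a) termination of a $(K_{X'_{1}}+\Delta_{X'_{1}})$-MMP over $V$ for an lc (not klt) pair, which is not available without already knowing a minimal model exists, and (b) very exceptionality of $\Delta''_{X'_{1}}$ over $V$ for the appeal to \cite[Lemma 3.3]{birkar-flip}; since $\Delta''_{X'_{1}}\sim_{\mathbb{Q}}\pi_{1}'^{*}E_{Z'_{1}}$ and $E_{Z'_{1}}$ need not be contracted by the birational map $Z'_{1}\to V$, this fails in general. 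The ``alternative finish'' is circular: the limit argument of Step \ref{step4incase1} produces a weak lc model of $(X,\Delta)$ only because $K_{X''}+\Delta_{X''}$ is already known to be semi-ample, and that semi-ampleness was derived from the hypothesis that $(X_{1},\Delta_{X_{1}})$ has a good minimal model --- the very thing being proved. Knowing that each $(X'_{1},\Delta_{X'_{1}}-t\Delta''_{X'_{1}})$ has a good minimal model for $t>0$ does not by itself let you pass to $t=0$.

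The paper resolves this with two ideas absent from your proposal. First, it chooses the scaling divisor $A\sim_{\mathbb{Q}}m(D-aE)$ proportional to the big semi-ample divisor, so that on a dlt model $(Y,\Gamma)$ every curve contracted in the $(K_{Y}+\Gamma)$-MMP with scaling of $A'=g_{0}^{*}A$ satisfies $a(C\cdot G_{Y_{j}})=C\cdot(K_{Y_{j}}+\Gamma_{Y_{j}})-\tfrac{1}{m}(C\cdot A'_{Y_{j}})<0$, hence lies in ${\rm Supp}\,G$ with $G=f^{*}\Delta''$. Since $g_{0}({\rm Supp}\,G)\subsetneq Z$, every lc center $S\subset{\rm Supp}\,G$ maps to a proper subvariety $Z_{T}$ of $Z$ with $K_{T}+\Theta\sim_{\mathbb{Q},\,Z_{T}}0$ on a dlt model, so Theorem \ref{thmmain} for $d_{0}-1$ \emph{does} apply there; special termination then forces the whole MMP to terminate. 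Second, the resulting nef divisor $K_{X_{k_{i}}}+\Delta_{X_{k_{i}}}\sim_{\mathbb{Q}}\pi_{i}^{*}D_{Z_{i}}$ with $D_{Z_{i}}$ big is shown to be semi-ample by Lemma \ref{lemabundance}: it is nef and log abundant (restrictions to lc centers are handled either by Theorem \ref{thmmain} for $d_{0}-1$ or by bigness of the base divisor), and one concludes by \cite[Theorem 4.12]{fujino-gongyo}. Without these two steps --- special termination driven by the choice of $A$, and the nef-and-log-abundant criterion --- the proof is incomplete.
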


\begin{proof}
Let $E$ be as in the condition (iii) of Lemma \ref{lemreduction}, that is, an effective $\mathbb{Q}$-divisor such that $\Delta'' \sim_{\mathbb{Q}}\pi^{*}E$. 
We may assume that $E\neq0$ because otherwise the proposition follows from Proposition \ref{propbigkltcase}. 
Fix a sufficiently small positive rational number $e < 1$ such that $D-eE$ is big. 
We prove the proposition with several steps.

\begin{step}\label{step1incase3}
First we note that the arguments of the proof of Proposition \ref{propcase1} work with some minor changes by using Proposition \ref{propbigkltcase} (cf.~Remark \ref{remproofcase1}). 
Therefore we only have to prove that there is a good minimal model of $(X,\Delta)$ under the assumption that $D-aE$ and $D-(a-u)E$ are semi-ample, where $0<a<e$ and $0<u\ll a$ are rational numbers. 
Note that $D-aE$ is big since $a<e$.

Pick a sufficiently large and divisible positive integer $m$ such that $a/(m+1)<u$ and $1/m<u$. 
Fix $A\sim_{\mathbb{Q}}m(D-aE)$ a general semi-ample $\mathbb{Q}$-divisor. 
Then $A$ is big and we have
$$A+E \sim_{\mathbb{Q}}m(D-aE)+E=m\Bigl(D-\Bigl(a-\frac{1}{m}\Bigr)E\Bigr)$$
and 
$$D+A\sim_{\mathbb{Q}}D+m(D-aE)=(m+1)\Bigl(D-\Bigl(a-\frac{a}{m+1}\Bigr)E\Bigr).$$
Since $0<a/(m+1)<u$, $0<1/m<u$ and $D-aE$ and $D-(a-u)E$ are semi-ample, we see that $A+E$ and $D+A$ are semi-ample. 
By Lemma \ref{lemd-mmp}, there is a sequence of birational  maps of the $D$-MMP with scaling of $A$ 
$$(Z=Z_{0},\lambda_{0}) \dashrightarrow \cdots \dashrightarrow (Z_{i},\lambda_{i})\dashrightarrow \cdots$$
such that ${\rm lim}_{i \to \infty} \lambda_{i}=0$. 

First we prove existence of a log minimal model of $(X,\Delta)$. 
Take a dlt blow-up $f:(Y,\Gamma) \to (X,\Delta)$.
Then we only have to prove that $(Y,\Gamma)$ has a log minimal model.
Set $g_{0}=\pi\circ f:Y\to Z$ and $A'=g_{0}^{*}A$. 
By construction we have $K_{Y}+\Gamma\sim_{\mathbb{Q}}g_{0}^{*}D$. 
Since $A$ is a general semi-ample divisor on $Z$, we may assume that $(Y,\Gamma+A')$ is also dlt.
Set $G=f^{*}\Delta''$. 
By Lemma \ref{lemliftmmp}, we have the following diagram
$$
\xymatrix@C=12pt{
(Y=Y_{0},\Gamma=\Gamma_{0}) \ar_{g_{0}}[d] \ar@{-->}[r]&
\cdots \ar@{-->}[r]&(Y_{k_{1}},\Gamma_{Y_{k_{1}}})\ar_{g_{1}}[d] \ar@{-->}[r]&
\cdots\ar@{-->}[r]& (Y_{k_{i}},\Gamma_{Y_{k_{i}}}) \ar_{g_{i}}[d] \ar@{-->}[r]&
\cdots \\
(Z_{0},\lambda_{0})\ar@{-->}[rr]&&(Z_{1},\lambda_{1})\ar@{-->}[rr]&&(Z_{i},\lambda_{i})\ar@{-->}[r]&
\cdots
}
$$
such that 
\begin{enumerate}
\item[(i)]
the upper horizontal sequence of birational maps is a sequence of the $(K_{Y}+\Gamma)$-MMP with scaling of $A'$,
\item[(ii)]
if we set $k_{0}=0$ and 
$$\lambda'_{j}={\rm inf}\{\mu \in \mathbb{R}_{\geq0} \,|\, K_{Y_{j}}+\Gamma_{Y_{j}}+\mu A'_{Y_{j}}{\rm \;is \;nef}\},$$
then $\lambda'_{j}=\lambda_{i}$ for any $i \geq 0$ and $k_{i}\leq j<k_{i+1}$, and
\item[(iii)] 
$K_{Y_{k_{i}}}+\Gamma_{Y_{k_{i}}}\sim_{\mathbb{Q}}g_{i}^{*}D_{Z_{i}}$ and $G_{Y_{k_{i}}}\sim_{\mathbb{Q}}g_{i}^{*}E_{Z_{i}}$.
\end{enumerate}
\end{step}

\begin{step}\label{step2incase3}
In this step and the next step, we prove that the $(K_{Y}+\Gamma)$-MMP with scaling of $A'$ terminates. 

Let $C$ be any curve on $Y_{j}$ contracted by the extremal contraction associated to $Y_{j}\dashrightarrow Y_{j+1}$. 
In this step we prove that $C\subset {\rm Supp}\,G_{Y_{j}}$. 
If we can check this, we may prove that the above $(K_{Y}+\Gamma)$-MMP occurs eventually disjoint from ${\rm Supp}\,G$.

By the definition of the log MMP with scaling, $C\cdot (K_{Y_{j}}+\Gamma_{Y_{j}})<0$ and $C\,\cdot\, (K_{Y_{j}}+\Gamma_{Y_{j}}+\lambda'_{j}A'_{Y_{j}})=0$. 
Therefore $(C\cdot A'_{Y_{j}})>0$. 
We also have $A'\sim_{\mathbb{Q}}m(K_{Y}+\Gamma-aG)$ by the definition of $A$. 
Then 
$$a(C\,\cdot\, G_{Y_{j}})=C\cdot (K_{Y_{j}}+\Gamma_{Y_{j}})-\frac{1}{m} (C\cdot A'_{Y_{j}})<0.$$
Since $a>0$ and $G_{Y_{j}}$ is effective, we see that $C\subset {\rm Supp}\,G_{Y_{j}}$.
\end{step}

\begin{step}\label{step3incase3}
We apply the standard arguments of the special termination (cf.~\cite{fujino-sp-ter}). 
Note that ${\rm Supp}\,G\subset {\rm Supp}\,\llcorner \Gamma \lrcorner$. 
By replacing $(Y,\Gamma)$ with $(Y_{k_{i}},\Gamma_{Y_{k_{i}}})$ for some $i\gg0$, we may assume that the $(K_{Y}+\Gamma)$-MMP contains only flips and flipping locus on each flip contains no lc centers. 

Let $S\subset {\rm Supp}\,G$ be an lc center of $(Y,\Gamma)$ and let $S_{j}$ be the birational transform of $S$ on $Y_{j}$. 
We define $\mathbb{Q}$-divisors  $\Gamma_{S_{j}}$ by the adjunction $(K_{Y_{j}}+\Gamma_{Y_{j}})|_{S_{j}}=K_{S_{j}}+\Gamma_{S_{j}}$. 
Then $(S_{j},\Gamma_{S_{j}})$ is dlt. 
By induction on the dimension of $S$ we show that  for any $j\gg0$ the induced birational map $\phi_{j}:(S_{j},\Gamma_{S_{j}})\dashrightarrow (S_{j+1},\Gamma_{S_{j+1}})$ is an isomorphism. 
By the argument as in \cite{fujino-sp-ter}, for any $j\gg 0$, $S_{j}$ and $S_{j+1}$ are isomorphic in codimension one and $\phi_{j*}(K_{S_{j}}+\Gamma_{S_{j}})=K_{S_{j+1}}+\Gamma_{S_{j+1}}$. 
By replacing $(Y,\Gamma)$ with $(Y_{k_{i}},\Gamma_{k_{i}})$ for some $i\gg0$, we may assume that $S_{j}$ satisfies the above properties for any $j\geq0$. 
Let $(T,\Theta)\to (S,\Gamma_{S})$ be a dlt blow-up and $A''$ be the pullback of $A'$. 
By replacing $A''$ if necessary, we may assume that $A''$ is effective and $(T,\Theta+A'')$ is dlt.
Set $T_{0}^{0}=T$ and $\Theta_{T_{0}^{0}}=\Theta$. 
By the same arguments as in the proof of Lemma \ref{lemliftmmp} (see also \cite{fujino-sp-ter}), we get the following diagram 
$$
\xymatrix@C=12pt{
(T_{0}^{0},\Theta_{T_{0}^{0}}) \ar[d] \ar@{-->}[r]&
\cdots \ar@{-->}[r]&(T^{j}_{i},\Theta_{T^{j}_{i}}) \ar@{-->}[r]& \cdots\ar@{-->}[r]& (T^{l_{i}}_{i}=T_{i+1}^{0},\Theta_{T_{i+1}^{0}}) \ar[d] \ar@{-->}[r]&\cdots \\
(Y,\Gamma,\lambda'_{0})\ar@{-->}[rr]&&\cdots \ar@{-->}[rr]&&(Y_{i+1},\Gamma_{Y_{i+1}},\lambda'_{i+1})\ar@{-->}[r]&\cdots
}
$$
such that 
\begin{enumerate}
\item[(i)]
the upper horizontal sequence of birational maps is a sequence of steps of the $(K_{T}+\Theta)$-MMP with scaling $A''$, and
\item[(ii)] 
the morphism $T_{i}^{0}\to Y_{i}$ is the composition of a dlt blow-up of $(S_{i},\Gamma_{S_{i}})$ and the inclusion $S_{i}\hookrightarrow Y_{i}$. 
\end{enumerate}
By construction, we also have the following property.
\begin{enumerate} 
\item[(iii)]
If we set 
$$\lambda^{j}_{i}={\rm inf}\{\mu \in \mathbb{R}_{\geq0} \,|\, K_{T_{i}^{j}}+\Theta_{T^{j}_{i}}+\mu A''_{T_{i}^{j}}{\rm \;is \;nef},\;0\leq j<l_{i}\},$$
then $\lambda^{0}_{i} \leq \lambda'_{i}$.
\end{enumerate} 
Note that we may have $\lambda^{0}_{i} < \lambda'_{i}$ because the morphism $T_{i}^{0}\to Y_{i}$ is not surjective. 
If $\lambda^{0}_{i} < \lambda'_{i}$, then we have $T_{i}^{0} \simeq T_{i}^{1} \simeq \cdots \simeq T_{i}^{l_{i}}=T_{i+1}^{0}$ by construction. 

By (iii) of the above properties, $K_{T_{i}^{0}}+\Theta_{T_{i}^{0}}+\lambda_{i}^{0}A''_{T_{i}^{0}}$ is pseudo-effective. 
Then $K_{T}+\Theta+\lambda_{i}^{0}A''$ is also pseudo-effective. 
Since ${\rm lim}_{i \to \infty} \lambda'_{i}=0$, we see that $K_{T}+\Theta$ is pseudo-effective. 
Now consider the composition of morphisms $T \to S \hookrightarrow Y \to Z,$ which we denote $h:T \to Z$. 
Recall that $S\subset {\rm Supp}\,G$ and that $g_{0}({\rm Supp}\,G)\subsetneq Z$.  
Let $Z_{T}$ be the normalization of  $h(T)$. 
Then $K_{T}+\Theta \sim_{\mathbb{Q},\,Z_{T}}0$ because $K_{Y}+\Gamma_{Y}\sim_{\mathbb{Q},\,Z}0$. 
Moreover ${\rm dim}\,Z_{T}<{\rm dim}\,Z$ by construction.  
Since we assume Theorem \ref{thmmain} for $d_{0}-1$, applying the hypothesis to $(T,\Theta)\to Z_{T}$, we see that $(T,\Theta)$ has a good minimal model. 
Then the above $(K_{T}+\Theta)$-MMP with scaling 
terminates by \cite[Theorem 4.1 (iii)]{birkar-flip} because ${\rm lim}_{i \to \infty} \lambda'_{i}=0$. 
Therefore the induced birational map $\phi_{j}:(S_{j},\Gamma_{S_{j}})\dashrightarrow (S_{j+1},\Gamma_{S_{j+1}})$ is an isomorphism for any $j\gg0$. 
Then, by the argument of the special termination (cf.~\cite{fujino-sp-ter}), we see that the $(K_{Y}+\Gamma)$-MMP with scaling occurs eventually disjoint from ${\rm Supp}\,G$.
In this way we see that the $(K_{Y}+\Gamma)$-MMP with scaling of $A'$ constructed in Step \ref{step1incase3} must terminate.
\end{step}

\begin{step}\label{step4incase3}
Finally we prove that $(X,\Delta)$ has a good minimal model. 
By Step \ref{step3incase3}, the $D$-MMP with scaling of $A$ constructed in Step \ref{step1incase3} terminates (cf.~Lemma \ref{lemliftmmp}). 
By Lemma \ref{lemliftmmp}, we have the following diagram 
$$
\xymatrix@C=21pt{
(X,\Delta) \ar_{\pi}[d] \ar@{-->}[r]&\cdots\ar@{-->}[r]& (X_{k_{i}},\Delta_{X_{k_{i}}}) \ar_{\pi_{i}}[d] \\
(Z, \lambda_{0}) \ar@{-->}[r]&\cdots\ar@{-->}[r]&(Z_{i}, \lambda_{i}=0)
}
$$
such that $(X_{k_{i}},\Delta_{X_{k_{i}}})$ is a log minimal model of $(X,\Delta)$. 
We note that $K_{X_{k_{i}}}+\Delta_{X_{k_{i}}}\sim_{\mathbb{Q}}\pi_{i}^{*}D_{Z_{i}}$ and $D_{Z_{i}}$ is big. 
Then $K_{X_{k_{i}}}+\Delta_{X_{k_{i}}}$ is semi-ample by Lemma \ref{lemabundance} below. 
So we are done.  
\end{step}
\end{proof}

\begin{lemm}\label{lemabundance}
Fix a positive integer $d_{0}$. 
Assume Theorem \ref{thmmain} for $d_{0}-1$, and assume the existence of a good minimal model or a Mori fiber space for all $d$-dimensional projective Kawamata log terminal pairs with boundary $\mathbb{Q}$-divisors such that $d\leq d_{0}-1$. 

Let $\pi:X \to Z$ be a projective surjective morphism of a normal projective varieties such that ${\rm dim}\,Z\leq d_{0}$, and $(X,\Delta)$ be a log canonical pair such that $\Delta$ is a $\mathbb{Q}$-divisor. 
Suppose that $K_{X}+\Delta$ is nef and $K_{X}+\Delta\sim_{\mathbb{Q}}\pi^{*}D$ for a big $\mathbb{Q}$-Cartier $\mathbb{Q}$-divisor $D$ on $Z$. 

Then $K_{X}+\Delta$ is semi-ample. 
\end{lemm}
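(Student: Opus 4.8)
The plan is to reduce the semi-ampleness of $K_X+\Delta$ to an abundance statement on $Z$, using the big divisor $D$ and the assumed minimal model theory in dimension $\le d_0-1$ together with Theorem \ref{thmmain} for $d_0-1$. First I would take a dlt blow-up $f:(Y,\Gamma)\to(X,\Delta)$ via Corollary \ref{cordltblowup}, so that $(Y,\Gamma=\Gamma'+\Gamma'')$ satisfies all conditions of Lemma \ref{lemlogresol} with respect to $\pi\circ f$; since $K_Y+\Gamma=f^*(K_X+\Delta)$, it suffices to prove $K_Y+\Gamma$ is semi-ample, and by the reduction arguments already used in Lemma \ref{lemreduction} (Stein factorization, running the $(K_Y+\Gamma')$-MMP over $Z$ to make $\Gamma''$ relatively trivial, and replacing $Z$ by a $\mathbb{Q}$-factorial model) I may assume $\pi$ has connected fibers, $Z$ is $\mathbb{Q}$-factorial, $(X,0)$ is klt, and $K_X+\Delta\sim_{\mathbb{Q}}\pi^*D$ with $D$ big, $\Delta''\sim_{\mathbb{Q}}\pi^*E$ for an effective $E$, while retaining that every lc center of $(X,\Delta')$ dominates $Z$.

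Next, since $D$ is big, I would write $D\sim_{\mathbb{Q}}A_0+B_0$ with $A_0$ ample and $B_0\ge0$, and apply Kodaira-type perturbation: for a small rational $e>0$ the divisor $D-eE$ is still big (if $E\ne0$; if $E=0$ we are in the klt-type situation and Proposition \ref{propbigkltcase} applies directly). Pulling back, $K_X+\Delta-e\Delta''\sim_{\mathbb{Q}}\pi^*(D-eE)$ and every lc center of $(X,\Delta-e\Delta'')$ dominates $Z$ by condition (iii) of Lemma \ref{lemlogresol}. Then $D-eE\sim_{\mathbb{Q}}K_Z+\Psi$ for a klt pair $(Z,\Psi)$ by \cite[Corollary 3.2]{fg-bundle}, with $K_Z+\Psi$ big; by the assumed minimal model theory in dimension $\dim Z\le d_0$ — more precisely by \cite{bchm} in the big klt case — $(Z,\Psi)$ has a good minimal model, so $D-eE$ is semi-ample. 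Combining the nefness of $K_X+\Delta$ (equivalently of $D$, after possibly replacing $Z$) with the semi-ampleness of $D-eE$ and positivity of $E$ relative to the contraction, I would run the same endgame as in Step \ref{step5incase1} of Proposition \ref{propcase1}: letting $g:Z\to W$ be the contraction defined by $D-eE$ for suitable small parameters, either $\dim W=\dim Z$ (so $D-eE$ is big and nef, hence semi-ample, giving $K_X+\Delta$ semi-ample directly since $D$ differs from $D-eE$ by a semi-ample-plus-$\mathbb{Q}$-trivial correction), or $\dim W<\dim Z$ and $K_X+\Delta\sim_{\mathbb{Q},W}0$, whereupon Theorem \ref{thmmain} for $d_0-1$ applied to $(X,\Delta)\to W$ shows $(X,\Delta)$ has a good minimal model; since $K_X+\Delta$ is already nef, that good minimal model is $(X,\Delta)$ itself and $K_X+\Delta$ is semi-ample.

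The main obstacle I anticipate is handling the non-klt locus of $(X,\Delta)$ carefully: the canonical bundle formula of \cite{fg-bundle} requires that every lc center of the pair dominates $Z$, so all the reductions must be arranged so that after subtracting a small multiple of $\Delta''$ this condition genuinely holds, and I must check that running the various MMPs on $Y$ does not create new lc centers that fail to dominate the base — this is exactly where the special-termination bookkeeping of Lemma \ref{lemlogresol} and Corollary \ref{cordltblowup} is needed. A secondary subtlety is ensuring the contraction $g:Z\to W$ is independent of the small parameters (the curve-contraction argument of Step \ref{step5incase1}) so that the relative triviality $K_X+\Delta\sim_{\mathbb{Q},W}0$ is genuine rather than merely numerical; this uses the cone theorem \cite[Theorem 4.5.2]{fujino-book}. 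Once these points are in place the conclusion is immediate from the stated hypotheses.
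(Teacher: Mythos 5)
Your route diverges from the paper's and it has a genuine gap at the decisive point. The paper's proof is short and of a completely different nature: take a dlt blow-up, observe that $K_{X}+\Delta$ is nef and \emph{abundant} because it is the pullback of a nef and big divisor, check that the restriction of $K_{X}+\Delta$ to every lc center $T$ is nef and abundant (using Theorem \ref{thmmain} for $d_{0}-1$ when the image $Z_{T}$ has smaller dimension, and the pullback-of-nef-and-big observation when $T$ dominates $Z$), and then invoke the criterion that a nef and log abundant log canonical divisor of a dlt pair is semi-ample, \cite[Theorem 4.12]{fujino-gongyo}. That last criterion is the key ingredient, and it is entirely absent from your proposal.

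The concrete failure in your argument is the endgame. First, the existence of a good minimal model of $(Z,\Psi)$ does not make $D-eE\sim_{\mathbb{Q}}K_{Z}+\Psi$ semi-ample on $Z$ itself; you would have to run the MMP, lift it via Lemma \ref{lemliftmmp}, and control lc-ness along the way, which is the content of Propositions \ref{propcase1} and \ref{propcase3}, not of this lemma. Second, and more seriously: since $D$ is big, $D-eE$ is big for small $e$, so the contraction $g:Z\to W$ it induces is birational and you are always in the branch $\dim W=\dim Z$. There the dichotomy you borrow from Step \ref{step5incase1} gives you nothing: the inductive appeal to Theorem \ref{thmmain} for $d_{0}-1$ requires $\dim W<\dim Z$, and your fallback claim that $K_{X}+\Delta$ is semi-ample because ``$D$ differs from $D-eE$ by a semi-ample-plus-$\mathbb{Q}$-trivial correction'' is unjustified --- $D=(D-eE)+eE$ with $eE$ merely effective, neither semi-ample nor $\mathbb{Q}$-trivial, and nef plus effective is not semi-ample. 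What remains to be shown at that point is precisely the semi-ampleness of a nef and big $D$ pulled back to an lc pair whose lc centers need not dominate $Z$, i.e., the statement of the lemma itself; indeed the paper's Proposition \ref{propcase3} runs exactly the machinery you describe and then \emph{cites Lemma \ref{lemabundance}} to finish, so your argument is circular in the case that matters. To repair it you need either the log abundance criterion of \cite{fujino-gongyo} as in the paper, or some independent basepoint-freeness argument handling the non-klt locus.
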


\begin{proof}
By taking a dlt blow-up, we can assume $(X,\Delta)$ is $\mathbb{Q}$-factorial dlt. 
We show $K_{X}+\Delta$ is nef and log abundant. 
Indeed, $K_{X}+\Delta$ is nef and abundant since it is $\mathbb{Q}$-linearly equivalent to the pullback of a nef and  big $\mathbb{Q}$-divisor on $Z$.
Let $T$ be any lc center of $(X,\Delta)$ and define $\Delta_{T}$ by the adjunction $(K_{X}+\Delta)|_{T}=K_{T}+\Delta_{T}$. 
Then $(T,\Delta_{T})$ is dlt and $K_{T}+\Delta_{T}$ is nef. 
Let $Z_{T}$ be the normalization of the image of $T$ on $Z$. 
If ${\rm dim}\,Z_{T}<{\rm dim}\,Z$, by Theorem \ref{thmmain} for $d_{0}-1$, we see that $K_{T}+\Delta_{T}$ is semi-ample. 
In particular it is nef and abundant. 
On the other hand, if ${\rm dim}\,Z_{T}={\rm dim}\,Z$, it is easy to see that $K_{T}+\Delta_{T}$ is $\mathbb{Q}$-linearly equivalent to the pullback of nef and big $\mathbb{Q}$-divisor on $Z_{T}$. 
Therefore it is nef and abundant. 
Thus we see that $K_{X}+\Delta$ is nef and log abundant. 
Then $K_{X}+\Delta$ is semi-ample by \cite[Theorem 4.12]{fujino-gongyo}. 
So we are done. 
\end{proof}

\section{Proof of other results} \label{secproofcorollary}

In this section we prove Theorem \ref{thmtrueiitaka} and Corollary \ref{corlcring}. 

\begin{theo}\label{thmfibration}
Let $\pi:X \to Z$ be a projective surjective morphism of normal projective varieties and let $(X,\Delta)$ be a log canonical pair such that $\Delta$ is a $\mathbb{Q}$-divisor. 
Suppose that $K_{X}+\Delta \sim_{\mathbb{Q}}\pi^{*}D$ for a $\mathbb{Q}$-Cartier $\mathbb{Q}$-divisor $D$ on $Z$.  

If ${\rm dim}\,Z \leq 3$ or ${\rm dim}\,Z = 4$ and $D$ is big, then $(X,\Delta)$ has a good minimal model or a Mori fiber space. 
\end{theo}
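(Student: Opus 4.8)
The plan is to deduce Theorem~\ref{thmfibration} from Theorem~\ref{thmmain} and from Proposition~\ref{propcase3}, which is the part of the proof of Theorem~\ref{thmmain} handling the ``$D$ big'' case. First observe that $K_{X}+\Delta\sim_{\mathbb{Q}}\pi^{*}D$ forces $K_{X}+\Delta\sim_{\mathbb{Q},\,Z}0$, so $\pi\colon(X,\Delta)\to Z$ is of the shape required by both results; moreover if $D$ is pseudo-effective---in particular if $D$ is big---then $K_{X}+\Delta$ is pseudo-effective by \cite[II~5.6~Lemma]{nakayama-zariski-decom}.

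When ${\rm dim}\,Z\leq 3$ I would simply apply Theorem~\ref{thmmain} with $d_{0}=3$. The hypothesis of that theorem in this range, namely the existence of a good minimal model or a Mori fiber space for every projective Kawamata log terminal pair of dimension at most three with a boundary $\mathbb{Q}$-divisor, is known, being a consequence of the minimal model program and the abundance theorem for threefolds. Hence Theorem~\ref{thmmain} applies verbatim and yields the conclusion.

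When ${\rm dim}\,Z=4$ and $D$ is big, $K_{X}+\Delta$ is pseudo-effective and I aim to produce a good minimal model. The key observation is that $D$ being big places us in Case~\ref{case3} of the proof of Theorem~\ref{thmmain}, which is settled by Proposition~\ref{propcase3}; and Proposition~\ref{propcase3} uses only Theorem~\ref{thmmain} for $d_{0}-1$ together with the minimal model theory for Kawamata log terminal pairs of dimension $\leq d_{0}-1$, both of which are available for $d_{0}=4$ by the previous paragraph. So I would first run the reduction of Lemma~\ref{lemreduction} to arrange that $\pi\colon(X,\Delta)\to Z$ satisfies all of its conditions, and then invoke Proposition~\ref{propcase3} (which internally disposes of the subcase $E=0$ via Proposition~\ref{propbigkltcase}). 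Two points must be checked in carrying this out. First, bigness of $D$ survives the reduction: at every step $Z$ is replaced either by a normal projective variety birational or finite over it, or (in the reduction to condition~(iv) of Lemma~\ref{lemreduction}) by the output of finitely many steps of the $D$-MMP with scaling of an ample divisor, and in each case the transform of $D$ on the new base remains big, since bigness of a $\mathbb{Q}$-divisor is preserved under finite pullbacks and under birational contractions (the latter by Kodaira's lemma: a big divisor is $\sim_{\mathbb{Q}}$ ample $+$ effective, and both summands push forward appropriately). Second, the reduction of Lemma~\ref{lemreduction} is itself available here although the absolute minimal model theory for Kawamata log terminal fourfolds is not: every log MMP invoked in its proof is either relative over a base birational to $Z$---where termination comes from the very-exceptional divisor arguments of \cite{birkar-flip}, exactly as in the proof of Proposition~\ref{propkltcase}---or an MMP for a Kawamata log terminal pair with big boundary on $Z$, which terminates by \cite[Corollary~1.4.2]{bchm}. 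Granting these, Proposition~\ref{propcase3} produces a good minimal model of $(X,\Delta)$, which in particular is a good minimal model or a Mori fiber space.

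The main obstacle is precisely this last bookkeeping in the four-dimensional case: one has to confirm that the reduction machinery of Lemma~\ref{lemreduction}, which was written to prepare the proof of Theorem~\ref{thmmain} under the full hypothesis of that theorem, still functions under the weaker hypotheses assumed in Proposition~\ref{propcase3}, and that it does not spoil the bigness of $D$ on which Proposition~\ref{propcase3} crucially depends. Once that is verified the proof is immediate; no geometric input beyond Theorem~\ref{thmmain}, Proposition~\ref{propcase3}, and the classical threefold minimal model and abundance theorems is required.
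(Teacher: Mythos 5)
Your proposal is correct and follows essentially the same route as the paper: the paper's proof likewise reduces via (the arguments of) Lemma~\ref{lemreduction}, notes that bigness of $D$ is preserved, and then cites Theorem~\ref{thmmain} for ${\rm dim}\,Z\leq 3$ and Proposition~\ref{propcase3} for ${\rm dim}\,Z=4$ with $D$ big, using that the log MMP and abundance hold in dimension three. Your write-up merely spells out the bookkeeping (why the reduction only needs relative and big-boundary MMPs, and why bigness survives) that the paper compresses into two sentences.
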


\begin{proof}
By the same arguments as in the proof of Lemma \ref{lemreduction}, we can assume that $\pi:(X,\Delta)\to Z$ satisfies all the conditions of Lemma \ref{lemreduction}. 
Note that bigness of $D$ still holds after the process. 
Since the log MMP and the abundance conjecture hold for all log canonical threefolds, the theorem follows from Theorem \ref{thmmain} and Proposition \ref{propcase3}. 
\end{proof}

\begin{proof}[Proof of Theorem \ref{thmtrueiitaka}]
Let $f:X\dashrightarrow W$ be the Iitaka fibration. 
Taking an appropriate resolution of $X$ if necessary, we may in particular assume that $X$ is $\mathbb{Q}$-factorial, $(X,0)$ is klt and $f$ is a morphism. 
By \cite[Theorem 2]{kawamata} (see also \cite[Theorem 0.3]{ak}), we can in particular assume that $W$ is smooth and all fibers have the same dimension (cf.~\cite[Theorem 2.1]{haconxu-lcc}). 
By construction, there is an effective $\mathbb{Q}$-divisor $E$ such that $K_{X}+\Delta \sim_{\mathbb{Q}}E$. 
Then we can write $E=E^{h}+E^{v}$, where every component of $E^{h}$ dominates $W$ and $E^{v}$ is vertical. 
Since all fibers of $f$ have the same dimension, the image of any component of $E^{v}$ on $W$ is a divisor. 
Then we can consider 
$$\mu_{B}={\rm sup}\{\mu \,|\, E^{v}-\mu f^{*}B {\rm \,\; is \,\; effective}\}$$
for any prime divisor $B$ on $W$. 
Then it is easy to see that $\mu_{B}$ is a rational number for any $B$ and there are only finitely many divisors $B$ such that $\mu_{B}>0$. 
Set 
$$B' =\sum_{B}\mu_{B}B \qquad {\rm and} \qquad E'=E^{v}-f^{*}B'.$$ 
Then $K_{X}+\Delta \sim_{\mathbb{Q},\,W}E^{h}+E'$ and $E'$ is effective. 
Moreover we see that $E'$ is very exceptional over $W$ (cf.~\cite[Definition 3.1]{birkar-flip}). 

We run the $(K_{X}+\Delta)$-MMP over $W$ with scaling of an ample divisor 
$$X=X_{0} \dashrightarrow X_{1}\dashrightarrow \cdots \dashrightarrow X_{i}\dashrightarrow \cdots.$$ 
Let $f_{i}:X_{i}\to W$ be the induced morphism and let $F_{i}$ be the general fiber of $f_{i}$. 
Recall that $(F,\Delta_{F})$ has a good minimal model by the hypothesis. 
Since $\kappa \bigl(F_{i},(K_{X_{i}}+\Delta_{X_{i}})|_{F_{i}}\bigr)=0$, we have 
$$(K_{X_{i}}+\Delta_{X_{i}})|_{F_{i}} \sim_{\mathbb{Q}} E_{X_{i}}|_{F_{i}} \sim_{\mathbb{Q}} (E^{h}_{X_{i}}+E'_{X_{i}})|_{F_{i}}  \sim_{\mathbb{Q}} 0$$
for any $i \gg 0$. 
Therefore $E^{h}_{X_{i}}+E'_{X_{i}}$ is vertical and thus we have $K_{X_{i}}+\Delta_{X_{i}} \sim_{\mathbb{Q},\,W}E'_{X_{i}}$. 
We note that $E'_{X_{i}}$ is very exceptional over $W$ because the $(K_{X}+\Delta)$-MMP occurs only in ${\rm Supp}\,(E^{h}+E')$. 
Moreover $K_{X_{i}}+\Delta_{X_{i}} \sim_{\mathbb{Q},\,W}E'_{X_{i}}$ is the limit of movable divisors over $W$ for any $i \gg 0$. 
Then $E'_{X_{i}}=0$ by \cite[Lemma 3.3]{birkar-flip}. 
Therefore $K_{X_{i}}+\Delta_{X_{i}} \sim_{\mathbb{Q},\,W}0$ for some $i$. 
Let $D$ be a $\mathbb{Q}$-divisor on $W$ such that $K_{X_{i}}+\Delta_{X_{i}} \sim_{\mathbb{Q}}f_{i}^{*}D$. 
Then $D$ is big since $\kappa(W,D)=\kappa(X_{i},K_{X_{i}}+\Delta_{X_{i}})={\rm dim}\,W$. 

If $(X,\Delta)$ is klt, then $(X_{i},\Delta_{X_{i}})$ is also klt and  $(X_{i},\Delta_{X_{i}})$ has a good minimal model by Proposition \ref{propbigkltcase}. 
Note that $W$ is in particular $\mathbb{Q}$-factorial from our assumption. 
On the other hand, if $\kappa(X,K_{X}+\Delta)\leq 4$, then ${\rm dim}\,W\leq 4$ and therefore $(X_{i},\Delta_{X_{i}})$ has a good minimal model by Theorem \ref{thmfibration}. 

Therefore we see that $(X,\Delta)$ has a good minimal model. 
\end{proof}

\begin{proof}[Proof of Corollary \ref{corlcring}]
Since $(X,\Delta)$ is not of log general type, we have $\kappa(X,K_{X}+\Delta)\leq 4$. 
We take the Iitaka fibration $f:X\dashrightarrow W$, and taking a resolution we can assume that $f$ is a morphism. 
We can assume $\kappa(X,K_{X}+\Delta)>0$ because otherwise the statement is obvious. 

Suppose that $\kappa(X,K_{X}+\Delta)=1$. 
In general, there is a $\mathbb{Q}$-divisor $B$ on $W$ and a positive integer $m$ such that 
$\mathcal{R}\bigl(X,m(K_{X}+\Delta)\bigr)\simeq \mathcal{R}(W,mB)$ (cf.~\cite{fujino-mori}). 
Since $W$ is a smooth curve, $\mathcal{R}(W,mB)$ is finitely generated. 
Then $\mathcal{R}\bigl(X,m(K_{X}+\Delta)\bigr)$ is finitely generated, and thus $\mathcal{R}(X,K_{X}+\Delta)$ is finitely generated. 

Suppose that $\kappa(X,K_{X}+\Delta) \geq2$. 
Let $F$ be the general fiber of the Iitaka fibration and let $(F,\Delta_{F})$ be the restriction of $(X,\Delta)$. 
Then $(F,\Delta_{F})$ is lc and ${\rm dim}\,F \leq3$ by construction, and thus $(F,\Delta_{F})$ has a good minimal model. 
Then $(X,\Delta)$ has a good minimal model by Theorem \ref{thmtrueiitaka}. 
Thus $\mathcal{R}(X,K_{X}+\Delta)$ is finitely generated. 
\end{proof}


\end{document}